\def\atemp{dvipspdf}\ifx\atemp\usewhat
\def\atemp{dvipdfmx}\ifx\atemp\usewhat
\def\atemp{pdflatex}\ifx\atemp\usewhat
\newtheorem{theorem}{\rm\bf Theorem}[section]
\newtheorem{proposition}[theorem]{\rm\bf Proposition}
\newtheorem{lemma}[theorem]{\rm\bf Lemma}
\newtheorem{corollary}[theorem]{\rm\bf Corollary}
\newtheorem*{theorem 1}{\rm\bf Proposition 1}
\newtheorem*{theorem 2}{\rm\bf Proposition 2}
\theoremstyle{definition}
\newtheorem{definition}[theorem]{\rm\bf Definition}
\theoremstyle{remark}
\newtheorem{remark}[theorem]{\rm\bf Remark}
\newtheorem{conjecture}[theorem]{\rm\bf Conjecture}
\newcommand{\TS}{\mathcal{T}(S)}
\newcommand{\GMT}{\partial_{\mathrm{GM}}\mathcal{T}(S)}
\newcommand{\MF}{\mathcal{MF}}
\newcommand{\PMF}{\mathcal{PMF}}
\newcommand{\T}{\mathcal{T}(S)}
\newcommand{\Ext}{\mathrm{Ext}}
\begin{document}
	\title[Optimal geodesics]{Optimal geodesics for boundary points of the Gardiner-Masur compactification}
	
	\author{Xiaoke Lou}
	\author{Weixu Su}
	\author{Dong Tan}
	
	\address{Xiaoke Lou: School of Mathematical Sciences, Fudan University, Shanghai 200433, China}
	\email{xklou20@fudan.edu.cn}
	
	\address{Weixu Su: School of Mathematics, Sun Yat-sen University, Guangzhou 510275, China}
	\email{suwx9@mail.sysu.edu.cn}
	
	\address{Dong Tan: School of Mathematics and Information Science, Guangxi University, Guangxi, 530004, China.}
	\email{duzuizhe2013@foxmail.com}

	\begin{abstract}
		The Gardiner-Masur compactification of Teichm\"uller space is homeomorphic to the horofunction compactification of the Teichm\"uller metric.
		Let $\xi$ and $\eta$ be a pair of boundary points in the Gardiner-Masur compactification that fill up the surface. 
		We show that there is a unique  Teichm\"uller geodesic which is optimal for the horofunctions corresponding to $\xi$ and $\eta$.
		In particular, when $\xi$ and $\eta$ are Busemann points that fill up the surface, the geodesic converges to $\xi$ in forward  direction and to $\eta$ in backward direction.
		As an application, we show that if $\mathbf{G}_n$ is a sequence of Teichm\"uller geodesics passing through $X_n$ and $Y_n$ such that 
		$X_n \to \xi$ and $Y_n \to \eta$, 
		then  $\mathbf{G}_n$ converges to a unique Teichm\"uller geodesic.
		
	\end{abstract}
	\maketitle
	
	\section{Introduction}
	
	\noindent
	
	\subsection{Background}	
	Let $S=S_{g,n}$ be an oriented closed surface of genus $g$ with $n$ punctures, where $3g-3+n \geq 2$.  
	Let $\TS$ be the Teichm\"uller space of $S$.
	It is naturally a complex manifold  of dimension $3g-3+n$. 
	
	We shall endow $\TS$ with the Teichm\"uller metric, which is equal to the Kobayashi metric  \cite{Royden}. 
	It is known that the Teichm\"uller space of torus is isometric to the hyperbolic plane. In higher dimensions,  the situation is very different:
	\begin{enumerate}
		\item Royden \cite{Royden} proved that the 
		Teichm\"uller space endowed with the Teichm\"uller metric is not homogeneous and there is no point about which it is symmetric. Moreover, Royden's theorem  (extended by Earle-Kra \cite{EK}) about automorphisms of Teichm\"uller space states that any isometry of the Teichm\"uller metric is induced by an element of the mapping class group. 
		
		\item Masur \cite{Masur1975On} showed that the Teichm\"uller metric does not have negative curvature in the sense of Busemann; 
		Masur and Wolf \cite{Masur1994Teichmuller} showed that the Teichm\"uller metric is not Gromov hyperbolic. See also \cite{Minsky}.  
		
		\item The Teichm\"uller space is biholomorphic to a bounded pseudoconvex domain \cite{BE}.  Recently, Markovic \cite{markovic} showed that
		the Teichm\"uller space  is not
		biholomorphic to a bounded convex domain in $\mathbb{C}^{3g-3+n}$.
	\end{enumerate}

	Nevertheless, there is a strong analogy between the geometry of Teichm\"uller space and that of a hyperbolic space
	\cite{ABEM, Bers, MM1999, Rafi}.
	The aim of this paper is to explore some  properties of the horofunction boundary of $\TS$, inspired by analogies with
	negatively curved spaces.

	\subsection{Gardiner-Masur boundary}	
	Horofunction compactification and the corresponding boundary of a proper metric space were introduced by Gromov \cite{Gromov81}. These notions have been recently studied in several research
	areas, including geometry of bounded convex domains with the Hilbert metric, Teichm\"uller spaces, and random walks on groups of isometries \cite{Walsh, Gouezel}.

	Liu-Su \cite{liu2010horofunction} showed that the horofunction compactification of $\mathcal{T}(S)$ equipped with the Teichm\"uller metric is homeomorphic to the Gardiner-Masur compactification. 	
	
	Unlike the Thurston boundary, which is homeomorphic to a sphere of dimension $6g-7+2n$, 
	the topology of the Gardiner-Masur boundary is more complicated.  Note that every Teichm\"uller geodesic ray converges to a unique limit on
	the Gardiner-Masur boundary. Limits of geodesic rays are called \textit{Busemann points}.  
	
	Busemann points in  the Gardiner-Masur boundary are completely characterized by Walsh \cite{Walsh2019The}.
	There exist non-Busemann points in the Gardiner-Masur boundary \cite{miyachi2014extremal}.
	Recently, Azemar \cite{Azemar2021qualitative} showed that
	Busemann points are not dense in the Gardiner-Masur boundary.

	\bigskip

	In a proper $\delta$-hyperbolic metric space,  
	any interior point and any
	boundary point  can be connected by a geodesic ray. 
	Moreover, any two distinct boundary points
	can be connected by a (bi-infinite) geodesic.
	See  \cite{KM}.
	
	By identifying the Gardiner-Masur compactification with the horofunction compactification of $\TS$,
	each  boundary point
	is corresponding to a horofunction. 
	Azemar \cite{Azemar2021qualitative} showed that, 
	for any given horofunction $\Psi$ and 
	any given $X\in \TS$, there is a unique \textit{optimal geodesic} for 
	$\xi$ passing through $X$. 
	By definition, a  geodesic $\mathbf{G}(t)$ is  optimal for $\Psi$ if
	for every $t\in \mathbb{R}$, 
	$$
	\Psi(\mathbf{G}(t))-\Psi(\mathbf{G}(0))=-t. 
	$$
	Equivalently, the optimal geodsic is a  gradient line along which the horofunction decreases fastest. 
	
	When $\Psi$ is a Busemann function (horofunction function corresponding to Busemann point),   $\mathbf{G}(t)$  is exactly the geodesic that converges to $\Psi$ in forward direction.

	\subsection{Main results}
	
	The main result in this paper is to determine when
	two distinct boundary points of $\TS$ are ``connected" by a Teichm\"uller geodesic.

	Let $\mathcal{MF}$  be the space of measured foliations on $S$.
	Every point $\xi$ in the Gardiner-Masur boundary is
	determined by a nonnegative homogeneous function
	$i(\xi, \cdot) \colon  \mathcal{MF}\to \mathbb{R}_{\geq 0}.$
	The representation is unique up to multiplication by a positive constant \cite[Theorem 1.1]{Miyachi2008Teichm}.


	The following definition is a generalization  of a pair of measured foliations that fill up the surface.
	
	\begin{definition}	Let $\xi$ and $\eta$ be a pair of points in the Gardiner-Masur boundary. We say that $\xi$ and $\eta$
		{fill up the surface} if
		$i(\xi, \mu) + i(\eta,\mu) > 0$
		for all $\mu \in \mathcal{MF}\setminus\{0\}.$
	\end{definition}

	Let $\xi$ and $\eta $ be  a pair of points in the Gardiner-Masur boundary.
	A Teichm\"uller geodesic $\mathbf{G}(t)$ is said to be\textit{ optimal} for $\xi$ and 
	$\eta$ if 
	$\mathbf{G}(t)$ is an optimal geodesic for $\xi$ and
	the reverse geodesic $\widetilde{\mathbf{G}}(t)=\mathbf{G}(-t)$ is an optimal geodesic for $\eta$.
	We prove
	
	\begin{theorem}\label{thm:horofunction}
		Let $\xi$ and $\eta $ be  a pair of points in the Gardiner-Masur boundary that fill up the surface. There is a unique Teichm\"uller geodesic that is optimal for $\xi$ and $\eta$.
	\end{theorem}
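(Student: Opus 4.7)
The plan is to construct the optimal geodesic by minimizing the function $F \colon \T \to \mathbb{R}$ defined by $F(X) = \Psi_\xi(X) + \Psi_\eta(X)$, where $\Psi_\xi, \Psi_\eta$ are horofunctions corresponding to $\xi, \eta$. The key observation is that if $\mathbf{G}$ is a Teichm\"uller geodesic optimal for $\xi$ forward and $\eta$ backward, then $\Psi_\xi(\mathbf{G}(t)) = \Psi_\xi(\mathbf{G}(0)) - t$ and $\Psi_\eta(\mathbf{G}(t)) = \Psi_\eta(\mathbf{G}(0)) + t$, so $F$ is constant along $\mathbf{G}$; hence any such $\mathbf{G}$ should lie inside the minimum set of $F$. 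I will use the fill-up hypothesis to force $F$ to attain its infimum, then extract the desired geodesic from a minimizer via Azemar's theorem.

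\medskip

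\noindent \textbf{Existence of a minimizer.} Using Miyachi's representation $\Psi_\xi(X) = \log \sup_{\alpha \in \MF} i(\xi, \alpha)/\sqrt{\Ext_X(\alpha)}$ (up to additive normalization, and similarly for $\eta$) together with the characterization of Gardiner-Masur convergence --- namely $e^{-\dT(X_0, X_n)} \sqrt{\Ext_{X_n}(\alpha)} \to i(\zeta, \alpha)$ whenever $X_n \to \zeta \in \GMT$ --- I would show that $F$ attains its infimum on $\T$. The fill-up condition supplies, for each $\zeta \in \GMT$, a measured foliation $\mu \in \MF$ with $i(\xi, \mu) + i(\eta, \mu) > 0$; plugging $\mu$ into the sup-formulas for $\Psi_\xi$ and $\Psi_\eta$ and splitting cases based on which of $i(\xi, \mu)$ or $i(\eta, \mu)$ is positive yields lower-bound estimates that, together with a shift-compactness argument along Teichm\"uller geodesics, produce a minimizer $X_0 \in \T$.

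\medskip

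\noindent \textbf{Extracting the geodesic.} Let $\mathbf{G}$ be Azemar's unique optimal geodesic for $\xi$ through $X_0$, so $\Psi_\xi(\mathbf{G}(t)) = \Psi_\xi(X_0) - t$ for all $t \in \mathbb{R}$. For $t \geq 0$, the minimality of $X_0$ combined with the $1$-Lipschitz bound $\Psi_\eta(\mathbf{G}(t)) \leq \Psi_\eta(X_0) + t$ forces $\Psi_\eta(\mathbf{G}(t)) = \Psi_\eta(X_0) + t$, so every $\mathbf{G}(t)$ with $t \geq 0$ is itself a minimizer of $F$. To extend this to $t < 0$, let $t_\ast := \inf\{t : \mathbf{G}(t) \in F^{-1}(\inf F)\}$; if $t_\ast > -\infty$, apply Azemar's theorem to $\eta$ at $X_{t_\ast} := \mathbf{G}(t_\ast)$. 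The direction at $X_{t_\ast}$ in which $\Psi_\eta$ increases at unit rate is unique, by Azemar's uniqueness of the optimal $\eta$-geodesic combined with the fact that the reverse of a Teichm\"uller geodesic is itself a Teichm\"uller geodesic; since this unique direction is realized by the forward tangent of $\mathbf{G}$ at $X_{t_\ast}$, the reverse of $\mathbf{G}$ through $X_{t_\ast}$ coincides with the optimal $\eta$-geodesic $\mathbf{G}_\eta^{X_{t_\ast}}$. Applying the minimization argument to $\mathbf{G}_\eta^{X_{t_\ast}}$ then places points of $\mathbf{G}$ with $t < t_\ast$ in $F^{-1}(\inf F)$, contradicting the choice of $t_\ast$ unless $t_\ast = -\infty$. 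Hence $\mathbf{G}$ is entirely contained in $F^{-1}(\inf F)$, making it optimal for both $\xi$ and $\eta$. Uniqueness follows from Azemar's uniqueness of the optimal $\xi$-geodesic through any point: any second geodesic optimal for both $\xi, \eta$ is, in particular, the unique optimal $\xi$-geodesic through each of its points, and intersects $F^{-1}(\inf F)$.

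\medskip

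\noindent \textbf{Main obstacles.} The hardest step is expected to be the attainment of the infimum: $F$ is not proper in the usual sense, since its minimum set is an entire bi-infinite geodesic, so a direct properness argument fails and one must argue existence through a careful combination of the fill-up hypothesis, Miyachi's horofunction formula, and a shift-compactness argument along Teichm\"uller geodesics. A secondary technical step is the tangent-direction uniqueness used in the "leftmost $t_\ast$" argument, which requires a one-sided regularity property of $\Psi_\eta$ along Teichm\"uller geodesics at minimizers of $F$ in order to promote Azemar's uniqueness of the forward optimal $\eta$-geodesic to uniqueness of the direction in which $\Psi_\eta$ grows at unit rate.
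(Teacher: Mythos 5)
Your variational framing (minimize $F=\Psi_\xi+\Psi_\eta$ and show the minimum set is a single bi-infinite geodesic) is a genuinely different route from the paper, which instead works with tangency of horospheres: it finds a point $X_0$ where a level set of $\Psi_\xi$ is tangent to a level set of $\Psi_\eta$, and identifies the two optimal geodesics through $X_0$ by comparing kernels of differentials, using the sandwich $\Psi_{F}\leq \Psi_\xi\leq B_\xi$, the $C^1$-smoothness of Busemann functions, and Lemma 3.5 of \cite{Su2018Horospheres}. The problem is that at the two places where your sketch meets the real difficulty, the argument as written does not close. First, the step at $t_\ast$ is unsupported: Azemar's theorem gives uniqueness of the bi-infinite geodesic along which $\Psi_\eta$ \emph{decreases} at unit rate for all $t\in\mathbb{R}$; it does not say that a geodesic ray from $X_{t_\ast}$ along which $\Psi_\eta$ \emph{increases} at unit rate is unique, nor that such a ray is the backward half of the optimal $\eta$-geodesic. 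Knowing the behavior of $\Psi_\eta$ only on the forward ray $\{\mathbf{G}(t):t\geq t_\ast\}$ gives you no bi-infinite geodesic to which Azemar's uniqueness applies, so you cannot conclude that the reverse of $\mathbf{G}$ is $\mathbf{G}_\eta^{X_{t_\ast}}$. You flag this yourself as a ``one-sided regularity property,'' but it is not a secondary technicality --- it is exactly the heart of the theorem, and the paper supplies it by entirely different means (differentiability of $B_\eta$ via Earle's formula, the inclusion of sub-level sets coming from $\Psi_{F}\leq\Psi_\eta\leq B_\eta$, and the rigidity statement that $V(\Psi_{F_v})=V(\Psi_{F})$ forces $F$ to be projectively $F_v$ or $F_h$). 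Since horofunctions $\Psi_\eta$ are not known to be $C^1$ (the paper remarks this explicitly), some substitute of this kind is unavoidable. Second, the existence of a minimizer of $F$ is only gestured at; since, as you note, the putative minimum set is an unbounded geodesic, minimizing sequences need not stay in a compact set, and ``shift-compactness'' is not an argument. One also needs $\inf F>-\infty$, which already uses the filling hypothesis quantitatively (in the paper this is absorbed into Lemma \ref{lem:tangent} via \cite[Lemma 3.10]{Su2018Horospheres} and the comparisons $\Psi_\xi\asymp\Psi_\mu$, $\Psi_\eta\asymp\Psi_\nu$).

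The uniqueness paragraph also has a gap. If $\mathbf{G}'$ is a second geodesic optimal for $\xi$ and $\eta$, then $F$ is constant along $\mathbf{G}'$, but nothing you have proved places $\mathbf{G}'$ inside $F^{-1}(\inf F)$; and even if it were there, ``each point of $\mathbf{G}'$ lies on the unique optimal $\xi$-geodesic through that point'' is compatible with $\mathbf{G}$ and $\mathbf{G}'$ being disjoint, each serving as the optimal $\xi$-geodesic through its own points. To exclude this you need something like the paper's Lemma \ref{lem:seperate}: at every point $X$ of an optimal geodesic, $\mathrm{L}(\Psi_\xi,X)$ and $\mathrm{L}(\Psi_\eta,X)$ are tangent and meet in the single point $X$ (proved by separating the horoballs with $\mathrm{L}(\Psi_{\mathcal{F}_v},X)$ and $\mathrm{L}(\Psi_{\mathcal{F}_h},X)$). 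That single-point tangency is what pins the geodesic down; without it, uniqueness does not follow. In short, the variational skeleton is attractive and could likely be made to work, but only after importing essentially the same ingredients the paper uses (the two-sided bounds on $\Psi_\xi$ by $\Psi_F$ and $B_\xi$, the $C^1$ Busemann functions, and the horosphere tangency/uniqueness results from \cite{Su2018Horospheres}), so as it stands the proposal has genuine gaps at both existence and uniqueness.
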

	
	Theorem \ref{thm:horofunction} does not indicate that the geodesic converges to $\xi$ or $\eta$, in case that $\xi$ and $\eta$ are not Busemann points. 
	
	A special case of Theorem \ref{thm:horofunction} is
	
	\begin{theorem}\label{thm:Busemann} Let $\xi,\eta$ be a pair of Busemann points
		in the Gardiner-Masur boundary that fill up the surface. There exists a unique Teichm\"uller geodesic $\mathbf{G}(t)$ such that
		$$
		\lim _{t \rightarrow +\infty} \mathbf{G}(t)=\xi \quad \text { and } \quad \lim _{t \rightarrow-\infty} \mathbf{G}(t)=\eta.
		$$	
		
	\end{theorem}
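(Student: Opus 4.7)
The plan is to deduce Theorem~\ref{thm:Busemann} directly from Theorem~\ref{thm:horofunction} together with the characterization of optimal geodesics for Busemann functions recalled in the introduction. First I would apply Theorem~\ref{thm:horofunction} to the filling pair $(\xi,\eta)$ to produce a unique Teichm\"uller geodesic $\mathbf{G}(t)$ which is optimal for $\xi$ and $\eta$. Unpacking the definition, this says that $\mathbf{G}(t)$ is optimal for the horofunction $\Psi_\xi$ corresponding to $\xi$ and that the reverse geodesic $\widetilde{\mathbf{G}}(t)=\mathbf{G}(-t)$ is optimal for the horofunction $\Psi_\eta$ corresponding to $\eta$.

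Next I would invoke the observation, already recorded just after the definition of optimal geodesic, that when $\Psi_\xi$ is a Busemann function, its optimal geodesic through any basepoint is precisely the Teichm\"uller geodesic ray converging to $\xi$ in the Gardiner-Masur compactification. Applying this to each of $\xi$ and $\eta$ gives
\[
\lim_{t \to +\infty} \mathbf{G}(t) = \xi \quad\text{and}\quad \lim_{t \to -\infty} \mathbf{G}(t) = \lim_{t \to +\infty} \widetilde{\mathbf{G}}(t) = \eta,
\]
which is the desired convergence.

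For uniqueness, suppose $\mathbf{G}'(t)$ is a second Teichm\"uller geodesic with $\lim_{t \to +\infty}\mathbf{G}'(t) = \xi$ and $\lim_{t \to -\infty}\mathbf{G}'(t) = \eta$. A short check using the defining formula $\Psi_\xi(x) = \lim_{s\to\infty}\bigl(\dT(x,\gamma(s))-s\bigr)$ for the Busemann function of any geodesic ray $\gamma$ converging to $\xi$ shows that such a ray is automatically optimal for $\Psi_\xi$; applying this argument at both ends, $\mathbf{G}'$ is optimal for the pair $(\xi,\eta)$, and the uniqueness clause of Theorem~\ref{thm:horofunction} forces $\mathbf{G}' = \mathbf{G}$.

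I do not anticipate a serious obstacle, since Theorem~\ref{thm:horofunction} carries all the geometric content. The only point requiring attention is the two-sided equivalence between ``optimal for a Busemann function'' and ``forward convergent to the corresponding Busemann point'': one direction is asserted in the introduction, while the converse used for uniqueness is a direct computation from the definition of the Busemann function. The filling hypothesis on $(\xi,\eta)$ is used exclusively through the application of Theorem~\ref{thm:horofunction}.
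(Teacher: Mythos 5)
Your deduction is correct, but it is a genuinely different route from the one the paper takes. The paper proves Theorem \ref{thm:Busemann} directly and, as it states at the start of \S 4, independently of Theorem \ref{thm:horofunction}: it uses Walsh's explicit formula for Busemann points in terms of the ergodic decompositions of the vertical foliations, turns the convergence conditions into the linear system \eqref{equ:system} for the unknown weights, and solves it by showing the intersection matrix $T=NN^{*}$ is primitive (via the filling hypothesis and a Lenzhen--Masur approximation) so that Perron--Frobenius supplies a unique positive eigenvector; this yields not only existence and uniqueness but an explicit description of the quadratic differential defining $\mathbf{G}$. Your argument instead treats Theorem \ref{thm:Busemann} as the special case of Theorem \ref{thm:horofunction} that the introduction advertises, which is logically legitimate since the proof of Theorem \ref{thm:horofunction} in \S 5 nowhere uses Theorem \ref{thm:Busemann}. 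Two points deserve care. First, the converse you use for uniqueness is fine and for the full bi-infinite parameter range: since Teichm\"uller geodesics realize distance globally, $d(\mathbf{G}'(t),\mathbf{G}'(s))=s-t$ for all $t<s$, so the limit computation gives $\Psi_\xi(\mathbf{G}'(t))-\Psi_\xi(\mathbf{G}'(0))=-t$ for every $t\in\mathbb{R}$, not just along the forward ray (one also needs that $\mathbf{G}'(s)\to\xi$ in the Gardiner--Masur sense forces $\Psi_{\mathbf{G}'(s)}\to\Psi_\xi$, which is exactly Theorem \ref{thm:LS}). Second, the forward implication ``optimal for a Busemann function $\Rightarrow$ converges to the Busemann point'' is only asserted, not proved, in the introduction; to make it rigorous you should combine Azemar's uniqueness of the optimal geodesic through a point (Lemma \ref{lem-exist}) with Walsh's result (used in the paper's $C^1$ lemma) that from any basepoint there is a geodesic ray converging to a given Busemann point, that ray being optimal by the same limit computation. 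With these justifications supplied, your proof is complete; what it loses relative to the paper's \S 4 argument is the explicit Perron--Frobenius construction of the geodesic, and what it gains is brevity by leveraging the stronger general theorem.
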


		The assumption that $\xi$ and $\eta$
	fill up the surface is necessary both in Theorem \ref{thm:horofunction} and  
	Theorem \ref{thm:Busemann}.

	\begin{remark}
		Gardiner-Masur \cite[Theorem 5.1]{gardiner1991extremal} proved that,
		given any pair of measured foliations $\mu,\nu$ that fill up the surface, there is a unique
		Teichm\"uller geodesic determined by a holomorphic quadratic differetnial whose horizontal and vertical
		foliations are in the projective classes of $\mu$ and $\nu$. 
		However, the geodesic may not converges to the  $\mu$ or $\nu$ in general. 
	\end{remark}

	\subsection{Idea of the proofs}
	
	Theorem \ref{thm:Busemann} can be proved directly. 
	The set of Busemann points  in the  
	Gardiner-Masur boundary is completely characterized by Walsh \cite[Theorem 8]{Walsh2019The}.
	Given a Teichm\"uller geodesic ray, the limited Busemann point 
	can be  represented explicitly in terms of the decomposition of initial quadratic differential. 
	The problem of  existence (and uniqueness) of
	geodesic in  Theorem \ref{thm:Busemann} is reduced to the existence (and uniqueness) of {positive} eigenvector of a Perron-Frobenius matrix.

	In general,  there is no explicit representation for a  horofunction,
	thus the previous argument for Theorem \ref{thm:Busemann} doesn't work. 
	Instead, we consider level sets of {the horofunction} $\xi$, defined by 
	$$\mathrm{L}(\xi, s)=\{X\in \TS \ : \Psi_\xi(X)=s\},$$
	where $\Psi_\xi$ is the horofunction corresponding to $\xi$. 
	
	The filling condition of $\xi$ and $\eta$ implies that each $\mathrm{L}(\xi, s)$
	is tangent to some $\mathrm{L}(\eta, t)$. We observe that, if we take $X$ to be an intersection point of  $\mathrm{L}(\xi, s)$ and $\mathrm{L}(\eta, t)$, then the optimal geodesic 
	for $\xi$ passing through  $X$, by changing its direction,  is the  optimal geodesic for $\eta$ {passing through $X$}.
	
	The key proof is based on the following inequality.
	Associated with a Teichm\"uller geodesic, there
	are two natural horofunctions $\Psi_{{F}}$ and $B_\xi$, where $\Psi_{{F}}$ is corresponding to the vertical foliation of the quadratic differential determining the geodesic, and $B_\xi$ is the limited Busemann function.
	Then we have
	
	\begin{equation}
		\Psi_{F}\leq \Psi_\xi\leq B_{\xi}.
	\end{equation}
	
	Note that the level sets of $\Psi_{F}$ are corresponding to level sets of  the extremal length function  $\Ext_{F}$, which was studied in \cite{Su2018Horospheres}. In this paper, we will give a proof that the Busemann function $B_{\xi}$ is $C^1$. 
	Using the above inequality, we are able to show that for each $s$,  the level set $\mathrm{L}(\xi, s)$ intersects with $\mathrm{L}(\eta, t)$ at a unique point $X_s$,
	where $t$ depends on $s$. Consequently, the loci of intersection points
	$\{X_s\}$  define the optimal geodesic.

	\subsection{An application}

	We apply Theorem \ref{thm:horofunction} to show 
	\begin{theorem}\label{thm:convergence}
		Let  $\xi$ and $\eta$ be a pair of Gardiner-Masur boundary points that  fill up the surface. 
		For any sequences $X_n,Y_n\in\TS$  such that $X_n\to\xi$ and $Y_n\to\eta$, 
		let $\mathbf{G}_n$ be the Teichm\"uller geodesics passing through $X_n,Y_n\in\TS$.
		Then $\mathbf{G}_n$ converge to a unique geodesic as $n\to \infty$.  The limited geodesic is exactly  the optimal geodesic in  Theorem \ref{thm:horofunction}.
	\end{theorem}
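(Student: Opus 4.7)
The plan is to show that every subsequential limit of $\{\mathbf{G}_n\}$ is a Teichm\"uller geodesic optimal for the pair $(\xi, \eta)$, and then invoke the uniqueness from Theorem \ref{thm:horofunction} to conclude. Fix a basepoint $X_0 \in \TS$. For $n$ sufficiently large, parametrize $\mathbf{G}_n$ by arclength with $\mathbf{G}_n(a_n) = X_n$ and $\mathbf{G}_n(-b_n) = Y_n$ where $a_n, b_n > 0$, selecting the origin $Z_n := \mathbf{G}_n(0)$ by the condition $\Psi_{X_n}(Z_n) = \Psi_{Y_n}(Z_n)$. Solving the two linear equations $a_n + b_n = \dT(X_n, Y_n)$ and $a_n - b_n = \dT(X_0, X_n) - \dT(X_0, Y_n)$ shows this is well-defined, and the common value equals $\tfrac{1}{2}(\dT(X_n, Y_n) - \dT(X_0, X_n) - \dT(X_0, Y_n))$, i.e. the negative Gromov product of $X_n, Y_n$ at $X_0$.

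The crux of the proof is to establish that $\{Z_n\}$ is precompact in $\TS$. If a subsequence $Z_{n_k}$ were to escape every compact subset, it would accumulate at some $\zeta \in \GM$. Using Miyachi's extended-extremal-length representation of the horofunctions and the filling hypothesis $i(\xi, \mu) + i(\eta, \mu) > 0$ on $\MF \setminus \{0\}$, one derives a uniform lower bound on the common value $\Psi_{X_n}(Z_n) = \Psi_{Y_n}(Z_n)$ incompatible with any escape; equivalently, the filling hypothesis provides enough ``separation'' between $\xi$ and $\eta$ at infinity (the same mechanism that makes the level sets $\mathrm{L}(\xi, s)$ and $\mathrm{L}(\eta, t)$ tangent in the proof of Theorem \ref{thm:horofunction}) to pin down the midpoints in a bounded region. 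This precompactness step is the main obstacle.

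Given precompactness, the local compactness of the unit quadratic differential bundle $Q^1\TS$ over a compact base lets us extract a subsequence with $\mathbf{G}_{n_k} \to \mathbf{G}_*$ uniformly on compact subsets of $\mathbb{R}$, where $\mathbf{G}_*$ is a Teichm\"uller geodesic and $\mathbf{G}_*(0) = Z_* := \lim_k Z_{n_k}$. Since $Z_{n_k}$ is bounded while $X_{n_k} \to \xi$, we have $a_{n_k} = \dT(Z_{n_k}, X_{n_k}) \to \infty$ and similarly $b_{n_k} \to \infty$. For any fixed $t \in \mathbb{R}$ and $k$ large enough,
\[
\Psi_{X_{n_k}}(\mathbf{G}_{n_k}(t)) = \dT(\mathbf{G}_{n_k}(t), X_{n_k}) - \dT(X_0, X_{n_k}) = (a_{n_k} - t) - \dT(X_0, X_{n_k}) = \Psi_{X_{n_k}}(Z_{n_k}) - t.
\]
Since $X_n \to \xi$ in the Gardiner-Masur compactification, $\Psi_{X_n} \to \Psi_\xi$ uniformly on compact subsets of $\TS$; passing to the limit yields $\Psi_\xi(\mathbf{G}_*(t)) - \Psi_\xi(Z_*) = -t$, so $\mathbf{G}_*$ is optimal for $\xi$. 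The symmetric computation applied to $Y_{n_k}$ shows $\widetilde{\mathbf{G}_*}$ is optimal for $\eta$. By the uniqueness in Theorem \ref{thm:horofunction}, $\mathbf{G}_* = \mathbf{G}_\infty$ (up to shift of parametrization), and since every subsequential limit coincides with $\mathbf{G}_\infty$, the full sequence $\mathbf{G}_n$ converges to $\mathbf{G}_\infty$.
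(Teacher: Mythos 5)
Your overall architecture (normalize each $\mathbf{G}_n$ at a ``midpoint'' $Z_n$, prove $\{Z_n\}$ is precompact, show every subsequential limit is optimal for $\xi$ and $\eta$, then invoke the uniqueness in Theorem \ref{thm:horofunction}) is exactly the paper's strategy, and your final step --- passing $\Psi_{X_{n_k}}(\mathbf{G}_{n_k}(t))-\Psi_{X_{n_k}}(Z_{n_k})=-t$ to the limit using locally uniform convergence of $\Psi_{X_n}$ to $\Psi_\xi$ --- is correct and essentially identical to the paper's. But the step you yourself flag as ``the main obstacle,'' precompactness of $\{Z_n\}$, is asserted rather than proved, and the mechanism you gesture at does not work as stated. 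A uniform lower bound on the common value $\Psi_{X_n}(Z_n)=\Psi_{Y_n}(Z_n)$ is automatic: that value is $-\langle X_n\,|\,Y_n\rangle_{X_0}=\tfrac{1}{2}\log i(X_n,Y_n)\to \tfrac{1}{2}\log i(\xi,\eta)>-\infty$ by filling and Theorem \ref{thm:Miyachi}. Boundedness of this quantity alone is in no way ``incompatible with escape''; it is consistent with $d(X_0,Z_n)\to\infty$. What is actually needed (and what the paper does) is the conversion $\log i(X_n,Z_n)=\Psi_{X_n}(Z_n)-d(X_0,Z_n)$: if $Z_n$ escaped to some boundary point $\lambda$, continuity of Miyachi's intersection form would give $i(\xi,\lambda)=\lim i(X_n,Z_n)=0$ and likewise $i(\eta,\lambda)=0$, which is then contradicted.

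There is a second, subtler gap hiding in that contradiction, which you also do not address: the filling hypothesis is stated only for $\mu\in\mathcal{MF}\setminus\{0\}$, while the accumulation point $\lambda$ of the escaping $Z_n$ is an arbitrary Gardiner--Masur boundary point, which need not lie in $\mathcal{PMF}$. Concluding $i(\xi,\lambda)+i(\eta,\lambda)>0$ requires extending the filling condition to all boundary points; the paper does this in the Appendix (Lemma \ref{lem:app3} and Proposition \ref{lem:positive}), showing via Minsky's inequality that every $\lambda$ dominates, in intersection with $\xi$ and $\eta$, a measured foliation $F_0$ obtained as a limit of vertical foliations, so that $i(\xi,\lambda)+i(\eta,\lambda)\geq i(\xi,F_0)+i(\eta,F_0)>0$. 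Until you supply both the intersection-number computation for the midpoints and this extension of the filling condition, the precompactness claim --- and hence the proof --- is incomplete.
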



	To prove Theorem \ref{thm:convergence} , we first show that there is a compact set $\mathcal{K}\subset \TS$ such that each geodesic $\mathbf{G}_k$  intersects $\mathcal{K}$. 
	This is confirmed by the assumption that $\xi$ and $\eta$ fill up the surface.
	Up to a subsequence, $\mathbf{G}_k$ converges to some geodeic $\mathbf{G}$. 
	Finally, we show that $\mathbf{G}$ is an optimal geodesic for $\xi$ and $\eta$. 
	Then Theorem \ref{thm:convergence} follows from the uniqueness of optimal geodesic.

	A corollary of Theorem \ref{thm:convergence} 
	is that Teichm\"uller geodesics are ``sticky".
	See Theorem \ref{thm:sticky}.
	The notion is introduced by Duchin and Fisher \cite{DF}.
	They considered the Thurston compactification of $\TS$ and made the following conjecture.
	
	\begin{conjecture}[Duchin-Fisher] Let $\mathbf{G}$ be a Teichm\"uller geodesic ending at two simple closed curves $\alpha$ and $\beta$ (it is necessary that $\alpha$ and $\beta$ fill up the surface). There exists a compact set $\mathcal{K}\subset \TS$ such that for any sequences $X_n \to \alpha$ and $Y_n \to \beta$ in the Thurston compactification, the geodesic segments connecting $X_n$ and $Y_n$ intersect $\mathcal{K}$ for sufficiently large $n$.
	\end{conjecture}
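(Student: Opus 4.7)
The plan is to follow the outline of Section~1.4: first produce a compact set $\mathcal{K}\subset\TS$ met by every $\mathbf{G}_n$ (so that Arzel\`a-Ascoli yields a convergent subsequence), then show that every such subsequential limit is an optimal geodesic for the pair $(\xi,\eta)$, and finally invoke the uniqueness part of Theorem~\ref{thm:horofunction} to identify every subsequential limit with the optimal geodesic and thereby upgrade subsequential convergence to convergence of the full sequence.

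For the compactness step, I would start from the identity, valid for any $Z\in\TS$ and a fixed base point $X_0$,
$$\Psi_{X_n}(Z)+\Psi_{Y_n}(Z)=d_{\mathcal{T}}(Z,X_n)+d_{\mathcal{T}}(Z,Y_n)-d_{\mathcal{T}}(X_0,X_n)-d_{\mathcal{T}}(X_0,Y_n),$$
whose minimum along $\mathbf{G}_n$ is attained on the entire segment $[X_n,Y_n]$ and equals the constant $d_{\mathcal{T}}(X_n,Y_n)-d_{\mathcal{T}}(X_0,X_n)-d_{\mathcal{T}}(X_0,Y_n)$. Since $X_n\to\xi$ and $Y_n\to\eta$, one has $\Psi_{X_n}+\Psi_{Y_n}\to\Psi_\xi+\Psi_\eta$ pointwise. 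I would then argue that the filling hypothesis forces $\Psi_\xi+\Psi_\eta$ to be proper on $\TS$, in the sense that its sublevel sets are relatively compact and stay away from the Gardiner-Masur boundary, because filling prevents a sequence $Z_n\to\zeta\in\GM$ from keeping both $\Psi_\xi(Z_n)$ and $\Psi_\eta(Z_n)$ bounded above (via the extremal-length representation of Gardiner-Masur horofunctions and the condition $i(\xi,\mu)+i(\eta,\mu)>0$ for all $\mu\in\MF\setminus\{0\}$). Choosing $\mathbf{G}_n(0)\in[X_n,Y_n]$ so as to (nearly) realize the infimum of $\Psi_{X_n}+\Psi_{Y_n}$ on $\TS$ then confines $\mathbf{G}_n(0)$ to a common compact set, and Arzel\`a-Ascoli applied to the $1$-Lipschitz maps $\mathbf{G}_n:\mathbb{R}\to\TS$ yields a locally uniform subsequential limit $\mathbf{G}$, itself a Teichm\"uller geodesic.

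To verify optimality, parametrize so that $\mathbf{G}_n$ travels from $Y_n$ to $X_n$ with increasing $t$, with $\mathbf{G}_n(a_n)=X_n$; note $a_n\to+\infty$ since $d_{\mathcal{T}}(\mathbf{G}_n(0),X_n)\to\infty$. For each fixed $t\in\mathbb{R}$ and all sufficiently large $n$,
$$\Psi_{X_n}(\mathbf{G}_n(t))-\Psi_{X_n}(\mathbf{G}_n(0))=(a_n-t)-a_n=-t,$$
and passing to the limit, using locally uniform convergence $\Psi_{X_n}\to\Psi_\xi$ (which expresses $X_n\to\xi$ in the horofunction compactification) together with $\mathbf{G}_n\to\mathbf{G}$, I obtain $\Psi_\xi(\mathbf{G}(t))-\Psi_\xi(\mathbf{G}(0))=-t$ for every $t$; so $\mathbf{G}$ is optimal for $\xi$. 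The symmetric argument with $Y_n\to\eta$ and the reversed geodesic $\widetilde{\mathbf{G}}$ shows optimality for $\eta$. By Theorem~\ref{thm:horofunction} there is a unique such $\mathbf{G}$ for $(\xi,\eta)$, so $\mathbf{G}$ does not depend on the extracted subsequence, and therefore the full sequence $\mathbf{G}_n$ converges to it.

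The main obstacle is the compactness claim, i.e.\ promoting the filling condition into genuine properness of $\Psi_\xi+\Psi_\eta$ on $\TS$, which requires a careful use of the extremal-length representation of Gardiner-Masur horofunctions to control what happens as candidate basepoints approach $\GM$. Once that is in hand, the remainder is a fairly standard limit passage using the continuity of the horofunction compactification together with the explicit along-geodesic computation of $\Psi_{X_n}$ and $\Psi_{Y_n}$.
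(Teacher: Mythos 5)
Your proposal does not prove the statement it is aimed at. The statement is the Duchin--Fisher conjecture itself, whose hypotheses are $X_n \to \alpha$ and $Y_n \to \beta$ in the \emph{Thurston} compactification, with $\alpha,\beta$ simple closed curves. You silently replace these by convergence $X_n \to \xi$, $Y_n \to \eta$ in the \emph{Gardiner--Masur} compactification to a pair of boundary points that fill up the surface. That is a different statement: it is exactly Theorem~\ref{thm:convergence} (and Corollary~\ref{thm:sticky}) of the paper, and the paper explicitly does \emph{not} prove the conjecture --- it remains open there, precisely because it is unknown whether $X_n \to \alpha$ in the Thurston compactification forces $X_n$ to converge (or even to accumulate only at controlled points) in the Gardiner--Masur compactification; the two notions of convergence are known to agree only when the limit is a uniquely ergodic measured foliation, which a simple closed curve is not. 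A Thurston-convergent sequence could a priori accumulate at many Gardiner--Masur boundary points, and nothing in your argument identifies these accumulation points or shows that the resulting pairs fill up the surface. This bridge from the Thurston hypothesis to the Gardiner--Masur hypothesis is the missing idea, and it is where the entire difficulty of the conjecture lies.

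Even for the Gardiner--Masur version, your compactness step contains an error. The infimum of $\Psi_{X_n}+\Psi_{Y_n}$ over $\TS$ is attained at \emph{every} point of the segment $[X_n,Y_n]$ (your own observation), so ``choosing $\mathbf{G}_n(0)$ to nearly realize the infimum'' does not single out a point and does not confine it to a compact set: the endpoints $X_n$, $Y_n$ are themselves minimizers and they leave every compact set. Nor does properness of the limit $\Psi_\xi+\Psi_\eta$ (even if established) transfer to the approximating functions, since $\Psi_{X_n}+\Psi_{Y_n} \to \Psi_\xi+\Psi_\eta$ only locally uniformly, which gives no control at points escaping to infinity. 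The paper instead first shows, via Miyachi's intersection number $\log i(X,Y)=d_{\mathcal{T}}(X,Y)-d_{\mathcal{T}}(X_0,X)-d_{\mathcal{T}}(X_0,Y)$, that $d_{\mathcal{T}}(X_n,Y_n)$ grows faster than $d_{\mathcal{T}}(X_0,X_n)$ and $d_{\mathcal{T}}(X_0,Y_n)$ (Lemma~\ref{lem:faster}), then picks the specific point $Z_n\in\mathbf{G}_n$ with $d_{\mathcal{T}}(X_n,Z_n)=d_{\mathcal{T}}(X_0,X_n)$ and uses the boundary continuity of $i(\cdot,\cdot)$ together with the appendix estimate $\inf_\lambda\, i(\xi,\lambda)+i(\eta,\lambda)>0$ (Proposition~\ref{lem:positive}) to force $Z_n$ into a fixed ball (Lemmas~\ref{lem:compact}, \ref{lem:quantitative}); your sketch would need to be repaired along such lines. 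Your final steps --- passing to a subsequential limit, computing $\Psi_\xi(\mathbf{G}(t))=-t$ and $\Psi_\eta(\mathbf{G}(t))=t$, and invoking the uniqueness in Theorem~\ref{thm:horofunction} --- do match the paper's argument in Section~6 once compactness is in hand, but they only yield the Gardiner--Masur analogue, not the conjecture as stated.
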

	
	Theorem \ref{thm:convergence} shows that the above conjecture is true if we replace the Thurston compactification by the Gardiner-Masur compactification. 
	However, we don't know whether $X_n \to \alpha$ (where $\alpha$ is a simple closed curve) in the Thurston compactification implies $X_n \to \alpha$ in the Gardiner-Masur compactification or not. 
	The two convergences are equivalent if the limit is a uniquely ergodic measured foliation \cite[Corollary 1]{miyachi2013teichmuller}.

	\subsection{Organization of the paper}
	In \S 2 we give some preliminaries on Teichm\"uller theory and measured foliations. 
	In \S 3 we summarize without proofs the relevant results on
	the Gardiner-Masur compactification of $\TS$.
	In \S 4, we prove Theorem \ref{thm:Busemann}. We study the geometry of {level sets} in \S 5 and prove Theorem \ref{thm:horofunction}.
	In \S 6, we prove Theorem \ref{thm:convergence} .

	\section{Preliminaries}\label{sec:pre}
	
	\noindent 
	
	In this section, we give a brief exposition of Teichm\"uller theory of Riemann surfaces, quadratic differentials and measured foliations.
	More details can be found in the books \cite{FM2012,fathi1979travaux}.
	
	\subsection{Teichm\"uller space}
	
	Let $S$ be an oriented closed surface of genus $g$ with $n$ punctures,
	where $3g - 3 +n \geq 2$. 
	The \emph{Teichm\"uller space} $\mathcal{T}(S)$ is the space of equivalence classes of marked Riemann surfaces
	$(X, f)$, where $X$ is a Riemann surface of  analytically finite type
	$(g,n)$ and  $f : S \to X$
	is an orientation-preserving homeomorphism. The equivalence relation is given by {$(X_1, f_1) \sim (X_2, f_2)$} if there is a conformal mapping
	$h : X_1 \rightarrow X_2$ homotopic to  $f_2 \circ f_1^{-1}$.

	For any two equivalence classes
	$[(X_1, f_1)], [(X_2,f_2)] \in \mathcal{T}(S)$,
	their \emph{Teichm\"uller distance} is defined by
	$$ d_{\mathcal{T}}([(X_1, f_1)], [(X_2,f_2)]) = \frac{1}{2} \inf_{\phi} \log K(\phi),$$
	where $\phi$ is taken over all quasiconformal mappings
	$\phi: X_1 \rightarrow X_2$
	homotopic to $f_2\circ f_1^{-1}$, and $K(\phi)$ is the maximal quasiconformal
	dilatation of $\phi$.
	
	For  simplicity of notation, we shall denote a point in $\mathcal{T}(S)$ by a Riemann surface $X$,
	without explicit reference to the marking or to the equivalence relation.
	
	The Teichm\"uller metric is a complete Finsler metric on $\TS$. Any two distinct points in $\TS$ are connected by a unique geodesic. 
	Teichm\"uller geodesic will be defined in \S \ref{subsec:QD}.

	\subsection{Measured foliations}
	Recall that a \textit{measured foliation} $F$ on $S$ is a foliation (with a finite number of singularities) with a transverse invariant measure. This means that if the local coordinates send the regular leaves of $F$ to horizontal $\operatorname{arcs}$ in $\mathbb{R}^2$, then the transition functions on $\mathbb{R}^2$ are of the form $(f(x, y), \pm y + c$) where $c$ is a constant, and the measure is given by $|d y|$. The allowed singularities of $\mathcal{F}$ are topologically the same as those that occur at $z=0$ in the line field of $z^{p-2} d z^2, p \geq 3$ or  $p=1$ at the puncture of $S$.
	
	Let $\mathcal{S}$ be the set of free homotopy classes of essential (non-trivial, non-peripheral) simple closed curves on $S$. The intersection number $i(\gamma, F)$ of a simple closed curve $\gamma$ with a measured foliation $F$ endowed with transverse measure $\mu$ is defined by
	$$
	i(\gamma, F)=\inf _{\gamma^{\prime}} \int_{\gamma^{\prime}} d \mu,
	$$
	where the infimum is taken over all simple closed curves $\gamma^{\prime}$ in the isotopy class of $\gamma$.
	Two measured foliations $F$ and $F^{\prime}$ are measure equivalent if, for all $\gamma \in \mathcal{S}$, $$i(\gamma, F)=i\left(\gamma, F^{\prime}\right).$$ Denote by $\mathcal{M F}=\mathcal{M F}(S)$ the space of equivalence classes of measured foliations on $S$.
	
	Two measured foliations ${F}$ and ${F}^{\prime}$ are projectively equivalent if there is a constant $b>0$ such that ${F}=b \cdot {F}^{\prime}$, i.e. $i(\gamma, {F})=b \cdot i\left(\gamma, {F}^{\prime}\right)$ for all $\gamma \in \mathcal{S}$. The space of projective equivalence classes of foliations is denoted by $\mathcal{P M F}$.
	
	Thurston showed that $\mathcal{M F}$ is homeomorphic to a $6 g-6+2 n$ dimensional ball and $\mathcal{P M \mathcal { F }}$ is homeomorphic to a $6 g-7+2 n$ dimensional sphere,
	and the set $\mathcal{S}$ (corresponding to
	one-cylinder measured foliations) is dense in $\mathcal{PMF}$. We refer to \cite{fathi1979travaux}
	for more details.

	The space $\mathcal{PMF}$ can be considered as a completion of  $\mathcal{S}$. 
	It plays an important role in the work of Thurston on classification of mapping classes.
	Following Thurston, 
	$\mathcal{PMF}$ is a natural boundary of $\TS$ called the Thurston boundary. 
	In \S \ref{sec:GM}, we will introduce a larger boundary of $\TS$ called the 
	Gardiner-Masur boundary. 
	
	\bigskip
	
	Given a conformal metric $\rho$ on a Riemann surface $X$, denote the $\rho$-area of $X$ by
	$
	\operatorname{Area}_{\rho}(X)$.
	The extremal length of $\gamma\in \mathcal{S}$ on $X$ is defined by
	$$
	\operatorname{Ext}_{X}(\gamma)=\sup _{\rho} \frac{\left({\inf_{\gamma^{\prime}\simeq \gamma} \int_{\gamma^{\prime}} \rho(z)|\mathrm{dz}|}\right)^{2}}{\operatorname{Area}_{\rho}(X)},
	$$
	where $\rho(z)|\mathrm{d} z|$ ranges over all conformal metrics on $X$ with
	$
	0<\operatorname{Area}_{\rho}(X)<\infty .
	$

	Kerckhoff \cite{kerckhoff1980asymptotic} showed that the notion 
	of extremal length extends continuously to the space of measured foliations. Moreover, he proved the following important distance formula for the Teichm\"uller metric.
	
	\begin{theorem}
		The Teichm\"uller distance between any $X, Y\in \T$ is equal to 
		\begin{eqnarray*}
			d_{\mathcal{T}}(X,Y) &=& \frac{1}{2} \log\sup_{\gamma\in \mathcal{S}}\frac{\operatorname{Ext}_{X}(\gamma)}{\operatorname{Ext}_{Y}(\gamma)} \\
			&=& \frac{1}{2} \log\sup_{\mu\in \mathcal{MF}} \frac{\operatorname{Ext}_{X}(\mu)}{\operatorname{Ext}_{Y}(\mu)}. 
		\end{eqnarray*}
	\end{theorem}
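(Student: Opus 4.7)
The plan is to establish two matching bounds for $d_{\mathcal{T}}(X,Y)$ in terms of the supremum of the extremal-length ratio over $\mathcal{MF}$, and then to observe that the suprema over $\mathcal{S}$ and over $\mathcal{MF}$ coincide by density. I would work with the $\mathcal{MF}$ formulation first, as it is a priori larger and hence easier to produce a specific foliation witnessing the sharp lower bound.

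For the inequality $d_{\mathcal{T}}(X,Y)\geq \frac{1}{2}\log\sup_{\mu\in\mathcal{MF}}\Ext_X(\mu)/\Ext_Y(\mu)$, I would invoke the classical quasiconformal distortion inequality for extremal length: for every quasiconformal homeomorphism $\phi\colon X\to Y$ in the marking homotopy class, $\Ext_X(\mu)\leq K(\phi)\,\Ext_Y(\mu)$ for every $\mu\in \mathcal{MF}$. Taking the supremum over $\mu$, then the infimum over $\phi$, and applying the definition of $d_{\mathcal{T}}$ yields the desired bound.

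For the reverse inequality I would appeal to Teichm\"uller's existence theorem: there is a unique extremal quasiconformal map $\phi^{*}\colon X\to Y$ with dilatation $K_0=e^{2d_{\mathcal{T}}(X,Y)}$, determined by an initial holomorphic quadratic differential $q$ on $X$ and a terminal differential $q'$ on $Y$ of equal total norm. In the natural coordinates of $q$ and $q'$, the map $\phi^{*}$ acts as $(x,y)\mapsto(K_0^{1/2}x,\,K_0^{-1/2}y)$; tracking the transverse measure of the vertical foliation $F_v$ of $q$ against curves and their $\phi^{*}$-images shows that, as an element of $\mathcal{MF}(S)$, the vertical foliation of $q'$ coincides with $K_0^{1/2}F_v$. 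Combined with the identities $\Ext_X(F_v)=\|q\|$ and $\Ext_Y(K_0^{1/2}F_v)=\|q'\|=\|q\|$ together with the quadratic homogeneity of extremal length, this yields $\Ext_X(F_v)/\Ext_Y(F_v)=K_0$, exhibiting an element of $\mathcal{MF}$ that attains the desired ratio. To pass from $\mathcal{MF}$ to $\mathcal{S}$, I would invoke density of weighted simple closed curves in $\mathcal{MF}$ together with continuity of $\Ext_X$ and $\Ext_Y$ on $\mathcal{MF}$.

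The main obstacle is the careful geometric accounting in the reverse direction: tracking how the transverse measures of the Teichm\"uller horizontal and vertical foliations rescale under $\phi^{*}$, and using the identification $\Ext_X(F)=\|q\|$ when $F$ is the horizontal or vertical foliation of a holomorphic quadratic differential $q$ on $X$. Without these two geometric inputs, the sharp equality cannot be reached; the easy direction and the $\mathcal{MF}$-to-$\mathcal{S}$ density step are then routine.
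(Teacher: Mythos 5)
Your argument is correct, but note that the paper itself offers no proof of this statement: it is quoted verbatim from Kerckhoff \cite{kerckhoff1980asymptotic}, so there is no internal argument to compare against. What you have written is essentially Kerckhoff's original proof. The easy direction is the quasiconformal distortion inequality $\frac{1}{K}\Ext_X(\mu)\le \Ext_Y(\mu)\le K\,\Ext_X(\mu)$ followed by taking suprema and infima, exactly as you say. The sharp direction rests on precisely the two geometric inputs you isolate: that under the Teichm\"uller map the vertical foliation of the terminal differential is $K_0^{1/2}F_v(q)$ as a point of $\mathcal{MF}$, and that $\Ext_X\bigl(F_v(q)\bigr)=\|q\|$ and $\Ext_Y\bigl(F_v(q')\bigr)=\|q'\|=\|q\|$; combined with quadratic homogeneity this gives $\Ext_X(F_v)/\Ext_Y(F_v)=K_0=e^{2d_{\mathcal{T}}(X,Y)}$, so the supremum over $\mathcal{MF}$ is attained. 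For the passage from $\mathcal{MF}$ to $\mathcal{S}$, your density-plus-continuity step is fine, but state the two small points it uses: the ratio $\Ext_X(\mu)/\Ext_Y(\mu)$ is invariant under scaling $\mu$ (so weighted and unweighted curves give the same supremum), and $\Ext_Y(\mu)>0$ for $\mu\neq 0$, so the ratio is continuous on $\mathcal{MF}\setminus\{0\}$ and limits along approximating weighted curves behave as claimed.
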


	\subsection{Quadratic differentials and Teichm\"uller geodesics}
	\label{subsec:QD}
	A \emph{holomorphic quadratic differential} $q$ on $X \in \mathcal{T}_{g, n}$ is a tensor which is locally represented by $q=q(z) d z^2$, where $q(z)$ is a holomorphic function on the local conformal coordinate $z$ of $X$. We allow holomorphic quadratic differentials to have at most simple poles at the punctures of $X$. Denote the vector space of holomorphic quadratic differentials on $X$ by $Q(X)$.
	
	For $q\in Q(X)$, its area is defined by 
	$$\|q\|=\int_X |q(z)| \ |d z|^2.$$
	We denote by $Q^1(X)$ the subset of quadratic differentials on
	$X$ of unit area. 
	
	A quadratic differential $q$ determines a pair of transverse measured foliations ${F}_h(q)$ and ${F}_v(q)$, called the \emph{horizontal and vertical foliations} for $q$. Away from the zeroes of 
	$q$, we can {choose} natural coordinates  $z=x+i y$ such that 
	$q=dz^2$. Then the leaves of ${F}_h(q)$ are given by
	$
	y=\text{const;}
	$
	and the leaves of ${F}_v(q)$ are given by
	$
	x=\text{const},
	$
	and the transverse measures are $|d y|$ and $|d x|$. The foliations ${F}_h(q)$ and ${F}_v(q)$ have the zero set of $q$ as their common singular set, and at each zero of order $k$ they have a $(k+2)$-pronged singularity, locally modelled on the singularity at the origin of $z^k d z^2$.

	Teichm\"uller geodesic rays are parameterized by quadratic differentials. Given a Riemann surface $X\in \T$ and a quadratic differential $q \in Q^1(X)$, there is a family of Riemann surfaces $X_t$ (where $t\geq 0$)  such that
	$$
	d_{\mathcal{T}}\left(X, X_t\right)=t .
	$$
	Moreover, the Teichm\"uller maps (i.e., extremal quasiconformal map) $$f_t: X \to X_t$$
	expands along the leaves of the horizontal foliation  by $e^t$ and contracts along the leaves of the vertical foliation by $e^{-t}$. 
	The family $\{X_t\}_{t \geq 0}$ is called the Teichm\"uller geodesic ray determined by $q$,  denoted by $\mathbf{R}_q(t)$.	We can extend 
	$\mathbf{R}_q(t)$ to a complete geodesic $\mathbf{G}_q(t)$ by allowing $t<0$. 
	
	\medskip

	A pair ${F}_{+}, {F}_{-}$ of measured foliations is called \textit{fill up the surface} if for any $\mu\in\MF \setminus\{0\}$,
	$$
	i\left({F}_{+}, \mu\right)+i\left({F}_{-}, \mu\right) \neq 0.
	$$
	
	It is straightforward that the vertical and horizontal foliations of a quadratic differential fill up the surface. The converse is a theorem of Gardiner-Masur \cite{gardiner1991extremal}:

	\begin{theorem}\label{thm:geodesic} If a pair  ${F}_{+}, {F}_{-}$ of measured foliations on $S$ fill up the surface, then there is a Riemann surface $X$ and a holomorphic quadratic differential $q$ on $X$ such that 
		$$ {F}_{+}={F}_{v}(q),{F}_{-}={F}_h(q).$$
		Moreover, both $X$ and $q$ are unique. 
	\end{theorem}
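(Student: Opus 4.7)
The plan is to prove Theorem~\ref{thm:geodesic} by a variational argument on extremal length. I would minimize the functional
\begin{equation*}
\Phi(X) := \operatorname{Ext}_X(F_+) + \operatorname{Ext}_X(F_-)
\end{equation*}
on $\TS$ and then invoke Gardiner's variational formula together with Hubbard--Masur's theorem to read off the desired quadratic differential from a critical point.

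The first step is to verify that $\Phi$ is continuous and proper on $\TS$. Properness uses the filling hypothesis in combination with the standard inequality $\operatorname{Ext}_X(F) \geq i(F,\gamma)^2/\operatorname{Ext}_X(\gamma)$ valid for every $\gamma \in \mathcal{S}$: if $X_n$ escapes every compact subset of $\TS$, Mumford's compactness produces a simple closed curve $\gamma$ with $\operatorname{Ext}_{X_n}(\gamma) \to 0$, and since $i(F_+,\gamma) + i(F_-,\gamma) > 0$ by filling, at least one of $\operatorname{Ext}_{X_n}(F_\pm)$ must tend to infinity. Hence $\Phi$ attains its infimum at some $X_0 \in \TS$.

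Next I apply Gardiner's formula, $d\operatorname{Ext}_X(F) = -2\, q_F(X)$, where $q_F(X) \in Q(X)$ denotes the Hubbard--Masur quadratic differential whose horizontal foliation equals $F$ with $\|q_F(X)\| = \operatorname{Ext}_X(F)$, viewed as an element of the cotangent space $T^*_X\TS$ via the natural real pairing $(\mu,q) \mapsto \operatorname{Re}\int \mu q$. The critical equation $d\Phi(X_0) = 0$ then reads
\begin{equation*}
q_{F_-}(X_0) + q_{F_+}(X_0) = 0.
\end{equation*}
Setting $q := q_{F_-}(X_0)$, one has $F_h(q) = F_-$ by construction, while $-q = q_{F_+}(X_0)$ has horizontal foliation $F_+$. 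Since multiplication of a quadratic differential by $-1$ interchanges its horizontal and vertical foliations, we obtain $F_v(q) = F_h(-q) = F_+$, so $(X_0,q)$ is the pair required by the theorem.

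The main obstacle is the uniqueness statement, which amounts to ruling out a second critical point of $\Phi$. The cleanest route is to invoke strict convexity of $X \mapsto \operatorname{Ext}_X(F)$ along Teichm\"uller geodesics, which transfers to strict convexity of $\Phi$ and forces uniqueness of its minimum. As a partial sanity check, along the Teichm\"uller geodesic $\mathbf{G}_q$ through $X_0$ generated by $q$, the extremal lengths of $F_\pm$ vary as $e^{\pm 2t}\|q\|$, so $\Phi(\mathbf{G}_q(t))$ is strictly convex with unique minimum at $t=0$; combined with Hubbard--Masur uniqueness of $q$ on any fixed Riemann surface, this handles uniqueness within the Teichm\"uller disk through $(X_0,q)$. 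Extending strict convexity to arbitrary tangent directions in $\TS$ is the genuine technical heart of the uniqueness argument, but it is a classical property of the extremal length functional.
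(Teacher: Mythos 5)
First, a point of reference: the paper does not prove this statement at all --- it is quoted from Gardiner--Masur \cite{gardiner1991extremal}, so there is no internal proof to compare with. Your variational scheme (minimize $\Phi(X)=\operatorname{Ext}_X(F_+)+\operatorname{Ext}_X(F_-)$, apply Gardiner's differentiation formula and Hubbard--Masur at a critical point) is in fact the classical route to the \emph{existence} part, and the critical-point computation $q_{F_-}(X_0)+q_{F_+}(X_0)=0$ is correct. But two steps are not sound as written. The properness argument is wrong: escaping every compact subset of $\TS$ does \emph{not} produce a simple closed curve of small extremal length --- Mumford compactness concerns moduli space, and the thick part of Teichm\"uller space is non-compact (e.g.\ the orbit $X_n=\varphi^n(X_0)$ of a pseudo-Anosov $\varphi$ leaves every compact set while $\inf_\gamma\operatorname{Ext}_{X_n}(\gamma)$ stays bounded below). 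The statement you want is still true, but the standard proof runs differently: if $\Phi(X_n)\le C$, apply Minsky's inequality $i(F_\pm,\mu)^2\le\operatorname{Ext}_{X_n}(F_\pm)\operatorname{Ext}_{X_n}(\mu)$ to all $\mu$ normalized by $\operatorname{Ext}_{X_0}(\mu)=1$; the filling hypothesis bounds $i(F_+,\mu)+i(F_-,\mu)$ below on this compact set, so $\operatorname{Ext}_{X_n}(\mu)$ is bounded below uniformly in $\mu$, and Kerckhoff's distance formula then bounds $d_{\mathcal{T}}(X_0,X_n)$. This is fixable, so it is a repairable error rather than a fatal one.

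The genuine gap is the uniqueness step. You reduce it to strict convexity of $X\mapsto\operatorname{Ext}_X(F)$ along Teichm\"uller geodesics and call this ``a classical property of the extremal length functional''; it is not. What is known is log-plurisubharmonicity of extremal length (which gives subharmonicity on Teichm\"uller disks, not convexity along their real geodesics) and \emph{quasi-}convexity of extremal length along Teichm\"uller geodesics (Lenzhen--Rafi); actual convexity, let alone strict convexity, is not an available theorem, so the ``technical heart'' you defer is precisely the unproved claim carrying the whole uniqueness statement. Your in-disk sanity check only rules out other minima on the single geodesic generated by $q$ itself, which says nothing about a second critical point elsewhere in $\TS$. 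To complete the argument you should replace the convexity appeal by the classical uniqueness mechanism: either Gardiner--Masur's original argument via Teichm\"uller's uniqueness theorem (comparing the two flat structures realizing the same pair of foliations and showing the marking-compatible map between them is conformal), or the equality analysis in the inequality $\operatorname{Ext}_X(F_+)\operatorname{Ext}_X(F_-)\ge i(F_+,F_-)^2$, combined with Hubbard--Masur uniqueness of $q$ once $X$ is pinned down. As it stands, existence is essentially right (modulo the properness fix), but uniqueness is not proved.
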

	
	When ${F}_{+}, {F}_{-}$ fill up the surface, they 
	determine a unique Teichm\"uller geodesic
	$\mathbf{G}_q(t)$, where for  each $t$, $\mathbf{G}_q(t)$ is corresponding 
	the pair of measured foliations $e^t {F}_{+}, e^{-t}{F}_{-}$.

	\section{Gardiner-Masur compactification}\label{sec:GM}
	
	In this section, we fix a base point $X_{0}$ in $\mathcal{T}(S)$.
	Let $\mathbb{R}_{+}^{\mathcal{S}}$ be the space of nonnegative functions
	on $\mathcal{S}$, endowed with the weak topology.
	Let $\mathrm{P}\mathbb{R}_{+}^{\mathcal{S}}$ be the set of projective classes of
	{$\mathbb{R}_{+}^{\mathcal{S}}\setminus\{0\}$}.

	\subsection{Gardiner-Masur boundary}
	We define the embedding $\Phi: \mathcal{T}(S) \to\mathrm{P}\mathbb{R}_{+}^{\mathcal{S}}$ by
	taking
	$$\Phi(X)=
	[(\sqrt{\operatorname{Ext}_{X}(\alpha)})_{\alpha\in \mathcal{S}}].$$
	Gardiner and Masur \cite{gardiner1991extremal} showed that the
	closure of $\Phi(\mathcal{T}(S))$ in $\mathrm{P}\mathbb{R}_{+}^{\mathcal{S}}$
	is compact. The boundary of $\Phi(\mathcal{T}(S))$ is called
	the \emph{Gardiner-Masur boundary}, denoted by $\partial_{\mathrm{GM}}\mathcal{T}(S)$.
	This boundary properly contains $\mathcal{PMF}$.

	Miyachi \cite[Theorem 1.1]{Miyachi2008Teichm} proved

	\begin{theorem}\label{Miyachi2008Teichm}
		Any point $\xi \in \partial_{\mathrm{GM}}\mathcal{T}(S)$ can be represented as
		a nonnegative continuous function $	i(\xi, \mu), \  \mu\in \MF$ on
such that
	 \begin{equation}\label{equ:im1}
	 	i(\xi, r \cdot \mu) = r \cdot  i(\xi,\mu)
	 \end{equation}
	 for all $r > 0$ and $\mu \in \mathcal{MF}$.
			The assignment $\mathcal{S} \ni \alpha
			\mapsto i(\xi, \alpha)$
			determines $\xi$ as a point of $\partial_{\mathrm{GM}}\mathcal{T}(S)$.
		Furthermore, the function $i(\xi, \cdot)$ is unique up to
		multiplication by a positive constant.
	\end{theorem}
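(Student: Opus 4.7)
My plan is to produce the representation $i(\xi,\cdot)\colon \MF\to\mathbb{R}_{\geq 0}$ as a limit of appropriately normalized extremal length functions along a convergent sequence in $\T$, and then verify continuity, homogeneity, and uniqueness using Kerckhoff's distance formula together with the density of $\mathcal{S}$ in $\PMF$.

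First I would fix a sequence $X_n \in \T$ with $\Phi(X_n) \to \xi$. By projective convergence in $\mathrm{P}\mathbb{R}_{+}^{\mathcal{S}}$ there exist scalars $\lambda_n>0$ so that $\lambda_n \sqrt{\Ext_{X_n}(\alpha)} \to f(\alpha)$ for every $\alpha \in \mathcal{S}$, where $f$ is a nonnegative function on $\mathcal{S}$ representing $\xi$. Kerckhoff's distance formula gives the uniform two-sided bound
$$e^{-\dT(X_n,X_0)}\sqrt{\Ext_{X_0}(\mu)}\leq \sqrt{\Ext_{X_n}(\mu)}\leq e^{\dT(X_n,X_0)}\sqrt{\Ext_{X_0}(\mu)},$$
so multiplying through by $\lambda_n$ shows that $f_n(\mu):=\lambda_n\sqrt{\Ext_{X_n}(\mu)}$ is a bounded sequence for each fixed $\mu\in\MF$. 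By a diagonal argument we may assume it converges on a countable dense subset of $\MF$.

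The key step is upgrading this to a well-defined continuous limit $i(\xi,\cdot)$ on all of $\MF$. I would prove that $\{f_n\}$ is equicontinuous on compact subsets of $\MF$, using Kerckhoff's extension of extremal length as a continuous function on $\T\times \MF$ together with the uniform majorization $f_n\leq C\sqrt{\Ext_{X_0}(\cdot)}$ furnished above. An Arzel\`a--Ascoli argument then produces the desired continuous limit. Homogeneity \eqref{equ:im1} is inherited from $\Ext_X(r\mu)=r^2\Ext_X(\mu)$, and the fact that the restriction to $\mathcal{S}$ determines $i(\xi,\cdot)$ on all of $\MF$ follows from density of $\mathcal{S}$ in $\PMF$ combined with continuity and homogeneity. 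The same density then implies uniqueness of $i(\xi,\cdot)$ up to a positive multiplicative constant, since any two such continuous homogeneous representations must agree up to scalar on $\mathcal{S}$ by the very definition of the Gardiner-Masur embedding.

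The main obstacle is the equicontinuity step: one needs a modulus of continuity for $\mu\mapsto \sqrt{\Ext_X(\mu)}$ that remains uniform in $X$ after the rescaling by $\lambda_n$. Pointwise convergence on a dense subset of $\MF$ together with the uniform bound alone does not suffice; without equicontinuity the subsequential limits could depend on the chosen diagonal extraction, and one could fail to attach a single well-defined function to $\xi$. I would expect the required estimate to come out of the length-area method behind the definition of extremal length, controlling $|\sqrt{\Ext_X(\mu)}-\sqrt{\Ext_X(\nu)}|$ for nearby $\mu,\nu\in\MF$ by intersection numbers with a fixed finite collection of test curves whose extremal lengths at $X_0$ bound the geometry uniformly.
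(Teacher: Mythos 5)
First, a point of reference: the paper does not prove this statement at all --- it is quoted as Miyachi's theorem \cite[Theorem 1.1]{Miyachi2008Teichm} --- so your attempt can only be measured against Miyachi's argument, whose overall architecture (normalize extremal-length functions along $X_n\to\xi$, extract a locally uniform limit, use density of $\mathcal{S}$ for determination and uniqueness) you have correctly reproduced in outline. The homogeneity and uniqueness parts of your write-up are fine. But there are two genuine gaps. The first is in the normalization step: with an \emph{arbitrary} choice of projective scalars $\lambda_n$, the two-sided Kerckhoff bound does not give the uniform majorization $f_n\leq C\sqrt{\Ext_{X_0}(\cdot)}$, nor even pointwise boundedness of $f_n(\mu)$ for a general $\mu\in\MF$. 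Indeed, all you can extract from convergence on $\mathcal{S}$ is that $\lambda_n\lesssim e^{\dT(X_0,X_n)}$ (pick $\alpha_0$ with $f(\alpha_0)>0$ and use the lower Kerckhoff bound), and then your majorization reads $f_n(\mu)\leq \lambda_n e^{\dT(X_0,X_n)}\sqrt{\Ext_{X_0}(\mu)}\lesssim e^{2\dT(X_0,X_n)}\sqrt{\Ext_{X_0}(\mu)}$, which blows up. The fix is to commit to the specific normalization $\lambda_n=e^{-\dT(X_0,X_n)}=K(X_0,X_n)^{-1/2}$ (exactly the normalization in Remark \ref{remark:one} of the paper): then $f_n(\mu)=\sqrt{\Ext_{X_n}(\mu)/K(X_0,X_n)}\leq\sqrt{\Ext_{X_0}(\mu)}$ is immediate, but you acquire two new obligations that your sketch omits --- showing that a subsequential limit is not identically zero (use that $\sup_{\mu\in\PMF}\Ext_{X_n}(\mu)/\Ext_{X_0}(\mu)=K(X_0,X_n)$ exactly, so the normalized ratio attains the value $1$ at some $\mu_n$ in the compact set $\PMF$, and pass to a limit), and showing that the limit restricted to $\mathcal{S}$ is a positive multiple of the given representative of $\xi$ (compare the two scaling sequences at a curve where the limit is positive).

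The second and more serious gap is the one you yourself flag: equicontinuity is the actual mathematical content of the theorem, and ``I would expect the required estimate to come out of the length--area method'' is not a proof. The standard way to supply it (and essentially what Miyachi does) is the subadditivity of $\sqrt{\Ext_X}$ on measured foliations carried by a common train track: if $F+G$ is defined, then $\sqrt{\Ext_X(F+G)}\leq\sqrt{\Ext_X(F)}+\sqrt{\Ext_X(G)}$, because the $\rho$-length of $F+G$ is at most the sum of the $\rho$-lengths in the length--area supremum. Working in a train-track chart with vertex cycles $v_1,\dots,v_N$, this yields a Lipschitz estimate of the form $|f_n(\mu)-f_n(\nu)|\leq C\,\|\mu-\nu\|\cdot\max_i f_n(v_i)$, and after the correct normalization $\max_i f_n(v_i)\leq\max_i\sqrt{\Ext_{X_0}(v_i)}$ is uniform in $n$; this is what makes Arzel\`a--Ascoli applicable and guarantees that the limit is independent of the diagonal extraction. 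Until an estimate of this kind is actually written down, the proposal establishes only pointwise convergence on weighted simple closed curves, which, as you note, does not by itself produce a well-defined continuous extension $i(\xi,\cdot)$ on $\MF$.
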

	
	We don't have an explicit formula for a general
	$\xi$. The following inequality is useful.

\begin{proposition}\label{prop_low-bound}
		Let $X_n \in \mathcal{T}(S)$ converge to a boundary point $\xi \in \partial_{\mathrm{GM}}\mathcal{T}(S)$. Assume that $q_n \in {Q^1(X_0)}$ is the initial quadratic differential associated to the Teichm\"uller map from $X_0$ to $X_n$. Up to a subsequence, we also assume that the vertical measured foliations ${F}_{v}(q_n)$ of $q_n$ converge to a measured foliation
		${F}$. Then we have
		$$
		i({F},\mu) \leq i(\xi,\mu)
		$$
		for any $\mu \in \mathcal{MF}$.
	\end{proposition}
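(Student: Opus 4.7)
The plan is to combine Minsky's inequality
\[
i(\mu,\nu)\le\sqrt{\Ext_Y(\mu)\,\Ext_Y(\nu)}\qquad (\mu,\nu\in\mathcal{MF},\ Y\in\mathcal{T}(S))
\]
with the explicit behaviour of extremal length along a Teichm\"uller geodesic, and then pass to the limit using Miyachi's canonical representative of $i(\xi,\cdot)$. The proof is short once the right tool and the right normalization are in place.

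First I fix a normalization of the representative $i(\xi,\cdot)$. Write $t_n=d_{\mathcal{T}}(X_0,X_n)$. By Miyachi's construction in the proof of Theorem~\ref{Miyachi2008Teichm}, the representative of $\xi$ may be chosen so that
\[
i(\xi,\mu)=\lim_{n\to\infty}e^{-t_n}\sqrt{\Ext_{X_n}(\mu)}\qquad (\mu\in\mathcal{MF}),
\]
passing to a subsequence if necessary. Kerckhoff's distance formula already gives $e^{-t_n}\sqrt{\Ext_{X_n}(\mu)}\le\sqrt{\Ext_{X_0}(\mu)}$, so this family is uniformly bounded on $\mathcal{MF}$, and the positive multiplicative constant left undetermined by Theorem~\ref{Miyachi2008Teichm} is fixed by this choice.

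Next comes the key computation. Since $q_n\in Q^1(X_0)$, the Hubbard--Masur identity yields $\Ext_{X_0}(F_v(q_n))=\|q_n\|=1$, and along the Teichm\"uller geodesic from $X_0$ with initial differential $q_n$, the vertical extremal length is multiplied by $e^{-2t_n}$, so $\Ext_{X_n}(F_v(q_n))=e^{-2t_n}$. Applying Minsky's inequality at $Y=X_n$ with $\nu=F_v(q_n)$ produces
\[
i(\mu,F_v(q_n))\le e^{-t_n}\sqrt{\Ext_{X_n}(\mu)}\qquad(\mu\in\mathcal{MF}).
\]
Letting $n\to\infty$ along the subsequence on which $F_v(q_n)\to F$, the left-hand side tends to $i(\mu,F)$ by continuity of the intersection number on $\mathcal{MF}$, while the right-hand side tends to $i(\xi,\mu)$ by the normalization above. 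This gives $i(F,\mu)\le i(\xi,\mu)$, as desired.

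The only step that really demands care is the identification of $\lim_n e^{-t_n}\sqrt{\Ext_{X_n}(\mu)}$ with a representative of $i(\xi,\cdot)$: this is essentially Miyachi's construction and amounts to fixing the positive constant left open by Theorem~\ref{Miyachi2008Teichm}. All the other ingredients --- the Hubbard--Masur identity $\Ext_X(F_v(q))=\|q\|$, Minsky's inequality, the exponential contraction of vertical extremal length along a Teichm\"uller geodesic, and the continuity of the intersection number on $\mathcal{MF}$ --- are classical.
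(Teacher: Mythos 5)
Your proposal is correct and follows essentially the same route as the paper's own argument (Lemma \ref{lem:app1}): Minsky's inequality applied at $X_n$ with $\nu=F_v(q_n)$, the contraction $\Ext_{X_n}(F_v(q_n))=e^{-2t_n}$ coming from $q_n$ being the initial differential of the Teichm\"uller map (the paper phrases this as $\Ext_{X_n}(\nu_n)=\Ext_{X_0}(\nu_n)/K(X_0,X_n)$ with $\Ext_{X_0}(\nu_n)=1$), and the normalization $i(\xi,\mu)=\lim_n \sqrt{\Ext_{X_n}(\mu)/K(X_0,X_n)}$, which is exactly the representative fixed in Remark \ref{remark:one}. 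No gaps.
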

In fact, the limit of ${F}_{v}(q_n)$ 
is unique, see \cite[Theorem 1.1]{Azemar2021qualitative} and \cite[Theorem 3.4]{LiuShi}. Proposition \ref{prop_low-bound} can be proved by  using Minsky's inequality.
See for instance Miyachi \cite[Proposition 5.1]{Miyachi2008Teichm}, Liu and Shi \cite[Lemma 3.10]{LiuShi}.
We also provide a proof, see Lemma \ref{lem:app1} in the Appendix. 
	
	\begin{remark}\label{remark:one}
		Note that the function $i(\xi, \cdot)$ is unique up to
		multiplication by a positive constant. In Proposition \ref{prop_low-bound},
		$i(\xi, \cdot)$ is chosen as following.
		For each $X_n$, we associate with a function on $\mathcal{MF}$:
	\begin{equation}\label{equ:im2}
		i(X_n, \mu)=\sqrt{\frac{\mathrm{Ext}_{X_n}(\mu)}{K(X_0,X_n)}},
		\end{equation}
		where $K(X_0,X_n)$ is the quasiconformal dilatation of the Teichm\"uller map from
		$X_0$ to $X_n$. Then $i(X_n, \mu)\to i(\xi,\mu)$
		uniformly on compact subsets of $\mathcal{MF}$
		as $X_n\to \xi$.
	\end{remark}

	Following Miyachi \cite{Miyachi2014}, 
	for any $X, Y \in \TS$,  we set
	$$\langle X   |   Y \rangle=\frac{1}{2}\left(	d_{\mathcal{T}}(X_0, X)+ 	d_{\mathcal{T}}(X_0, Y)-	d_{\mathcal{T}}(X,Y) \right).$$
	The intersection number between
	$X$ and $Y$  is defined  by
	
\begin{equation} \label{equ:im3}
	i(X,Y)= e^{-2\langle X    |   Y \rangle}.
\end{equation}
	
		The next theorem, proved by Miyachi \cite[Theorem 4]{Miyachi2014}, unifies the above notions
		\eqref{equ:im1} \eqref{equ:im2} and \eqref{equ:im3}. 
		
	\begin{theorem} \label{thm:Miyachi}
		The intersection number $i(X,Y)$ extends continuously to the Gardiner-Masur compactification.
	 In particular, for any two sequences $X_n$ and $Y_n$ in $\TS$ such that $X_n \to \xi\in \GMT$ and $Y_n\to \eta \in \GMT$, we have 
			$i(X_n, Y_n) \to i(\xi,\eta).$
			\end{theorem}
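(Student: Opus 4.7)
My plan combines Kerckhoff's distance formula, the continuous extension $i(X_{n},\mu)\to i(\xi,\mu)$ of Remark~\ref{remark:one}, and the identification of the Gardiner--Masur boundary with the horofunction boundary of $(\TS,d_{\mathcal{T}})$ due to Liu--Su \cite{liu2010horofunction}.

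First I derive the key identity. Substituting Kerckhoff's formula $e^{2d_{\mathcal{T}}(X,Y)}=\sup_{\mu\in\MF\setminus\{0\}}\mathrm{Ext}_{X}(\mu)/\mathrm{Ext}_{Y}(\mu)$ into $i(X,Y)=e^{d_{\mathcal{T}}(X,Y)-d_{\mathcal{T}}(X_{0},X)-d_{\mathcal{T}}(X_{0},Y)}$ and rewriting in terms of $i(X,\mu)=\sqrt{\mathrm{Ext}_{X}(\mu)/K(X_{0},X)}$ yields
\[
K(X_{0},Y)\cdot i(X,Y)=\sup_{\mu\in\MF\setminus\{0\}}\frac{i(X,\mu)}{i(Y,\mu)},
\]
together with the dual identity $K(X_{0},X)\cdot i(X,Y)=\sup_{\nu}i(Y,\nu)/i(X,\nu)$ obtained by swapping $X$ and $Y$. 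Since the right-hand sides are homogeneous of degree zero in $\mu$, the supremum may be restricted to the compact slice $\mathcal{E}:=\{\mu\in\MF:\mathrm{Ext}_{X_{0}}(\mu)=1\}$; on $\mathcal{E}$, Kerckhoff's inequality gives $i(X,\mu)\leq 1$ for all $X\in\TS$.

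Second, for fixed $X\in\TS$ and $Y_{n}\to\xi\in\GMT$, the horofunction convergence $d_{\mathcal{T}}(X,Y_{n})-d_{\mathcal{T}}(X_{0},Y_{n})\to\Psi_{\xi}(X)$ directly produces $i(X,Y_{n})\to e^{\Psi_{\xi}(X)-d_{\mathcal{T}}(X_{0},X)}=:i(X,\xi)$. This defines the extension in the second variable and establishes continuity on $\TS\times\overline{\TS}^{\mathrm{GM}}$.

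The main step, and the main obstacle, is joint continuity under $X_{n}\to\eta$, $Y_{n}\to\xi$. Multiplying the two identities gives the symmetric product formula
\[
K(X_{0},X_{n})K(X_{0},Y_{n})\cdot i(X_{n},Y_{n})^{2}=\Bigl(\sup_{\mu\in\mathcal{E}}\frac{i(X_{n},\mu)}{i(Y_{n},\mu)}\Bigr)\Bigl(\sup_{\nu\in\mathcal{E}}\frac{i(Y_{n},\nu)}{i(X_{n},\nu)}\Bigr),
\]
whose left side is manifestly symmetric and diverges. The uniform convergence $i(X_{n},\cdot)\to i(\eta,\cdot)$ and $i(Y_{n},\cdot)\to i(\xi,\cdot)$ on $\mathcal{E}$ (Remark~\ref{remark:one}), together with the compactness of $\mathcal{E}$, control each supremum up to the behaviour at the points of $\mathcal{E}$ where $i(\xi,\mu)$ or $i(\eta,\mu)$ vanishes. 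The delicate technical task is to show that the divergences of the two sides match precisely; once this matching is established via a subsequence/diagonal argument, the ratio $i(X_{n},Y_{n})^{2}$ tends to a finite limit that we define to be $i(\xi,\eta)^{2}$, with symmetry $i(\xi,\eta)=i(\eta,\xi)$ built into the formula.
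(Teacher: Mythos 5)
There is a genuine gap, and it sits exactly where you placed the flag. Your first two steps are fine: the identity $K(X_0,Y)\,i(X,Y)=\sup_{\mu}i(X,\mu)/i(Y,\mu)$ does follow from Kerckhoff's formula, and continuity in one variable (fixed $X$, $Y_n\to\xi$) is an immediate consequence of horofunction convergence. But the theorem's entire content is the \emph{joint} continuity as both $X_n\to\eta$ and $Y_n\to\xi$, and at that point you write that "the delicate technical task is to show that the divergences of the two sides match precisely" and that "once this matching is established" the limit exists. That matching is not a technicality to be deferred: each supremum $\sup_{\mu\in\mathcal{E}}i(X_n,\mu)/i(Y_n,\mu)$ blows up near the zero set of $i(\xi,\cdot)$ in $\mathcal{E}$, and uniform convergence of $i(X_n,\cdot)$ and $i(Y_n,\cdot)$ on the compact slice gives no control of a supremum of ratios whose denominators tend to zero; the symmetric product formula only tells you that a divergent quantity equals a product of two suprema, and extracting the precise asymptotics needed to recover $i(X_n,Y_n)^2$ from it is the theorem itself. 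Moreover, "defining" $i(\xi,\eta)$ as the limit obtained from a subsequence/diagonal argument begs the question: the statement asserts that \emph{every} pair of sequences $X_n\to\eta$, $Y_n\to\xi$ yields the same limit, so you must prove independence of the approximating sequences (and consistency with the normalization of $i(\xi,\cdot)$, which is a priori only defined up to a positive constant), none of which is addressed.

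For comparison, the paper does not prove this statement at all: it is quoted as Theorem 4 of Miyachi's paper \cite{Miyachi2014}, whose proof rests on Miyachi's extremal length geometry and Gromov product estimates and is substantially more involved than Kerckhoff's formula plus uniform convergence on a compact slice of $\mathcal{MF}$. If you want a self-contained argument you would essentially have to reproduce that work; otherwise the honest move is to cite Miyachi, as the authors do, rather than present the missing matching-of-divergences step as a routine remainder.
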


	Im particular, we consider $\PMF$ as a subset of the Gardiner-Masur boundary. Then each element in $\PMF$ 
	is represented by a measured foliations $\mu$ such that 
	$\mathrm{Ext}_{X_0}(\mu)=1$. For any $\mu, \nu\in \PMF$,
	$i(\mu, \nu)$ is exactly their geometric intersection number.

	\subsection{Horofunction compactification}
	Fix a base point $X_{0} \in \mathcal{T}(S)$. To each $Z \in \mathcal{T}(S)$ we assigned a function $\Psi_{Z}: \mathcal{T}(S) \rightarrow \mathbb{R}$, given by
	$$
	\Psi_{Z}(X)=d_{\mathcal{T}}(X, Z)-d_{\mathcal{T}}\left(X_{0}, Z\right).
	$$
	Let $C(\mathcal{T}(S))$ be the space of continuous functions on $\mathcal{T}(S)$ endowed with the topology of locally uniformly convergence on $\mathcal{T}(S)$. Then the map
	\begin{eqnarray*}
		\Psi: \mathcal{T}(S) &\rightarrow& C(\mathcal{T}(S)) \\
		Z &\mapsto& \Psi_{Z}(\cdot)
	\end{eqnarray*}
	is an embedding from $\mathcal{T}(S)$ into $C(\mathcal{T}(S))$. The closure $\overline{\Psi(\mathcal{T}(S))}$ of $\Psi(\mathcal{T}(S))$ is compact, which is called the\textit{ horofunction compatification} of $\mathcal{T}(S)$ with Teichm\"uller metric. The \textit{horofunction boundary} is defined to be
	$$
	\partial_{h} \mathcal{T}(S)=\overline{\Psi(\mathcal{T}(S))}-\Psi(\mathcal{T}(S))
	$$
	and its elements are called \textit{horofunctions}.
	
	\begin{remark}
		Note that here the definition of $\partial_{h} \mathcal{T}(S)$ depends on the choice of the base point $X_{0}$. If one changes to an alternative base point $X_{1}$, then the assignment of the new function $\Psi_{Z}^{\prime}$ is related to $\Psi_{Z}$ by $\Psi_{Z}^{\prime}(\cdot)=\Psi_{Z}(\cdot)-\Psi_{Z}\left(X_{1}\right)$. Thus there is a natural identification between $\overline{\Psi(\mathcal{T}(S))}$ and $\overline{\Psi^{\prime}(\mathcal{T}(S))}$,
		and elements in  $\partial_{h} \mathcal{T}(S)$ are defined up to additive constants. We refer to \cite{liu2010horofunction, Walsh2019The} for more detail about horofunction boundary.
	\end{remark}

	Liu and Su \cite{liu2010horofunction} proved that:
	\begin{theorem}\label{thm:LS}
		The Gardiner-Masur compactification of $\TS$ is homeomorphic to the horofunction compactification of  $\left(\mathcal{T}(S), d_{\mathcal{T}} \right)$. For each $\xi\in \partial_{\mathrm{GM}} \T$, 
		the corresponding horofunction is represented  by
		$$
		\Psi_\xi(X):= \log \sup _{\mu \in \mathcal{MF}} \frac{i(\xi, \mu)}{\sqrt{\operatorname{Ext}_{X}(\mu)}}
		-
		\log \sup _{\mu \in \mathcal{MF}} \frac{i(\xi, \mu)}{\sqrt{\operatorname{Ext}_{X_0}(\mu)}}.
		$$
	\end{theorem}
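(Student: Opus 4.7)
The plan is to verify the explicit formula for $\Psi_\xi$ on interior points via Kerckhoff's distance formula, pass to the Gardiner-Masur boundary using the uniform convergence of the intersection function, and then show that the resulting map between the two compactifications is a homeomorphism. For $Z \in \T$ with the normalization $i(Z,\mu)^2 = \mathrm{Ext}_Z(\mu)/K(X_0, Z)$ from Remark~\ref{remark:one}, Kerckhoff's formula gives
$$\sup_{\mu \in \MF\setminus\{0\}} \frac{i(Z,\mu)^2}{\mathrm{Ext}_X(\mu)} = \frac{1}{K(X_0, Z)}\sup_\mu \frac{\mathrm{Ext}_Z(\mu)}{\mathrm{Ext}_X(\mu)} = \frac{K(X, Z)}{K(X_0, Z)},$$
whose $\tfrac{1}{2}\log$ equals $\Psi_Z(X) = d_{\mathcal{T}}(X, Z) - d_{\mathcal{T}}(X_0, Z)$; evaluating at $X = X_0$ shows the corresponding $X_0$-sup is $1$, so its logarithm vanishes and the stated formula holds on $\T$. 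The subtraction of the $X_0$-term is needed at the boundary because Theorem~\ref{Miyachi2008Teichm} determines $i(\xi, \cdot)$ only up to a positive scalar, and the subtraction absorbs such rescaling.

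Next, I would show that if $X_n \to \xi$ in the Gardiner-Masur compactification then $\Psi_{X_n}(X)$ converges, for each fixed $X \in \T$, to the value given by the formula at $\xi$. The key input is Remark~\ref{remark:one}, guaranteeing $i(X_n, \cdot) \to i(\xi, \cdot)$ uniformly on compact subsets of $\MF$. To pass this through the supremum, I exploit the degree-one homogeneity of both $i(\cdot, \mu)$ and $\sqrt{\mathrm{Ext}_X(\mu)}$ in $\mu$, so that the sup reduces to the compact slice $\{\mu \in \MF : \mathrm{Ext}_X(\mu) = 1\}$, on which uniform convergence of $i(X_n, \cdot)$ immediately yields convergence of the sup. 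The same argument handles the $X_0$-term, and since the unit extremal-length sphere varies continuously with $X$, one obtains locally uniform convergence on $\T$.

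Finally, I would promote this to a homeomorphism. Define $\iota(\xi) = \Psi_\xi$ by the formula; it is well-defined because rescaling $i(\xi, \cdot)$ by $c > 0$ shifts both logarithms by $\log c$, which cancels. Continuity of $\iota$ on $\GMT$ follows from the previous step by a diagonal argument, approximating each boundary point by interior Riemann surfaces. Surjectivity is standard: any horofunction is a limit $\Psi_{Z_n}$ of some divergent $Z_n$, and passing to a GM-convergent subsequence $Z_n \to \xi$, the second step identifies the limit with $\iota(\xi)$. For injectivity, one recovers $i(\xi, \alpha)$ from $\Psi_\xi$ up to a global scalar by probing along Teichm\"uller deformations that pinch a given $\alpha \in \mathcal{S}$: the $\alpha$-term eventually dominates the sup and the asymptotics of $\Psi_\xi$ along the pinching path isolate $i(\xi, \alpha)$ modulo a scalar, so Theorem~\ref{Miyachi2008Teichm} then forces $\xi = \eta$ whenever $\Psi_\xi = \Psi_\eta$. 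A continuous bijection between the compact Hausdorff spaces $\GMT$ and $\partial_h \T$ is automatically a homeomorphism.

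The main obstacle is the uniformity in the middle step: Miyachi's convergence of $i(X_n, \cdot)$ is only guaranteed on compact subsets of $\MF$, while the relevant supremum ranges over the entire non-zero cone. The homogeneity reduction to a single compact extremal-length sphere is precisely what bridges this gap; without it one would have to worry about the sup being attained by a sequence $\mu_n$ escaping to infinity in $\MF$.
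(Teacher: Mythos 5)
This theorem is not proved in the paper: it is quoted from Liu--Su \cite{liu2010horofunction}, so your sketch can only be measured against the argument in the literature. Your first two steps are correct and follow the standard route: Kerckhoff's formula identifies $\Psi_Z$ with the stated expression for interior $Z$ (with the normalization of Remark \ref{remark:one} the $X_0$-term is $\log 1=0$), and the homogeneity reduction of the supremum to the compact slice $\{\mu:\operatorname{Ext}_X(\mu)=1\}$, combined with the locally uniform convergence $i(X_n,\cdot)\to i(\xi,\cdot)$, gives $\Psi_{X_n}\to\Psi_\xi$ pointwise, hence locally uniformly (the $1$-Lipschitz property of the $\Psi_{X_n}$ is the quickest way to get local uniformity). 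The well-definedness, surjectivity, and the compact-to-Hausdorff endgame are also fine.

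The genuine gap is injectivity. Writing $\mathcal{E}_\xi(X)=\sup_{\mu\neq 0} i(\xi,\mu)^2/\operatorname{Ext}_X(\mu)$, your claim that along a pinching path $X_t$ for $\alpha$ ``the $\alpha$-term eventually dominates'' amounts to the precise asymptotic $\operatorname{Ext}_{X_t}(\alpha)\,\mathcal{E}_\xi(X_t)\to i(\xi,\alpha)^2$, of which only the lower bound $\geq i(\xi,\alpha)^2$ is immediate. For the upper bound, Minsky's inequality controls a competitor $\mu$ only through $\operatorname{Ext}_{X_t}(\mu)\geq i(\mu,\alpha)^2/\operatorname{Ext}_{X_t}(\alpha)$, which degenerates as $i(\mu,\alpha)\to 0$: maximizing sequences can drift toward foliations nearly disjoint from $\alpha$, and there is no compact slice that works uniformly in $t$ --- precisely the escape-to-infinity issue you flag in step 2, now along a degenerating family where it cannot be dismissed. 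The other available bound, $\operatorname{Ext}_{X_t}(\mu)\geq e^{-2t}\operatorname{Ext}_{X_0}(\mu)$ along a ray, only yields $\mathcal{E}_\xi(X_t)\leq e^{2t}\mathcal{E}_\xi(X_0)$, which has the right order but not the right constant; recovering $i(\xi,\alpha)$ only up to bounded multiplicative error is not enough to conclude $i(\xi,\cdot)=c\,i(\eta,\cdot)$ from $\Psi_\xi=\Psi_\eta$. This asymptotic lemma along Jenkins--Strebel rays is the analytic heart of the Liu--Su proof (building on Miyachi's ray asymptotics) and cannot be waved through. Within the present paper you could instead close the gap by quoting Theorem \ref{thm:Miyachi}: from $\Psi_\xi$ one recovers $i(\xi,Y)=e^{\Psi_\xi(Y)-d_{\mathcal{T}}(X_0,Y)}$ for $Y\in\TS$, and letting $Y\to[\alpha]\in\PMF$ and using continuity of the extended pairing recovers $i(\xi,\alpha)$, hence $\xi$ by Theorem \ref{Miyachi2008Teichm}; but be aware that you are then invoking a result whose proof contains exactly the asymptotics your sketch omits.
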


	\section{A pair of Busemann points that fill up the surface}
	
	In this section, we give a direct proof of Theorem \ref{thm:Busemann},
	independent of Theorem \ref{thm:horofunction}.

	As an immediate corollary of Theorem \ref{thm:LS},
	every Teichm\"uller geodesic ray has a unique limit on the Gardiner-Masur boundary. 
	The limit point of a geodesic ray is called
	a \emph{Busemann point}.

	Let $\mathbf{R}_{q}: [0, \infty) \rightarrow \mathcal{T}(S)$ be
	the Teichm\"uller geodesic ray determined by $q \in Q^1(X)$.
	Let  $F_v, F_h\in \mathcal{MF}$ be  the vertical and horizontal foliations of $q$.
	Denote the limit of $\mathbf{R}_{q}$
	in $\partial_{\mathrm{GM}}\mathcal{T}(S)$ by $\xi$.

	With the above notations, we have (see Walsh \cite[Theorem 1]{Walsh2019The}) 
	\begin{theorem}\label{Walsh2012Corollary1.2} Let $\sum\limits_{i=1}^{k} F_i$ be the ergodic decomposition
		of $F_v$. Then the Busemann point is represented by 
		
		$$ i(\xi,\mu) =
		\Big\{ \sum_{i=1}^{k}\frac{i({F}_{i},\mu)^{2}}
		{i({F}_{i}, {F}_{h})}
		\Big\}^{1/2}, \  \mu\in\MF.$$
	\end{theorem}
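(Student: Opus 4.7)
The plan is to derive the formula directly from Miyachi's intersection number characterization combined with a precise asymptotic for extremal length along the Teichm\"uller geodesic ray. Set $X_t := \mathbf{R}_q(t)$. Since $X_t \to \xi$ in the Gardiner--Masur compactification and $K(X_0, X_t) = e^{2t}$ along the ray, Remark~\ref{remark:one} together with Theorem~\ref{thm:Miyachi} gives
$$
i(\xi, \mu) \;=\; \lim_{t\to\infty} \sqrt{e^{-2t}\,\Ext_{X_t}(\mu)}, \qquad \mu \in \MF.
$$
The theorem thus reduces to establishing the sharp asymptotic
$$
\lim_{t\to\infty} e^{-2t}\,\Ext_{X_t}(\mu) \;=\; \sum_{i=1}^{k} \frac{i(F_i, \mu)^2}{i(F_i, F_h)}.
$$

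To verify this, I would first check consistency on the distinguished foliations. For $\mu = F_h$, one has $\Ext_{X_t}(F_h) = e^{2t}\|q\|$, so the left-hand side equals $\|q\| = i(F_v, F_h) = \sum_i i(F_i, F_h)$, matching the right-hand side. For $\mu = F_v$, $\Ext_{X_t}(F_v) = e^{-2t}\|q\|$ gives zero on the left; the right-hand side also vanishes because $i(F_j, F_v) = 0$ by vanishing of self-intersection and disjointness of distinct ergodic components. For general $\mu$, I would decompose the flat surface $(X_0, q)$ into invariant subsurfaces $S_1, \dots, S_k$ carrying the ergodic components $F_1, \dots, F_k$ of $F_v$. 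On each $S_i$ the Teichm\"uller deformation acts via the restricted quadratic differential, and a Jenkins--Strebel style computation (modelling near-extremal cylinders by horizontal families) gives an extremal length contribution asymptotic to $e^{2t}\, i(F_i,\mu)^2/i(F_i, F_h)$. The additivity of these contributions across the disjoint subsurfaces, modulo lower-order terms from the boundary cylinders connecting them, yields the claim.

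The main technical obstacle is the lower bound in this asymptotic. Upper bounds follow relatively directly from test conformal metrics built from the flat $|q_t|$-metric on each $S_i$. For the matching lower bound, I would combine Minsky's inequality $\Ext_{X_t}(\mu)\,\Ext_{X_t}(F_i) \ge i(F_i, \mu)^2$ on each invariant subsurface with the sharp value $\Ext_{X_t}(F_i) \sim e^{-2t}\, i(F_i, F_h)$ coming from the Jenkins--Strebel decomposition. The delicate point when $F_v$ has several ergodic components is to show that near-extremal metrics for $\Ext_{X_t}(\mu)$ concentrate on the union of the invariant subsurfaces, which amounts to a product region type estimate along the Teichm\"uller flow. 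Once this is in place, taking square roots and invoking the uniqueness clause of Theorem~\ref{Miyachi2008Teichm} to identify the representative yields the stated formula.
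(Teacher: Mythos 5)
The paper does not actually prove this statement: it is quoted directly from Walsh \cite[Theorem 1]{Walsh2019The}, so your attempt amounts to reproving Walsh's theorem. Your reduction is fine as far as it goes: using $K(X_0,X_t)=e^{2t}$ along the ray, Remark \ref{remark:one} does give $i(\xi,\mu)=\lim_{t\to\infty}\sqrt{e^{-2t}\Ext_{X_t}(\mu)}$, the consistency checks at $\mu=F_h$ and $\mu=F_v$ are correct, and the remaining task --- the sharp asymptotic $\lim_{t\to\infty}e^{-2t}\Ext_{X_t}(\mu)=\sum_i i(F_i,\mu)^2/i(F_i,F_h)$ --- is exactly the content of the theorem.

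The genuine gap is the step where you ``decompose the flat surface $(X_0,q)$ into invariant subsurfaces $S_1,\dots,S_k$ carrying the ergodic components.'' The ergodic decomposition of $F_v$ is a decomposition of the transverse \emph{measure}, not of the underlying foliation: when a minimal component of $F_v$ fails to be uniquely ergodic, the ergodic measures $F_1,\dots,F_k$ are mutually singular but have the same topological support, so there are no disjoint invariant subsurfaces, no ``boundary cylinders connecting them,'' and neither the per-subsurface Jenkins--Strebel computation nor the per-subsurface Minsky inequality can even be formulated. The situation in which the components do sit on disjoint subsurfaces is the comparatively easy, previously known case (Miyachi's earlier results on Jenkins--Strebel-type rays); the entire difficulty of Walsh's theorem is the minimal non-uniquely-ergodic case, which your scheme cannot reach. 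Concretely, applying Minsky's inequality on the whole surface for each $i$ yields only $\liminf_{t\to\infty}e^{-2t}\Ext_{X_t}(\mu)\ \geq\ \max_i\, i(F_i,\mu)^2/i(F_i,F_h)$, not the sum, and the auxiliary estimate $\Ext_{X_t}(F_i)\sim e^{-2t}\,i(F_i,F_h)$ that you invoke is itself an unproved claim of essentially the same depth as the theorem (note that the formula being proved, evaluated at $\mu=F_i$, only says $e^{-2t}\Ext_{X_t}(F_i)\to 0$, since $i(F_j,F_i)=0$ for all $j$). So the proposal does not close; the lower bound with the full sum over ergodic components must be obtained, as in Walsh's argument, without any appeal to a subsurface decomposition.
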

	
	\bigskip
	
	\begin{proof}[Proof of Theorem \ref{thm:Busemann}]
		Let $\xi, \eta \in \partial_{\mathrm{GM}}\mathcal{T}(S)$ be a pair of Busemann points that fill up the surface.
		
		First, we observe that any possible geodesic need to satisfy a system of equations.
		Assume that a pair of measured foliations ${F}_{+}, {F}_{-}$ fill up the surface, and they	determine a Teichm\"uller geodesic  $\mathbf{G}_{q}$
		with 
		$$\lim_{t\to +\infty} \mathbf{G}_{q}(t) =\xi, \lim_{t\to -\infty} \mathbf{G}_{q}(t) =\eta.$$

		Using Theorem \ref{Walsh2012Corollary1.2}, we can write 
		\begin{equation*}
			i(\xi, \cdot) =
			\sqrt{\sum_{i=1}^{k}  i(\mu_{i},\cdot)^{2}}
			\ \ \ \mathrm{and}\ \ \
			i(\eta, \cdot) =
			\sqrt{\sum_{j=1}^{l}  i(\nu_{j},\cdot)^{2}}.
		\end{equation*}
		Then the foliations ${F}_{+},{F}_{-}$ have ergodic decompositions 
		\begin{equation*}
			{F}_- = \sum_{i=1}^{k} x_i \mu_i\ \ \
			\mathrm{and}\ \ \
		{F}_+ = \sum_{j=1}^{l} y_j \nu_j,
		\end{equation*}
		where the unknown coefficients $x_i>0, y_j>0$ and they must satisfy the equations
		\begin{equation*}
			i(\mu_i, \cdot)^2
			= \frac{x_i i(\mu_i, \cdot)^2}{i(\mu_i,{F}_+)}\ \ \
			\mathrm{and}\ \ \
			i(v_j, \cdot)^2
			= \frac{y_j i(\nu_j, \cdot)^2}{i(\nu_j, {F}_-)},
		\end{equation*}
		i.e.,
		\begin{equation*}
			x_i
			= {i(\mu_i, {F}_+)}\ \ \
			\mathrm{and}\ \ \
			y_j
			= {i(\nu_j, {F}_-)}.
		\end{equation*}
		
		Thus we obtain a system of linear equations:
		\begin{equation}\label{equ:system}
			\left\{
			\begin{array}{lr}
				x_i =  i(\mu_i, {F}_+) =  \sum_{j=1}^{l} y_j i(\mu_i, \nu_j) =  \sum_{j=1}^{l} y_j n_{ij} \\
				\\
				y_j = i(\nu_j, {F}_-) = \sum_{i=1}^{k} x_i i(\nu_j, \mu_i) = \sum_{i=1}^{k} x_i n_{ij}
			\end{array}
			\right.
		\end{equation}
		where $1 \leq i \leq k$, $1 \leq j \leq l$ and  $n_{ij} = i(\mu_i, \nu_j)$.
		Denote the matrix of intersection by
		
		$$N=\left( n_{ij} \right)_{1\leq i\leq k, 1\leq j\leq l}.$$
		
		Now we
		can reduce the construction of Teichm\"uller geodesic to 
		the problem of  finding  a vector $\mathbf{x}=(x_1, \cdots, x_k)$ such that
		$$\mathbf{x}= N N^* \ \mathbf{x}, \ \mathrm{where} \  \mathbf{x}>0.$$
		(All the entries $x_i$ are positive.)
		
			In general, the solution does not exist. However, note that the representation of
		a Busemann point is defined up to multiplication by a positive constant.
		Thus it remains to show that the matrix $T=N N^*$ has a positive eigenvector.	This is trivial when $k=1$. Thus we assume that $k\geq 2$.

			To see this, we observe that the matrix $T$ is non-negative (all the entries
		of $T$ are non-negative), symmetric and positive semi-definite. Thus all the eigenvalues of $T$ are non-negative. Denote the maximal eigenvalue of $T$
		by $\lambda$. Then $\lambda>0$ (otherwise, $T\equiv 0$).

		Now we use the assumption that $F_+$ and $ F_-$ fill up the surface to show that the matrix $T$
		is primitive, that is, some power of $T$ is positive.
		
	In the case that  ${F}_-$ and ${F}_+$ are rational,
		i.e., all the components are weighted simple closed curves,
	$T$ is primitive if and only if the union of the support of ${F}_-$ and ${F}_+$
		is a connected graph on the surface  (see Thurston \cite[Section 6]{thurston1988geometry}
		for more precise description).
		Here, the  connectedness is confirmed by filling condition.
		
		In general, we can use a theorem of Lezhen-Masur \cite[Theorem C]{lenzhen2010criteria} to approximate the foliations $\sum_{i=1}^k \mu_i$
		and $\sum_{j=1}^l \nu_j$ by a pair of rational measured foliations
		$\alpha=\sum_{i=1}^k c_i \alpha_i$ and $\beta=\sum_{j=1}^l d_j \beta_j$,
		which satisfy the following properties:
		\begin{itemize}
			\item  $\alpha$ and $\beta$ fill up the surface.
			\item  $i(\mu_i,\mu_j)=0$ if and only if $i(\alpha_i,\beta_j)=0$.
		\end{itemize}
		By a similar argument, we can show that the matrix $T$ is primitive.
		
		We can now proceed by using the Perron-Frobenius theorem to conclude that there exists some positive vector $\mathbf{x}> 0$
		such that $T \mathbf{x}=\lambda \mathbf{x}$. Moreover, $\lambda$ is the only eigenvalue of $T$
		with a non-negative eigenvector and $\lambda$ is a simple eigenvalue.
		
		By our discussion, 
		the existence of $\mathbf{x}$ implies the existence of $\mathbf{G}_{q}$,
		and the uniqueness of the geodesic follows from the uniqueness of $\mathbf{x}$
		(up to a positive constant). This completes the proof.
	\end{proof}

	We obtain an analogue of Theorem  \ref{thm:geodesic}.

	\begin{corollary}
		Let $\xi, \eta \in \partial_{\mathrm{GM}}\T$ be two Busemann points. Then there is a Teichm\"uller geodesic  $\mathbf{G}(\cdot): \mathbb{R} \rightarrow \mathcal{T}(S)$ such that
		\begin{equation*}
			\lim_{t \rightarrow +\infty}\mathbf{G}(t) = \xi,\ \ \lim_{t \rightarrow - \infty}\mathbf{G}(t) = \eta
		\end{equation*}
		if and only if
		$\xi,\eta$ 
		fill up the surface.
		Moreover, the geodesic is unique. 
	\end{corollary}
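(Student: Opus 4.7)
The plan is to derive the corollary directly from Theorem~\ref{thm:Busemann} together with Walsh's explicit representation in Theorem~\ref{Walsh2012Corollary1.2}. The forward (``if'') implication, along with uniqueness of the connecting geodesic, is \emph{precisely} the content of Theorem~\ref{thm:Busemann}, so no additional work is needed there. The task therefore reduces to the converse: if $\xi$ and $\eta$ arise as the forward and backward Busemann limits of some bi-infinite Teichm\"uller geodesic, then they must fill up the surface.

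For the converse, I would take such a geodesic $\mathbf{G}_q(t)$ and denote by $F_v, F_h\in\MF$ the vertical and horizontal foliations of the defining quadratic differential $q$. These fill up the surface by construction. Applying Theorem~\ref{Walsh2012Corollary1.2} to the forward ray, and symmetrically to the reverse ray (interchanging the roles of vertical and horizontal), I obtain
\[
i(\xi,\mu)^{2} = \sum_{i} \frac{i(F_i,\mu)^{2}}{i(F_i, F_h)}, \qquad i(\eta,\mu)^{2} = \sum_{j} \frac{i(G_j,\mu)^{2}}{i(G_j, F_v)},
\]
where $F_v = \sum_i F_i$ and $F_h = \sum_j G_j$ are the ergodic decompositions. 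Every denominator is strictly positive because $F_v$ and $F_h$ fill up the surface.

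The key step is then a vanishing test driven by positivity: from the displayed formulas, $i(\xi,\mu) = 0$ if and only if $i(F_i,\mu) = 0$ for every $i$, which is in turn equivalent to $i(F_v,\mu) = 0$; analogously $i(\eta,\mu) = 0 \iff i(F_h,\mu) = 0$. The filling property of the pair $(F_v, F_h)$ thus forces $i(\xi,\mu) + i(\eta,\mu) > 0$ for every $\mu \in \MF\setminus\{0\}$, which is exactly the statement that $\xi$ and $\eta$ fill up the surface.

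I do not foresee any serious obstacle: the argument is essentially bookkeeping with Walsh's formula. The only point requiring a moment's care is the vanishing equivalence above, which relies on positivity of both the ergodic coefficients and the cross-intersection numbers $i(F_i, F_h)$ and $i(G_j, F_v)$ guaranteed by the fact that $F_v$ and $F_h$ fill.
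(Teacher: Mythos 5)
Your proposal is correct and follows exactly the route the paper intends: the ``if'' direction and uniqueness are Theorem~\ref{thm:Busemann}, and the converse is read off from Walsh's formula (Theorem~\ref{Walsh2012Corollary1.2}) applied to the forward and reversed rays, since the vanishing sets of $i(\xi,\cdot)$ and $i(\eta,\cdot)$ coincide with those of $i(F_v,\cdot)$ and $i(F_h,\cdot)$, which fill. The only micro-step worth recording is why $i(F_i,F_h)>0$: each ergodic component satisfies $i(F_i,F_v)=0$, so the filling of the pair $(F_v,F_h)$ (equivalently, positivity of the $|q|$-length of $F_i$) forces $i(F_i,F_h)>0$, exactly as the paper notes elsewhere.
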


	\section{Optimal geodesics for horofunctions}
	
	In this section we prove Theorem \ref{thm:horofunction}.
	We  shall identify
	$\partial_{\mathrm{GM}} \mathcal{T}(S)$ with $\partial _h \mathcal{T}(S)$ through the homeomorphism $\Psi$ (see Theorem \ref{thm:LS} for definition). 
	
	Recall that
	a Teichm\"uller geodesic $\mathbf{G}(t)$ is optimal for $\xi \in \GMT$ if,
	for all $t \in \mathbb{R}$, 
	$$
	\Psi_\xi(\mathbf{G}(t))-\Psi_\xi(\mathbf{G}(0))=-t.
	$$
	In particular, the geodesic $\mathbf{G}$
	is optimal for its limited Busemann point
	$\lim\limits_{t\to +\infty} \mathbf{G}(t)$.
	
	The existence of optimal geodesic for a single horofunction is proved by
	Azemar \cite[Proposition 3.3]{Azemar2021qualitative}.
	
	\begin{lemma}[Azemar]\label{lem-exist}
		Fix $X_0\in \TS $. For each $\xi \in \GMT$, there is  a unique optimal geodesic $\mathbf{G}(t)$ for $\xi$
		with $\mathbf{G}(0)=X_0$. 
		Moreover, let $\{X_{n}\} \subset \mathcal{T}(S)$ be any sequence that converges to $\xi$.
		Let $\mathbf{G}_n$ be the Teichm\"uller geodesic associated to
		 $q_n\in Q^1(X_0)$ such that $\mathbf{G}_n$ passing through
		$X_n$ in the positive direction. Then $q_n$ converges to some $q\in Q^1(X_0)$
		and  the Teichm\"uller geodesic $\mathbf{G}=\mathbf{G}_q$          is optimal  for $\xi$.
	\end{lemma}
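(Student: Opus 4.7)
The plan is to prove the statement in two complementary parts: existence via a compactness argument in $Q^1(X_0)$, and uniqueness by analyzing the horofunction formula of Theorem \ref{thm:LS}.

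I would start with the ``moreover'' part and extract the optimal geodesic from the sequence. Given any sequence $X_n \to \xi$ in $\TS$, let $q_n \in Q^1(X_0)$ be the unit-area initial quadratic differential of the Teichm\"uller geodesic $\mathbf{G}_n$ from $X_0$ to $X_n$. Since $Q^1(X_0)$ is a compact (finite-dimensional) sphere, pass to a convergent subsequence $q_n \to q$. By continuous dependence of the Teichm\"uller flow on initial data, the geodesics satisfy $\mathbf{G}_{q_n}(t) \to \mathbf{G}_q(t)$ locally uniformly in $t \in \mathbb{R}$. Next I exploit the geodesic identity: for every fixed $t \in \mathbb{R}$ and every $n$ large enough that $|t| \leq d_{\mathcal{T}}(X_0, X_n)$ (which holds eventually since $d_{\mathcal{T}}(X_0, X_n) \to \infty$ as $X_n \to \xi \in \GMT$), the three points $X_0$, $\mathbf{G}_n(t)$, and $X_n$ lie on a common Teichm\"uller geodesic in the correct order, so
$$\Psi_{X_n}(\mathbf{G}_n(t)) = d_{\mathcal{T}}(\mathbf{G}_n(t), X_n) - d_{\mathcal{T}}(X_0, X_n) = -t.$$
Theorem \ref{thm:LS} ensures the horofunctions $\Psi_{X_n}$ converge locally uniformly on $\TS$ to $\Psi_\xi$, so passing to the limit along the subsequence gives $\Psi_\xi(\mathbf{G}_q(t)) = -t$ for every $t \in \mathbb{R}$. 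This produces an optimal geodesic.

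For uniqueness, suppose two distinct limits $q, q' \in Q^1(X_0)$ can arise, yielding two optimal geodesics $\mathbf{G}_q$ and $\mathbf{G}_{q'}$ through $X_0$. By Theorem \ref{thm:LS}, optimality of $\mathbf{G}_q$ is equivalent to the identity
$$\sup_{\mu \in \MF} \frac{i(\xi, \mu)}{\sqrt{\Ext_{\mathbf{G}_q(t)}(\mu)}} = K e^{-t}, \qquad t \in \mathbb{R},$$
where $K = \sup_\mu i(\xi, \mu)/\sqrt{\Ext_{X_0}(\mu)}$. Along the Teichm\"uller geodesic, $\Ext_{\mathbf{G}_q(t)}(F_v(q)) = e^{-2t}\,\Ext_{X_0}(F_v(q))$ and $\Ext_{\mathbf{G}_q(t)}(F_h(q)) = e^{2t}\,\Ext_{X_0}(F_h(q))$, while Minsky's inequality controls $\Ext_{\mathbf{G}_q(t)}(\mu)$ from below in terms of $i(F_v(q), \mu)$ and $i(F_h(q), \mu)$. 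Forcing the correct $K e^{-t}$ asymptotics as $t \to +\infty$ (respectively $t \to -\infty$) translates into rigid constraints on $F_v(q)$ (respectively $F_h(q)$) in terms of $i(\xi, \cdot)$; because any pair $(F_v(q), F_h(q))$ filling up the surface determines $q$ (Theorem \ref{thm:geodesic}), these two constraints pin down $q$ uniquely. Consequently $q = q'$, giving uniqueness of both the optimal geodesic and the limit of the sequence $(q_n)$ (any subsequential limit must equal this unique $q$).

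The main obstacle is the uniqueness step. The compactness-and-limit argument of the first part is essentially routine once one notices the key identity $\Psi_{X_n}(\mathbf{G}_n(t)) = -t$ along the segment. The difficulty lies in the absence of a closed form for $\Psi_\xi$: it is defined only as a supremum over $\MF$, so extracting enough information from the optimality identity to pin down $q$ requires a careful balance of the growth behavior of extremal lengths along the geodesic in both time directions, using Minsky's inequality and the normalization of $i(\xi, \cdot)$ discussed in Remark \ref{remark:one}.
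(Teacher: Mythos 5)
The paper does not actually prove this lemma: it is imported verbatim from Azemar \cite{Azemar2021qualitative} (his Proposition 3.3), so your proposal can only be measured against what a complete argument requires. Your existence half is fine and is the routine part: compactness of $Q^1(X_0)$, continuity of $(q,t)\mapsto \mathbf{G}_q(t)$, the identity $\Psi_{X_n}(\mathbf{G}_n(t))=-t$ for $t\le d_{\mathcal{T}}(X_0,X_n)$, and the locally uniform convergence $\Psi_{X_n}\to\Psi_\xi$ coming from Theorem \ref{thm:LS} do produce, along a subsequence $q_n\to q$, a geodesic with $\Psi_\xi(\mathbf{G}_q(t))=-t$ for all $t$; and convergence of the full sequence $q_n$ would indeed follow once uniqueness of the optimal geodesic through $X_0$ is known, since a geodesic through $X_0$ determines its initial quadratic differential.

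The genuine gap is the uniqueness step, which you identify as the main obstacle and then only assert. You never write down the ``rigid constraints'' on $F_v(q)$ and $F_h(q)$ supposedly forced by the two-sided asymptotics, and the tools you name cannot produce them: Minsky's inequality bounds $\operatorname{Ext}_{\mathbf{G}_q(t)}(\mu)$ only from below, hence bounds $i(\xi,\mu)/\sqrt{\operatorname{Ext}_{\mathbf{G}_q(t)}(\mu)}$ only from above, so it cannot force the exact equality $\sup_\mu i(\xi,\mu)/\sqrt{\operatorname{Ext}_{\mathbf{G}_q(t)}(\mu)}=Ke^{-t}$ to pin anything down; and for a non-Busemann $\xi$ there is no explicit formula to compare against --- the paper's Proposition \ref{prop_low-bound} yields only the one-sided inequality $i(F,\mu)\le i(\xi,\mu)$, which does not determine $F$. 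The concluding mechanism is also misdirected: since the basepoint $X_0$ is fixed, $q\in Q^1(X_0)$ is already determined (Hubbard--Masur) by the projective class of its vertical foliation alone, so invoking Theorem \ref{thm:geodesic} (which reconstructs both the surface and the differential from a filling pair) is not the missing ingredient; what must be proved, and is not, is that two geodesics through $X_0$ that are both optimal for $\xi$ have projectively equal vertical foliations (equivalently the same initial covector). Finally, the claim that the $t\to-\infty$ asymptotics constrain $F_h(q)$ ``in terms of $i(\xi,\cdot)$'' cannot be right as stated: already for Busemann points the horizontal foliation of the optimal geodesic depends on the choice of $X_0$ as well as on $\xi$, so it is not a function of $i(\xi,\cdot)$ alone. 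As it stands, the uniqueness assertion (and hence the convergence of the whole sequence $q_n$) remains unproved and should either be supplied with a genuine argument or cited from Azemar.
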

	
	Now we have a pair of boundary points $\xi$
	and $\eta$ that fill up the surface. 
	The main idea to prove the existence in Theorem \ref{thm:horofunction} is as following. 
	Given $X_0\in \TS$, we have geodesics 
	$\mathbf{G}_1$ and $\mathbf{G}_2$, passing through $X_0$, which is optimal for $\xi$
	and $\eta$, respectively. 
	If  the geodesics $\mathbf{G}_1$ and $\mathbf{G}_2$ overlap, then it
	is the optimal geodesic that we want.

	\begin{figure}[htb]
		\begin{tikzpicture}
			\draw[thick] (0,0.1) -- (5,1);
			\draw [dashed] (0,0.1) -- (-5,-0.9); 
			\draw[thick] (0, 0.1) -- (-5,-2);
			\draw [dashed] (0,0.1) -- (5,1.9);
			\node[blue] at (0,0.1) {$\bullet$ };
			\node at (0,-0.3) {$X_0$};
			\node at (4,0.4) {$\mathbf{G}_1$};
			\node at (-4,-2) {$\mathbf{G}_2$};
		\end{tikzpicture}\caption{Assume that $X_0=\mathbf{G}_1(0)=\mathbf{G}_2(0)$. The geodesics $\mathbf{G}_1$ and $\mathbf{G}_2$ may not overlap. } 
		\label{fig_01}
	\end{figure}
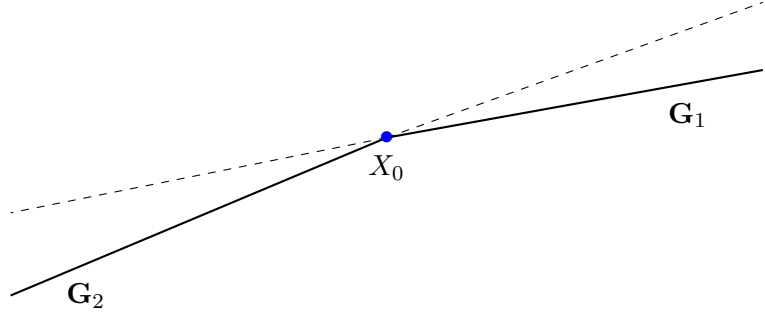

Unfortunately,  if the point $X_0$ is not correctly chosen, 
	then  $\mathbf{G}_1$ and $\mathbf{G}_2$ may not overlap.
	The key to the proof is that $X_0$ must be an intersection point of two horospheres that are tangent to each other. 
	
	Horospheres are level sets of horofunctions. Some of their properties are investigated in
	\cite{Su2018Horospheres}.

	\subsection{Upper and lower bounds for horofunctions.}
	
	By Lemma \ref{lem-exist}, for each horofunction $\xi \in \GMT$,
	we can  associate  with an optimal geodesic $\mathbf{G}=\mathbf{G}_q$ with $\mathbf{G}(0)=X_0$. We shall denote the
	limited Busemann function (in the positive direction) of $\mathbf{G}$ by
	$B_{\xi}$.

	The Busemann function $B_\xi$ provides an upper bound
	for the horofunction corresponding to $\xi$. 
	The next lemma was proved by  Azemar \cite[Proposition 3.9]{Azemar2021qualitative}. See also \cite[Lemma 3.10]{LiuShi}.
	
	\begin{lemma}\label{Lem:upper_bounded}
		For every $X \in \mathcal{T}(S)$,  we have
	$\Psi_\xi(X) \leq B_{\xi}(X).
$
	\end{lemma}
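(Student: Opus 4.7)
The plan is to exploit two properties simultaneously: (i) every horofunction is $1$-Lipschitz with respect to $d_{\mathcal{T}}$, and (ii) along the optimal geodesic $\mathbf{G}$ the value of $\Psi_{\xi}$ decreases at the maximum possible rate. Combining these, the bound $\Psi_{\xi}(X) \leq B_{\xi}(X)$ follows by comparing $\Psi_{\xi}(X)$ to its value at $\mathbf{G}(t)$ and letting $t \to \infty$.

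First I would record the $1$-Lipschitz property: for any $Z \in \mathcal{T}(S)$ the function $\Psi_Z(\cdot) = d_{\mathcal{T}}(\cdot, Z) - d_{\mathcal{T}}(X_0, Z)$ satisfies $|\Psi_Z(X) - \Psi_Z(Y)| \leq d_{\mathcal{T}}(X,Y)$ by the reverse triangle inequality, and this inequality is preserved under locally uniform limits, so it passes to the horofunction $\Psi_{\xi}$. In particular,
\begin{equation*}
    \Psi_{\xi}(X) - \Psi_{\xi}(\mathbf{G}(t)) \leq d_{\mathcal{T}}(X, \mathbf{G}(t))
\end{equation*}
for every $t \in \mathbb{R}$ and every $X \in \mathcal{T}(S)$.

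Next I would invoke optimality. Since $\mathbf{G}$ is optimal for $\xi$ and $\mathbf{G}(0) = X_0$ (so $\Psi_{\xi}(\mathbf{G}(0)) = 0$), the definition gives $\Psi_{\xi}(\mathbf{G}(t)) = -t$. Substituting into the Lipschitz inequality yields
\begin{equation*}
    \Psi_{\xi}(X) \leq d_{\mathcal{T}}(X, \mathbf{G}(t)) - t = d_{\mathcal{T}}(X, \mathbf{G}(t)) - d_{\mathcal{T}}(X_0, \mathbf{G}(t)).
\end{equation*}
The right-hand side is exactly $\Psi_{\mathbf{G}(t)}(X)$, and by the very definition of the Busemann function associated with the ray $t \mapsto \mathbf{G}(t)$, its limit as $t \to +\infty$ is $B_{\xi}(X)$. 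Letting $t \to +\infty$ then gives $\Psi_{\xi}(X) \leq B_{\xi}(X)$, as desired.

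There is no real obstacle here beyond making sure that the Busemann function along the optimal ray is well defined and equals the pointwise limit $\lim_{t\to+\infty}\Psi_{\mathbf{G}(t)}(\cdot)$; this is standard, since the function $t \mapsto d_{\mathcal{T}}(X,\mathbf{G}(t)) - t$ is nonincreasing in $t$ (triangle inequality) and bounded below by $-d_{\mathcal{T}}(X, X_0)$, so the limit exists. The whole argument is completely general: it uses nothing about Teichmüller geometry beyond the metric structure and the definitions of horofunction and optimality, which is why this simple comparison can serve as the uniform upper bound needed later in the paper.
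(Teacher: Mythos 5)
Your argument is correct. Note that the paper does not prove this lemma itself; it simply cites Azemar (Proposition 3.9) and Liu--Shi (Lemma 3.10), so your write-up supplies a self-contained proof along the standard lines: horofunctions are $1$-Lipschitz as locally uniform limits of the $1$-Lipschitz functions $\Psi_Z$, optimality gives $\Psi_\xi(\mathbf{G}(t))=-t$ since $\Psi_\xi(X_0)=0$, hence $\Psi_\xi(X)\leq d_{\mathcal{T}}(X,\mathbf{G}(t))-t=\Psi_{\mathbf{G}(t)}(X)$ for $t\geq 0$, and the right-hand side converges to $B_\xi(X)$ because the ray converges to its Busemann point in the horofunction compactification (your monotonicity remark gives the same limit directly). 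The only cosmetic point is that the identity $t=d_{\mathcal{T}}(X_0,\mathbf{G}(t))$ holds for $t\geq 0$ only, which is all you use since you let $t\to+\infty$; with that restriction stated, the proof is complete and uses nothing beyond the metric structure, exactly as you claim.
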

	
	Let $F=F_v(q)$ be  the vertical measured foliation of $q$.
	We consider the projective class of $F$ as an element of the Gardiner-Masur boundary. The corresponding horofunction will be denoted by
	$\Psi_{F}$. By definition, 
	
	$$\Psi_{F}(X)=\log \sup _{\mu \in \mathcal{MF}} \frac{i({F},\mu)}{\sqrt{\operatorname{Ext}_{X}(\mu)}}
	-
	\log \sup _{\mu \in \mathcal{MF}} \frac{i({F},\mu)}{\sqrt{\operatorname{Ext}_{X_0}(\mu)}}.
	$$
	
	\begin{lemma}\label{lem_up_bound}
		With the above notation, for all $X \in \mathcal{T}(S)$, we have
		$$\Psi_{{F}}(X)\leq \Psi_\xi(X). $$
	\end{lemma}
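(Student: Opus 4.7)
The plan is to approximate $\Psi_\xi$ by $\Psi_{X_n}$ for a sequence $X_n \to \xi$ and, for each $n$, to bound $\Psi_{X_n}$ from below by using Kerckhoff's distance formula with the test foliation $F_v(q_n)$. The crucial observation is that this particular test foliation realizes the Kerckhoff supremum for $d_\mathcal{T}(X_0,X_n)$, so one of the two Kerckhoff inequalities becomes an equality and the $\mathrm{Ext}_{X_n}$-terms cancel after subtraction.

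First I would pick any sequence $X_n \to \xi$ in $\GMT$. By Theorem \ref{thm:LS}, this is the same as convergence in the horofunction compactification, and hence
\[
\Psi_\xi(X) \;=\; \lim_{n\to\infty}\bigl(d_\mathcal{T}(X,X_n) - d_\mathcal{T}(X_0,X_n)\bigr),\qquad X\in\TS.
\]
Lemma \ref{lem-exist} guarantees that the initial quadratic differentials $q_n\in Q^1(X_0)$ of the Teichmüller maps $X_0\to X_n$ converge to the differential $q$ defining the optimal geodesic $\mathbf{G}$; in particular $F_v(q_n) \to F$ in $\MF$.

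Next, using $F_v(q_n)$ as a test foliation in Kerckhoff's formula,
\[
d_\mathcal{T}(X,X_n) \;\geq\; \tfrac{1}{2}\log\frac{\Ext_X(F_v(q_n))}{\Ext_{X_n}(F_v(q_n))}.
\]
For the pair $(X_0,X_n)$, the Teichmüller geodesic from $X_0$ is determined by $q_n$ itself, so the vertical extremal length contracts along this ray as
\[
\Ext_{X_n}(F_v(q_n)) \;=\; e^{-2d_n}\Ext_{X_0}(F_v(q_n)), \qquad d_n := d_\mathcal{T}(X_0,X_n).
\]
Thus $F_v(q_n)$ attains the Kerckhoff supremum for $(X_0,X_n)$ and
\[
d_\mathcal{T}(X_0,X_n) \;=\; \tfrac{1}{2}\log\frac{\Ext_{X_0}(F_v(q_n))}{\Ext_{X_n}(F_v(q_n))}.
\]
Subtracting cancels the $\Ext_{X_n}$ factor and produces
\[
d_\mathcal{T}(X,X_n) - d_\mathcal{T}(X_0,X_n) \;\geq\; \tfrac{1}{2}\log\frac{\Ext_X(F_v(q_n))}{\Ext_{X_0}(F_v(q_n))}.
\]

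Finally I would pass to the limit $n\to\infty$. Since extremal length extends continuously to $\MF$ (Kerckhoff) and $F_v(q_n)\to F$, the right-hand side converges to $\tfrac{1}{2}\log\bigl(\Ext_X(F)/\Ext_{X_0}(F)\bigr) = \Psi_F(X)$, while the left-hand side converges to $\Psi_\xi(X)$. This gives $\Psi_F(X) \leq \Psi_\xi(X)$. The only delicate point is the identification of $F_v(q_n)$ as the extremal foliation for $d_\mathcal{T}(X_0,X_n)$ in Kerckhoff's formula; this is a standard feature of Teichmüller geodesics and is precisely what makes the subtraction collapse to the desired ratio of extremal lengths on $X$ and $X_0$ only.
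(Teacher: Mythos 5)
Your proof is correct, and it assembles the argument differently from the paper. The paper's own proof is a two-step reduction: it invokes the explicit Liu--Su representation of $\Psi_\xi$ (Theorem \ref{thm:LS}), checks via the normalization of Remark \ref{remark:one} that both base-point suprema vanish, and then quotes the pointwise inequality $i(F,\mu)\leq i(\xi,\mu)$ of Proposition \ref{prop_low-bound} (itself proved in the appendix, Lemma \ref{lem:app1}, from Minsky's inequality together with $\Ext_{X_n}(F_v(q_n))=\Ext_{X_0}(F_v(q_n))/K(X_0,X_n)$). You instead stay at the level of distances: Kerckhoff's formula with test foliation $F_v(q_n)$, the exact decay $\Ext_{X_n}(F_v(q_n))=e^{-2d_n}\Ext_{X_0}(F_v(q_n))$ (the same geometric fact the appendix uses, and correctly the crux of the argument), the cancellation after subtraction, and then the limit using Azemar's convergence $q_n\to q$ from Lemma \ref{lem-exist}. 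The only inputs you use beyond these are the definition of horofunction convergence (so you never need the explicit formula for $\Psi_\xi$, only that $\Psi_\xi(X)=\lim_n\bigl(d_{\mathcal T}(X,X_n)-d_{\mathcal T}(X_0,X_n)\bigr)$) and, at the very last step, the Gardiner--Masur identity $\sup_\mu i(F,\mu)^2/\Ext_X(\mu)=\Ext_X(F)$ to recognize $\tfrac12\log\bigl(\Ext_X(F)/\Ext_{X_0}(F)\bigr)$ as $\Psi_F(X)$; you should cite that identity explicitly (the paper states it in the level-set subsection), since without it the inequality you obtain only bounds the extremal-length ratio, not $\Psi_F$ as defined. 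In exchange, your route bypasses Proposition \ref{prop_low-bound} entirely, so it is more self-contained; the paper's route is shorter given that proposition and isolates the pointwise comparison $i(F,\cdot)\leq i(\xi,\cdot)$, which it reuses elsewhere (e.g.\ in the appendix lemmas leading to Proposition \ref{lem:positive}).
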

	\begin{proof}
		Note that
		$$
		\Psi_\xi(X)=\log \sup _{\mu \in \mathcal{MF}} \frac{i(\xi, \mu)}{\sqrt{\operatorname{Ext}_{X}(\mu)}}
		-
		\log \sup _{\mu \in \mathcal{MF}} \frac{ i(\xi,\mu)}{\sqrt{\operatorname{Ext}_{X_0}(\mu)}}.$$
		It is not hard to check that  (see Remark \ref{remark:one})
		$$
		\log \sup _{\mu \in \mathcal{MF}} \frac{i(\xi, \mu)}{\sqrt{\operatorname{Ext}_{X_0}(\mu)}}=
		\log \sup _{\mu \in \mathcal{MF}} \frac{i(F,\mu)}{\sqrt{\operatorname{Ext}_{X_0}(\mu)}}=0.
		$$
		Moreover, by Proposition \ref{prop_low-bound}, $i({F},\mu)\leq
		i(\xi, \mu)$ for all $\mu\in \mathcal{MF}$, which implies that
		$$\Psi_{{F}}(X)\leq \Psi_{\xi}(X).$$
	\end{proof}
	
	The above two lemmas give sharp upper and lower bound of $\Psi_\xi$, in terms of the data associated to the optimal geodesic. We remark that $\Psi_\xi$, $\Psi_{{F}}$
	and $B_{\xi}$ are comparable to each other, that is,
	$$\frac{B_\xi}{C} \leq \Psi_{F} \leq \Psi_\xi \leq B_\xi,$$
	where $C$ is a constant depends on $X_0$.
	In the following, we shall denote by $\Psi_F \asymp \Psi_\xi \asymp B_\xi$.

	\subsection{Level sets and sub-level sets}
	We need some properties on the level sets of horofunctions.
	
	\begin{definition}[Level set and sub-level set]
		Let $f$ be a function defined on $\T$. For each $r \in \mathbb{R}$,
		the level set and sub-level set of $f$ are defined by
		$$
		\mathrm{L}(f,r) = \{X \in \mathcal{T}(S)\ | \ f(X) = r\}
		$$
		and
		$$
		\mathrm{SL}(f,r) = \{X \in \mathcal{T}(S)\ |\ f(X) < r\}.
		$$
	\end{definition} 
	Assume that $Y \in \T$ and it is contained in a level set
	$\mathrm{L}(f, r)$, that is, $f(Y)=r$. Then we also let
	$$\mathrm{L}(f,Y)=\{X\in \T \ | \  f(X)=f(Y)=r\}$$
	denotes the level set passing through $Y$.  
	
	When $f$ is a horofunction function, level sets of $f$ are called  \textit{horospheres} and sub-level sets of $f$ are called \textit{horoballs}. 
	
	\bigskip
	
	In particular, let $F \in \mathcal{MF}$ be a measured foliation.
	The corresponding horofunction is given by
	$$
	\Psi_{F}(X) = \frac{1}{2}\log \sup _{\mu \in \mathcal{MF}} \frac{i({F}, \mu)^2}{\operatorname{Ext}_{X}(\mu)}
	-
	\frac{1}{2} \log \sup _{\mu\in \mathcal{MF}} \frac{i({F}, \mu)^2}{\operatorname{Ext}_{X_0}(\mu)}.
	$$
	We may assume that $\mathrm{Ext}_{X_0}({F}) = 1$. Then,
	by an inequality of Gardiner-Masur \cite{gardiner1991extremal},
	$$
	\Psi_{{F}}(X)= \frac{1}{2}\log \operatorname{Ext}_{X}({F}).
	$$
	This implies that
	\begin{equation*}\label{eq_03}
		\mathrm{L}(\Psi_{{F}},r) = \{X \in \mathcal{T}(S)\ |\ \mathrm{Ext}_{X}({F}) = e^{2r}\}.
	\end{equation*}

	\medskip

	\begin{definition}
		Let $\xi,\eta \in \partial_{h}\mathcal{T}(S)$ be two horofunctions. We say that the level sets
		$\mathrm{L}(\Psi_\xi,r)$ and $\mathrm{L}(\Psi_\eta,s)$ are tangent to each other at $X\in \T$ if
		
		\begin{itemize}
			\item $X\in \mathrm{L}(\Psi_\xi,r)\cap \mathrm{L}(\Psi_\eta, s)$.
			\item $\mathrm{SL}(\Psi_\xi,r)$ and $\mathrm{SL}(\Psi_\eta,s)$ are disjoint.
		\end{itemize}
	\end{definition}
We remark that the above definition doesn't assume that $\mathrm{L}(\Psi_\xi,r)\cap \mathrm{L}(\Psi_\eta, s)$ consists of a single point.

	\begin{lemma}\label{lem:tangent}
		Let $\xi$ and $\eta$ be a pair of horofunctions that fill up the surface. 
		For any given $s_0 \in \mathbb{R}$, there exists some $t_0\in \mathbb{R}$ (depending on $s_0$) such that $\mathrm{L}(\xi,s_0)$
		and $\mathrm{L}(\eta,t_0)$ are tangent to each other.
	\end{lemma}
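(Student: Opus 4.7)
The plan is to take $t_0$ to be the infimum of $\Psi_\eta$ on the closed sublevel set $\overline{\mathrm{SL}(\Psi_\xi,s_0)} = \{X\in\TS : \Psi_\xi(X)\le s_0\}$, and then to show that this infimum is finite and attained at some minimizer $X_*\in\mathrm{L}(\xi,s_0)$. Granting this, the inequality $\Psi_\eta\ge t_0$ throughout $\overline{\mathrm{SL}(\Psi_\xi,s_0)}$ immediately gives $\mathrm{SL}(\Psi_\xi,s_0)\cap\mathrm{SL}(\Psi_\eta,t_0)=\emptyset$, while the minimizer $X_*\in\mathrm{L}(\xi,s_0)\cap\mathrm{L}(\eta,t_0)$ witnesses the common intersection; together these are the defining properties of tangency.

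The heart of the argument is a compactness claim: the filling hypothesis prevents infimizing sequences from escaping to infinity. I argue by contradiction, supposing that $\{X_n\}\subset\overline{\mathrm{SL}(\Psi_\xi,s_0)}$ with $\Psi_\eta(X_n)\to t_0$ and $t_n:=\dT(X_0,X_n)\to\infty$. Up to subsequence $X_n\to\zeta\in\GMT$; by Lemma \ref{lem-exist} the initial quadratic differentials $q_n\in Q^1(X_0)$ of the Teichm\"uller geodesic from $X_0$ through $X_n$ converge to some $q_\infty\in Q^1(X_0)$, so $F_v(q_n)\to F_\zeta := F_v(q_\infty)$ in $\MF$ with $\mathrm{Ext}_{X_0}(F_\zeta)=\|q_\infty\|=1$ and in particular $F_\zeta\ne 0$. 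Since the vertical foliation contracts exponentially along the Teichm\"uller flow, $\mathrm{Ext}_{X_n}(F_v(q_n))=e^{-2t_n}$, and specializing the formula of Theorem \ref{thm:LS} to this foliation yields
\begin{equation*}
\Psi_\xi(X_n)\ \ge\ \log\frac{i(\xi,F_v(q_n))}{\sqrt{\mathrm{Ext}_{X_n}(F_v(q_n))}}-C_\xi\ =\ t_n+\log i(\xi,F_v(q_n))-C_\xi,
\end{equation*}
with $C_\xi$ a constant independent of $n$; the analogous estimate holds for $\Psi_\eta$. If $i(\xi,F_\zeta)>0$ the right-hand side tends to $+\infty$, contradicting $\Psi_\xi(X_n)\le s_0$, so $i(\xi,F_\zeta)=0$. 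Since $\Psi_\eta(X_n)$ is bounded above (it converges to $t_0\in\mathbb{R}$ or diverges to $-\infty$), the same argument gives $i(\eta,F_\zeta)=0$. With $F_\zeta\in\MF\setminus\{0\}$ this contradicts the filling hypothesis, so $\{X_n\}$ is bounded and admits a limit $X_*\in\overline{\mathrm{SL}(\Psi_\xi,s_0)}$ with $\Psi_\eta(X_*)=t_0>-\infty$.

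It remains to upgrade the minimizer from the closed sublevel set to the level set itself. If $\Psi_\xi(X_*)<s_0$, Lemma \ref{lem-exist} produces an optimal geodesic $\mathbf{G}$ for $\eta$ with $\mathbf{G}(0)=X_*$ along which $\Psi_\eta(\mathbf{G}(t))=t_0-t$; continuity of $\Psi_\xi$ forces $\Psi_\xi(\mathbf{G}(t))<s_0$ for small $t>0$, placing $\mathbf{G}(t)$ inside $\mathrm{SL}(\Psi_\xi,s_0)$ with $\Psi_\eta(\mathbf{G}(t))<t_0$, contradicting the definition of $t_0$. Therefore $X_*\in\mathrm{L}(\xi,s_0)\cap\mathrm{L}(\eta,t_0)$, and the tangency properties follow as explained above. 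The main obstacle is the compactness step: everything rests on converting the abstract filling condition into the quantitative lower bound $\Psi_\xi(X_n)\ge t_n+O(1)$, which is made possible precisely by testing the horofunction formula against the vertical foliation of the geodesic through $X_n$ together with the exponential decay of its extremal length along the Teichm\"uller flow.
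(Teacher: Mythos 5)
Your argument is correct, but it takes a genuinely different route from the paper. The paper's proof is a reduction: it passes to the vertical foliations $\mu,\nu$ of the quadratic differentials defining the optimal geodesics for $\xi$ and $\eta$, invokes the sandwich $\Psi_{F}\leq \Psi_\xi\leq B_\xi$ to get $\Psi_\xi\asymp\Psi_\mu$ and $\Psi_\eta\asymp\Psi_\nu$, and then cites Lemma 3.10 of \cite{Su2018Horospheres} on sub-level sets of extremal length functions to produce the critical value $t_0$ as the maximal $t$ for which $\mathrm{SL}(\Psi_\eta,t)$ misses $\mathrm{SL}(\Psi_\xi,s_0)$. You instead minimize $\Psi_\eta$ directly over the closed sub-level set $\{\Psi_\xi\le s_0\}$ and prove the needed compactness from scratch: testing the Liu--Su formula against the vertical foliation $F_v(q_n)$ of the geodesic from the basepoint through $X_n$, together with $\mathrm{Ext}_{X_n}(F_v(q_n))=e^{-2\dT(X_0,X_n)}$, gives $\Psi_\xi(X_n)\ge \dT(X_0,X_n)+\log i(\xi,F_v(q_n))-C_\xi$ (the same computation as the paper's appendix Lemma \ref{lem:app1}), so escape to infinity forces $i(\xi,F_\zeta)=i(\eta,F_\zeta)=0$ for a nonzero limit foliation $F_\zeta$, contradicting filling; the final step, pushing the minimizer off the interior of the sub-level set via the optimal geodesic for $\eta$ through $X_*$ (Lemma \ref{lem-exist}), correctly places $X_*$ on $\mathrm{L}(\xi,s_0)\cap\mathrm{L}(\eta,t_0)$, and disjointness of the open horoballs is immediate from the definition of the infimum, so both clauses of the paper's tangency definition hold. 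What your route buys is self-containedness: you avoid the external horosphere lemma, the comparability constants, and in particular the paper's assertion that $\mu$ and $\nu$ themselves fill up the surface (which does not follow immediately from $i(\mu,\cdot)\le i(\xi,\cdot)$), since you work throughout with the filling hypothesis on $\xi,\eta$ directly. What the paper's route buys is brevity and extra structure -- compactness of $\overline{\mathrm{SL}(\Psi_\xi,s_0)}\cap\overline{\mathrm{SL}(\Psi_\eta,t)}$ for all $t\ge t_0$ -- which fits the horosphere machinery used elsewhere. Two small remarks: your appeal to Lemma \ref{lem-exist} for convergence of $q_n$ is more than you need (compactness of $Q^1(X_0)$ already yields a subsequential limit, which suffices), and you implicitly use that a sequence with $\dT(X_0,X_n)\to\infty$ accumulates only on the boundary of the compactification; both points are harmless and standard.
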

	\begin{proof}
		Let $\mu$ and $\nu$ be the vertical measured foliations
		of the quadratic differentials that define the optimal geodesics
		for $\xi$ and $\eta$, respectively. By our assumption,
		$\mu$ and $\nu$ also fill up the surface.
		
		By \cite[Lemma 3.10]{Su2018Horospheres}, for given $s_0$,
		there is a constant $t_0$ such that
		\begin{enumerate}
			\item for all $t<t_0$, $\mathrm{SL}(\Psi_{\nu},t)$ is disjoint from $\mathrm{SL}(\Psi_\mu,s_0)$.
			\item for all $t\geq t_0$, the intersection of $\overline{\mathrm{SL}(\Psi_{\nu},t)}$ and $\overline{\mathrm{SL}(\Psi_\mu,s_0)}$ is non-empty and compact.
		\end{enumerate} 
		Since $\Psi_\xi\asymp \Psi_{\mu}$ and $\Psi_\eta \asymp \Psi_{\nu}$,
		the same statement holds if we replace $\Psi_{\mu}$ and $\Psi_{\nu}$
		by  $\Psi_\xi$ and $\Psi_\eta$.

		As a result, there is some maximal number $t_0$ such that for all $t<t_0$, $\mathrm{SL}(\Psi_\eta,t)$ is disjoint from $\mathrm{SL}(\Psi_\xi,s_0)$. Then $\mathrm{L}(\Psi_\xi,s_0)$
		and $\mathrm{L}(\Psi_\eta,t_0)$ are tangent to each other. 
	\end{proof}

	\subsection{ Busemann functions are $C^1$.}

	Note that for any measured foliation $\mathcal{F}$, the horofunction $\Psi_{\mathcal{F}}$ is $C^1$.
	This follows from the fact that extremal length functions are $C^1$.
	
	\begin{lemma}
		Let $B$ be a Busemann function. Then $B$ is $C^1$ in the Teichm\"uller space.
	\end{lemma}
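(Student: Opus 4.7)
Let $B = B_\xi$ be the Busemann function corresponding to the ray $\mathbf{R}_q$ with $q\in Q^1(X_0)$; write $F_v = \sum_{i=1}^{k} F_i$ for the ergodic decomposition of the vertical foliation and $F_h$ for the horizontal foliation of $q$. My plan is to obtain an explicit formula for $B(X)$ as the logarithm of a positive linear combination of the extremal lengths $\mathrm{Ext}_X(F_i)$, and then to deduce $C^1$-regularity from the classical fact that each $X\mapsto \mathrm{Ext}_X(\mu)$ is real-analytic on $\TS$.

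Combining Walsh's representation in Theorem~\ref{Walsh2012Corollary1.2} with the Liu--Su identification in Theorem~\ref{thm:LS} gives
$$B(X) \;=\; \tfrac{1}{2}\log \sup_{\mu\in\MF}\frac{\sum_{i=1}^{k} i(F_i,\mu)^2/i(F_i,F_h)}{\mathrm{Ext}_X(\mu)} \;+\; C_{X_0},$$
where $C_{X_0}$ is fixed by $B(X_0)=0$. The decisive step is to show that this supremum distributes over the ergodic decomposition:
$$\sup_{\mu\in\MF}\frac{\sum_{i=1}^{k} i(F_i,\mu)^2/i(F_i,F_h)}{\mathrm{Ext}_X(\mu)} \;=\; \sum_{i=1}^{k} \frac{\mathrm{Ext}_X(F_i)}{i(F_i,F_h)}.$$
The upper bound is immediate from a term-by-term application of the Gardiner--Masur inequality $i(F,\mu)^2 \le \mathrm{Ext}_X(F)\,\mathrm{Ext}_X(\mu)$. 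For the matching lower bound I would test the supremum against the sequence of vertical foliations produced by the Teichm\"uller maps from $X$ to $\mathbf{R}_q(t_n)$ with $t_n\to\infty$: as $\mathbf{R}_q(t_n)\to\xi$ in the Gardiner--Masur compactification, these test foliations should distribute mass among the $F_i$ in the very ratios that coordinate the Gardiner--Masur extremizations for all $k$ components simultaneously.

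Granting the identity, we obtain (up to an additive constant) the closed form $B(X) = \tfrac{1}{2}\log\sum_{i=1}^{k} \mathrm{Ext}_X(F_i)/i(F_i,F_h)$. Each intersection number $i(F_i,F_h)$ is strictly positive because $F_v$ and $F_h$ fill up the surface and $F_i$ is an ergodic summand of $F_v$, and the sum inside the logarithm is strictly positive on all of $\TS$; since extremal-length functions are real-analytic on $\TS$, it follows that $B\in C^1(\TS)$. The hard part is the lower-bound half of the supremum identity: the Gardiner--Masur extremizer for a single component $F_i$ generally depends on $i$, so some coordinated construction --- genuinely exploiting that $\xi$ is a Busemann point rather than a generic horofunction --- is required to realize all $k$ Gardiner--Masur extrema along one sequence of test foliations.
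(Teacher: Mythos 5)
Your reduction stands or falls with the identity
\begin{equation*}
\sup_{\mu\in\MF}\frac{\sum_{i=1}^{k} i(F_i,\mu)^2/i(F_i,F_h)}{\Ext_X(\mu)}
\;=\;\sum_{i=1}^{k}\frac{\Ext_X(F_i)}{i(F_i,F_h)},
\end{equation*}
and this identity is false whenever the ergodic decomposition has $k\ge 2$ components; only the inequality ``$\le$'', which you correctly get termwise from Minsky's inequality, survives. Indeed, the left-hand ratio is homogeneous of degree $0$ in $\mu$, hence descends to a continuous function on the compact space $\PMF$ and attains its supremum at some $\mu^*$. If the supremum equalled the right-hand side then, since each summand satisfies $i(F_i,\mu^*)^2/\Ext_X(\mu^*)\le\Ext_X(F_i)$, equality would have to hold term by term; but equality in Minsky's inequality for the pair $(F_i,\mu^*)$ forces $F_i$ and $\mu^*$ to be, up to scale, the vertical and horizontal foliations of a quadratic differential on $X$, and by the Hubbard--Masur theorem the quadratic differential on $X$ with horizontal foliation in the projective class of $\mu^*$ is unique up to scale. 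Hence all the $F_i$ would be projectively equal, which is impossible for distinct ergodic components. So for $k\ge 2$ the supremum is \emph{strictly} smaller than $\sum_i\Ext_X(F_i)/i(F_i,F_h)$, and your closed form for $B$ is wrong: the ``hard part'' you flag is not merely hard, it cannot be done. Your proposed test sequence also points the wrong way: the vertical foliations of the Teichm\"uller maps from $X$ to $\mathbf{R}_q(t_n)$ converge (by Walsh's modular equivalence) to a nonnegative combination of the $F_i$ themselves, which has zero intersection number with every $F_i$, so it annihilates the numerator instead of maximizing it. What your argument does prove is the ergodic case $k=1$, where $B$ equals $\tfrac12\log\Ext_X(F_v)$ up to an additive constant; but that case is already covered by the $C^1$-regularity of extremal length, which the paper notes separately.

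The paper's proof deliberately avoids any closed formula. It fixes $X$, squeezes $B(X_{s\mu})$ between the two distance-type functions $d_{\mathcal{T}}(X_{s\mu},X_1)-r$ and $-d_{\mathcal{T}}(X_{s\mu},X_2)+r$, where $X_1,X_2$ are points on the optimal geodesic through $X$ (this uses that $B$ is $1$-Lipschitz and decreases linearly along that geodesic), applies Earle's differentiability formula for the Teichm\"uller distance to conclude that $B$ is differentiable at $X$ with differential $-\operatorname{Re}\int_X\mu\,q(X)$, and finally shows that $X\mapsto q(X)$ is continuous using Walsh's description of the directing differentials via modular equivalence. If you want a formula-based route, you would first need an exact evaluation of the supremum defining the Busemann point, and precisely that is unavailable when $F_v$ is not ergodic.
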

	\begin{proof}
		Consider any $X\in \T$.  Let $\mu$ be a Beltrami differential on
		$X$, and let $X_{s \mu}$ be the quasiconformal deformation of $X$ with Beltrami differential
		$s \mu$,  where $-\epsilon<s<\epsilon$.
		
		We may assume that $B(X)=0$. There is a unique quadratic differential $q=q(X)\in Q^1(X)$ such that
		the geodesic ray induced by $q$ converges to the Busemann point (function) $B$.
		Take some $r>0$. Then the above geodesic intersects the geodesic sphere of radius $r$ (centered at $X$)  at two points, denoted by $X_1$ and $X_2$. We assume that $B(X_1)=-r, B(X_2)=r$.

		Define two auxiliary functions
		$$A_1(s)=d_{\mathcal{T}}(X_{s\mu}, X_1)-r,$$
		$$A_2(s)=-d_{\mathcal{T}}(X_{s\mu}, X_2)+r.$$
		Note that $A_1(0)=A_2(0)=0$.
		
		Since the Busemann function is $1$-Lipschitz
		and $B(X_1)=-r, B(X_2)=r$, we have
		\begin{eqnarray*}
			B(X_{s\mu})+ r &\leq& d_{\mathcal{T}}(X_{s\mu},X_1) \\
			-d_{\mathcal{T}}(X_{s\mu},X_2)&\leq& B(X_{s\mu})- r.
		\end{eqnarray*}
		Thus
		$$A_2(s)\leq B(X_{s\mu})\leq A_1(s).$$
		
		Note that $A_1,A_2$ are differentiable at $s=0$. In fact,  there is a formula due to Earle \cite{Earle1977the}:
		\begin{eqnarray*}
			\lim_{s\to 0} \frac{A_1(s)}{s}&=&\lim_{s\to 0}\frac{d(X_{s\mu},X_1)}{s} \\
			&=& \operatorname{Re} \int_X \mu (-q)=-\operatorname{Re} \int_X \mu q.
		\end{eqnarray*}
		Similarly,

		\begin{eqnarray*}
			\lim_{s\to 0} \frac{A_2(s)}{s}&=& - \lim_{s\to 0}\frac{d(X_{s\mu},X_2)}{s} \\
			&=&-\operatorname{Re} \int_X \mu q.
		\end{eqnarray*}
		As a result, we obtain
		
		$$-\operatorname{Re} \int_X \mu q \leq \liminf_{s\to 0} \frac{B(X_{s\mu})}{s}\leq \limsup_{s\to 0} \frac{B(X_{s\mu})}{s}\leq -\operatorname{Re} \int_X \mu q.$$
		This implies that
		$$\lim_{s\to 0} \frac{B(X_{s\mu})}{s}= - \operatorname{Re} \int_X \mu q.$$

		\bigskip
		
		We have shown that the Busemann function is differentiable at every $X\in\T$,
		and the differential of $B$ is given by the contangent vector $q=q(X)$.
		
		We conclude the proof by showing that $q(X)$ depends continuously on $X$.
		In fact, for any sequence $X_n \in \T$ that converges to $X$,
		$q_n=q(X_n)\in Q^1(X_n)$ is the quadratic differential which induces the geodesic ray starting from
		$X_n$ and converging to the Busemann point $B$.
		It was proved by Walsh \cite[Theorem 4]{Walsh2019The} that each $q_n$ is modular equivalent to $q=q(X)$.
		
		Assume that $\sum_j a^n_j {F}_j$ is the ergodic decomposition of the vertical foliations
		of $q_n$. Denote by $H(q_n)$ the horizontal foliations of $q_n$. Then, by definition,
		$$\frac{a_j^n}{i({F}_j, H(q_n))}=c_j,$$
		where $c_j>0$ is a constant independent of $n$.
		Up to a subsequence, we may assume that $q_n$ converges to some $\phi\in Q^1(X)$.
		Then $$i({F}_j, H(q_n))\to i({F}_j, H(\phi)).$$
		On the other hand, the vertical foliation of $\phi$ is given by $\sum_j a_j {F}_j$,
		where $a_j=\lim_{n\to\infty} a^n_j \geq 0$.
		Note that $i({F}_j, H(\phi))$ is the length of ${F}_j$ in the flat metric $|\phi|$, which must be positive. This implies that $a_j>0$.
		
		As a result, we have shown that $\phi$ is modular equivalent to each $q_n$, and then it is modular equivalent to
		$q$. Since on $X$ the modular equivalence class in $Q^1(X)$ is unique \cite[Theorem 5]{Walsh2019The}, we have $q=\phi$.
		
	\end{proof}

	\begin{remark}
		We don't know whether any horofunction is $C^1$ in the Teichm\"uller space or not. 
	\end{remark}
	
	\subsection{Existence of optimal geodesic.}

	\begin{theorem}\label{thm_01}
		Let $\xi, \eta \in \GMT$ that fill up the surface. Then there exists a Teichm\"uller geodesic
		$\mathbf{G}(t)$ that is optimal for $\xi$ and $\eta$. That is, along the geodesic 	$\mathbf{G}(t)$, 
		
		\begin{itemize}
			\item $\Psi_\xi(\mathbf{G}(t)) - \Psi_\xi(\mathbf{G}(0)) = -t$, and 
			\item $\Psi_\eta(\mathbf{G}(t)) - \Psi_\eta(\mathbf{G}(0)) = t$.
		\end{itemize}
	\end{theorem}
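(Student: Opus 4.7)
The plan is to locate a basepoint $X_0 \in \T$ at which the optimal geodesic $\mathbf{G}_1$ for $\xi$, traversed backwards, coincides with the optimal geodesic $\mathbf{G}_2$ for $\eta$; the bi-infinite Teichm\"uller geodesic $\mathbf{G}_1$ is then the geodesic demanded by the theorem. Fix any $s_0 \in \mathbb{R}$ and apply Lemma \ref{lem:tangent} to obtain $t_0$ and a point $X_0 \in \mathrm{L}(\Psi_\xi, s_0) \cap \mathrm{L}(\Psi_\eta, t_0)$ where the two horospheres are tangent. Since horofunctions are only defined up to additive constants, we may assume $\Psi_\xi(X_0) = \Psi_\eta(X_0) = 0$, so that tangency reads $\mathrm{SL}(\Psi_\xi, 0) \cap \mathrm{SL}(\Psi_\eta, 0) = \emptyset$. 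Let $\mathbf{G}_1 = \mathbf{G}_{q_1}$ and $\mathbf{G}_2 = \mathbf{G}_{q_2}$ be the unique optimal geodesics through $X_0$ furnished by Lemma \ref{lem-exist}, with $q_1, q_2 \in Q^1(X_0)$. It then suffices to prove $q_2 = -q_1$, for then $\mathbf{G}_2(t) = \mathbf{G}_1(-t)$ and the reverse of $\mathbf{G}_1$ is automatically optimal for $\eta$.

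To establish $q_2 = -q_1$, I first reduce the problem to a tangency of Busemann horoballs. Let $B_\xi, B_\eta$ be the Busemann functions of $\mathbf{G}_1, \mathbf{G}_2$, again normalized to vanish at $X_0$. The upper-bound Lemma \ref{Lem:upper_bounded} gives $\Psi_\xi \leq B_\xi$ and $\Psi_\eta \leq B_\eta$, hence $\mathrm{SL}(B_\xi, 0) \subset \mathrm{SL}(\Psi_\xi, 0)$ and $\mathrm{SL}(B_\eta, 0) \subset \mathrm{SL}(\Psi_\eta, 0)$, so $\mathrm{SL}(B_\xi, 0) \cap \mathrm{SL}(B_\eta, 0) = \emptyset$. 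Both Busemann horoballs contain $X_0$ on their boundary. By the $C^1$ property of Busemann functions proved in Section 5.3, a first-order Taylor expansion along any quasiconformal deformation $X_{t\mu}$ through $X_0$ yields $B_\xi(X_{t\mu}) = t\,dB_\xi(\mu) + o(t)$, and similarly for $B_\eta$. Global disjointness then rules out any Beltrami class $\mu$ with $dB_\xi(\mu) < 0$ and $dB_\eta(\mu) < 0$ simultaneously, because such $\mu$ would place $X_{t\mu}$ in both Busemann horoballs for small $t > 0$. Since neither differential vanishes (each Busemann function drops at unit rate along its own defining geodesic), this linear incompatibility on a pair of nonzero cotangent vectors forces $dB_\xi(X_0)$ and $dB_\eta(X_0)$ to be anti-parallel.

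To close the argument, I identify these differentials with quadratic differentials. The Earle-formula computation carried out inside the proof of the $C^1$ lemma represents $dB_\xi(X_0)$ by the pairing $\mu \mapsto -\operatorname{Re}\int_{X_0} \mu\, q_1$ and $dB_\eta(X_0)$ by $\mu \mapsto -\operatorname{Re}\int_{X_0} \mu\, q_2$. Anti-parallelism then gives $q_1 = \lambda q_2$ for some $\lambda < 0$, and the unit-area normalization $\|q_1\| = \|q_2\| = 1$ forces $\lambda = -1$, hence $q_2 = -q_1$ as required. The hardest step I expect is the passage from global disjointness of the two Busemann horoballs to the infinitesimal anti-parallelism of their gradients at $X_0$: although morally a standard $C^1$ linearization, executing it rigorously in the Finsler setting of the Teichm\"uller metric requires care with the identification of tangent vectors, cotangent vectors, Beltrami classes, and quadratic differentials. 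Everything else in the plan is then routine.
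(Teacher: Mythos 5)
Your proposal is correct, and it follows the paper's overall skeleton (use Lemma~\ref{lem:tangent} to find a tangency point of the two horospheres, move the basepoint there, and show that the two optimal geodesics through that point furnished by Lemma~\ref{lem-exist} are reverses of each other by proving $q_2=-q_1$), but the decisive step is carried out by a genuinely different argument. The paper sandwiches $\Psi_{F_i}\leq \Psi \leq B_i$ using both Lemma~\ref{Lem:upper_bounded} and the lower bound of Lemma~\ref{lem_up_bound}, passes to the kernels $V(\cdot)$ of the differentials (Lemma~\ref{lem:ab}), and then invokes the characterization Lemma~\ref{lem:coincide} from \cite{Su2018Horospheres} to conclude that the vertical foliations must be the vertical/horizontal pair of a single quadratic differential; note that the kernel argument alone only gives proportionality of the differentials, so the orientation is fixed only implicitly (via disjointness ruling out $q_1=q_2$). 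You instead work only with the two Busemann functions: disjointness of their horoballs plus the Earle-formula differentiability proved in Section 5.3 shows the two differentials $\mu\mapsto -\operatorname{Re}\int_{X_0}\mu q_i$ admit no common strictly negative direction, and the elementary fact that two nonzero linear functionals with this property are negative multiples of each other, together with the unit-area normalization, gives $q_2=-q_1$ directly, sign included. This buys you independence from Lemma~\ref{lem:coincide} and from the lower bound $\Psi_F\leq\Psi_\xi$ in this step (they are still used inside Lemma~\ref{lem:tangent}), and it settles the orientation cleanly; the only points to write out carefully are the ones you flag, namely that the cotangent identification $q\mapsto(\mu\mapsto\operatorname{Re}\int\mu q)$ is a real-linear isomorphism onto $Q(X_0)$ so that proportional functionals give proportional quadratic differentials, and the (easy) surjectivity argument showing that two $\mathbb{R}$-independent functionals always have a common negative direction.
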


	\begin{proof}
		By  Lemma \ref{lem:tangent}, there are
		some $s_0, t_0 \in \mathbb{R}$ such that the level sets $\mathrm{L}(\xi, s_0)$ and $\mathrm{L}(\eta, t_0)$ are tangential to each other at some $X_0\in \TS$.
		
		In the absence of any confusion, we take $X_0$ to be the basis-point of the horofunction compactification.
		In this case,  $s_0=t_0=0$.

		Let $\mathbf{G}_{1}(t)$ be the optimal geodesic ray for $\xi$ and let $\mathbf{G}_{2}(t)$ be the optimal geodesic ray for $\eta$, where $\mathbf{G}_1(0)=\mathbf{G}_2(0)=X_0$.
		We claim that
			$\mathbf{G}_{1}(t) = \mathbf{G}_{2}(-t).$

		To see this, assume that the geodesic $\mathbf{G}_{i}$ is induced by $q_i\in Q^1(X_0)$.
		Denote the vertical foliation
		of $q_i$ by ${F}_i$. And we denote the Busemann function associated to $q_i$ by $B_i$.
		Then we have
		$$\Psi_{{F}_i}(X_0)=\Psi_{B_i}(X_0)=0.$$
		According to Lemma \ref{Lem:upper_bounded} and Lemma \ref{lem_up_bound}, we have
		$$
		\mathrm{SL}(B_1, 0)\subset  \mathrm{SL}(\Psi_\xi,0) \subset \mathrm{SL}(\Psi_{{F}_1}, 0),$$
		and
		$$
		\mathrm{SL}(B_2, 0)  \subset \mathrm{SL}(\Psi_\eta,0) \subset \mathrm{SL}(\Psi_{{F}_2}, 0).
		$$
		
		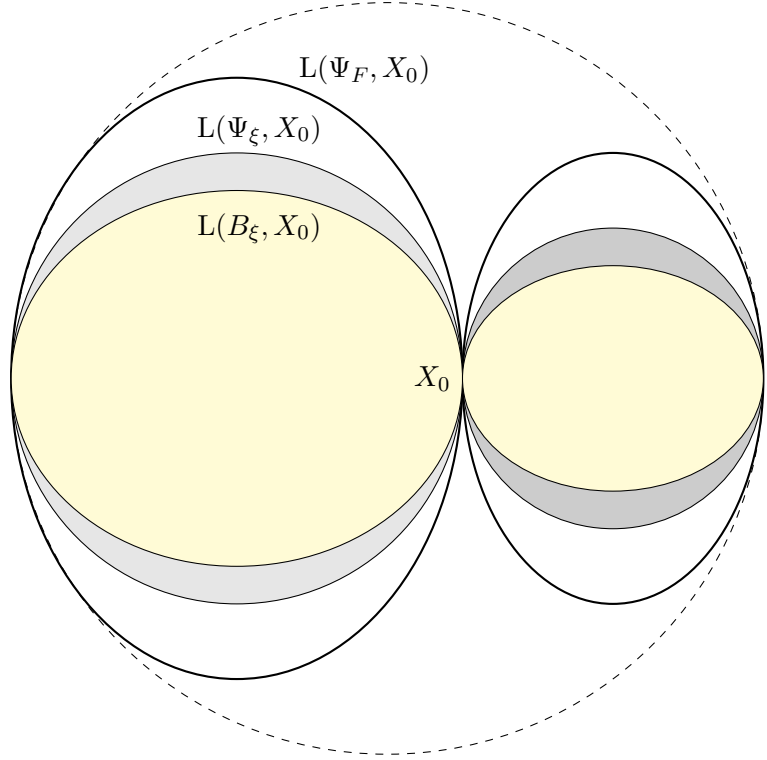
\begin{figure}[htb]
		\begin{tikzpicture}
			\draw[dashed] (0,0) circle (5cm);
			\filldraw[fill=gray!40] (3,0) circle (2cm);
			\draw[thick] (3,0) ellipse (2cm and 3cm);
			\filldraw[fill=gray!20] (-2,0) circle (3cm);
			\draw[thick] (-2,0) ellipse (3cm and 4cm);
				\filldraw[fill=yellow!20] (-2,0) ellipse (3cm and 2.5cm);
			\node at (-0.3,4.1) {$\mathrm{L}(\Psi_{{F}},X_0)$};
			\node at (-1.7,3.3) {$\mathrm{L}(\Psi_\xi,X_0)$};
				\node at (-1.7,2) {$\mathrm{L}(B_\xi,X_0)$};
					\filldraw[fill=yellow!20] (3,0) ellipse (2cm and 1.5cm);
			\node at (0.6,0) {$X_0$};
		\end{tikzpicture}\caption{Proof of Theorem \ref{thm_01}. 
		It follows from the inequality 
		$\Psi_{F}\leq \Psi_\xi\leq B_\xi$ that 
		$
		\mathrm{SL}(B_\xi, 0)\subset  \mathrm{SL}(\Psi_\xi,0) \subset \mathrm{SL}(\Psi_{{F}}, 0),$} 
		\label{fig_02}
	\end{figure}
		
		We need

		\begin{lemma}\label{lem:ab}
			For any $C^1$-horofunction $f$, denote by $V(f)$ the space of tangent vectors $\mu$ at $X_0$ such that
			$df(\mu)=0.$
			Then $V(B_1)=V(B_2)$.
		\end{lemma}
		\begin{proof}
			For each $\mu\in V(B_1)$, there is a path $\gamma(t)\subset \T, -\epsilon<t<\epsilon$ that is tangent to $\mu$
			at $\gamma(0)=X_0$ and $B_1(\gamma(t))\equiv 0$. Since $\mathrm{L}(B_1,0)$ and $\mathrm{L}(B_2,0)$ are tangent to each other
			at $X_0$, we have
			$$B_2(\gamma(t))\geq 0, B_2(X_0)= 0.$$
			It follows that $X_0$ is a point of minimum  of $B_2(\gamma(t))$. Thus
			$\mu\in V(B_2)$.
			
			We have shown that $V(B_1)\subset V(B_2)$.
			Similarly, we have $V(B_2)\subset V(B_1)$.
			
		\end{proof}
		
		By the same argument, we can show that
		$V(B_i)=V(\Psi_{{F}_i}).$
		This implies that
		$$
		V(\Psi_{{F}_1})=V(\Psi_{{F}_2}).
		$$
		
		The next lemma is proved by \cite[Lemma 3.5]{Su2018Horospheres}:
		
		\begin{lemma}\label{lem:coincide}
			Let $q\in Q^1(X_0)$ be the quadratic differential
			with vertical foliation ${F}_v$ and horizontal foliation
			${F}_h$.  Then $F$ is a measured foliation satisfying $V(\Psi_{{F}_v})=V(\Psi_{{F}})$ if and only if ${F}$ is projectively equivalent to ${F}_v$ or 
			${F}_h$.
		\end{lemma}
		
	By Lemma \ref{lem:coincide}, we have  $q_1=-q_2$ and then $\mathbf{G}_{1}(t) = \mathbf{G}_{2}(-t).$
	\end{proof}

	\subsection{Uniqueness of optimal geodesic.}
	
	Let $\xi,\eta$ be a pair of horofunctions that fill up the surface. We have shown that there exists at least one geodesic which is optimal for $\xi$
	and its inverse is optimal for $\eta$.  For simplicity, we call such a geodesic \emph{optimal} for 
	$\xi$ and $\eta$. 
	
	\begin{figure}[htb]
		\begin{tikzpicture}
			\draw[dashed] (0,0) circle (5cm);
			\filldraw[fill=gray!40] (3,0) circle (2cm);
			\draw[thick] (3,0) ellipse (2cm and 2.9cm);
			\filldraw[fill=gray!20] (-2,0) circle (3cm);
			\draw[thick] (-2,0) ellipse (3cm and 3.9cm);
			\node at (-2,0) {$\mathrm{SL}(\Psi_\xi,X)$};
			\node at (3,0) {$\mathrm{SL}(\Psi_\eta,X)$};
			\node at (-0.3,4.1) {$\mathrm{L}(\Psi_{\mathcal{F}_v},X)$};
			\node at (-1.5,2.1) {$\mathrm{L}(\Psi_\xi,X)$};
			\node at (2,-3.4) {$\mathrm{L}(\Psi_{\mathcal{F}_h},X)$};
			\node at (3,-1.2) {$\mathrm{L}(\Psi_\eta,X)$};
			\node at (0.8,0) {$X$};
		\end{tikzpicture}\caption{Proof of Lemma \ref{lem:seperate}. 
			The  horoballs $\mathrm{SL}(\Psi_\xi,X)$ and $\mathrm{SL}(\Psi_\eta,\eta)$ are seperated by the horospheres $\mathrm{L}(\Psi_{\mathcal{F}_v},X)$ and $\mathrm{L}(\Psi_{\mathcal{F}_h},X)$.  } 
		\label{fig_01}
	\end{figure}
	
	\begin{lemma}\label{lem:seperate}
		Let $\mathbf{G}$ be an optimal geodesic for $\xi$ and $\eta$. Then for any 
		$X\in \mathbf{G}$, the level set $\mathrm{L}(\Psi_\xi,X)$ and $\mathrm{L}(\Psi_\eta,X)$ are tangent to each other at $X$. Moreover,
		$$\mathrm{L}(\Psi_\xi,X)\cap\mathrm{L}(\Psi_\eta,X)=\{X\}.$$
	\end{lemma}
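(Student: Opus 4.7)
The plan is to sandwich the horofunctions $\Psi_\xi$ and $\Psi_\eta$ between the extremal-length horofunctions coming from the quadratic differential that defines $\mathbf{G}$, and then close the argument with Minsky's product inequality for extremal lengths. Let $q \in Q^1(X)$ be the initial quadratic differential at $X$ whose Teichm\"uller ray $\mathbf{R}_q$ traces $\mathbf{G}$ forward; since $\mathbf{G}$ is also optimal for $\eta$ in the reverse direction, $\mathbf{R}_{-q}$ traces $\mathbf{G}$ from $X$ toward $\eta$. Write $F_v = F_v(q)$ and $F_h = F_h(q)$. Without loss of generality, take $X$ itself as the basepoint of the horofunction compactification; then all four horofunctions $\Psi_\xi, \Psi_\eta, \Psi_{F_v}, \Psi_{F_h}$ are normalized to vanish at $X$.

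With this normalization, Lemma~\ref{lem_up_bound} (applied both to $\xi$ with the associated data $q, F_v$ and to $\eta$ with $-q, F_h$) gives the pointwise inequalities
\begin{equation*}
\Psi_{F_v}(Y)\le \Psi_\xi(Y),\qquad \Psi_{F_h}(Y)\le \Psi_\eta(Y), \qquad Y\in \T.
\end{equation*}
Since $\Psi_\xi(X)=\Psi_{F_v}(X)=0$ and $\Psi_\eta(X)=\Psi_{F_h}(X)=0$, we obtain the sub-level-set inclusions $\mathrm{SL}(\Psi_\xi,X)\subset \mathrm{SL}(\Psi_{F_v},X)$ and $\mathrm{SL}(\Psi_\eta,X)\subset \mathrm{SL}(\Psi_{F_h},X)$, and similarly $\mathrm{L}(\Psi_\xi,X)\cap \mathrm{L}(\Psi_\eta,X)\subset \overline{\mathrm{SL}(\Psi_{F_v},X)}\cap \overline{\mathrm{SL}(\Psi_{F_h},X)}$. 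Recalling the identity $\Psi_F(Y)=\tfrac12\log(\Ext_Y(F)/\Ext_X(F))$, both conclusions of the lemma reduce to statements about the single real quantity $\Ext_Y(F_v)\cdot \Ext_Y(F_h)$.

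The remaining input is Minsky's product inequality: for every $Y\in\T$,
\begin{equation*}
\Ext_Y(F_v)\cdot \Ext_Y(F_h)\ge i(F_v,F_h)^2,
\end{equation*}
with equality if and only if $F_v$ and $F_h$ are the horizontal and vertical foliations of some holomorphic quadratic differential on $Y$. Since $F_v,F_h$ are by construction the vertical and horizontal foliations of $q$ on $X$, equality holds at $X$: $\Ext_X(F_v)\Ext_X(F_h)=i(F_v,F_h)^2$. For tangency, suppose $Y\in \mathrm{SL}(\Psi_{F_v},X)\cap \mathrm{SL}(\Psi_{F_h},X)$; then $\Ext_Y(F_v)<\Ext_X(F_v)$ and $\Ext_Y(F_h)<\Ext_X(F_h)$, whose product is strictly less than $i(F_v,F_h)^2$, contradicting Minsky. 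For the singleton intersection, if $Y\in \mathrm{L}(\Psi_\xi,X)\cap \mathrm{L}(\Psi_\eta,X)$, the inclusions above force $\Ext_Y(F_v)\le \Ext_X(F_v)$ and $\Ext_Y(F_h)\le \Ext_X(F_h)$, while Minsky yields $\Ext_Y(F_v)\Ext_Y(F_h)\ge i(F_v,F_h)^2=\Ext_X(F_v)\Ext_X(F_h)$. Both individual inequalities must then be equalities and Minsky is saturated at $Y$, so $F_v$ and $F_h$ are the horizontal and vertical foliations of some quadratic differential on $Y$. Since the pair $(F_v,F_h)$ fills up the surface, Theorem~\ref{thm:geodesic} determines the underlying Riemann surface uniquely, giving $Y=X$.

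The delicate step is the normalization yielding $\Psi_\xi(X)=\Psi_{F_v}(X)$ and $\Psi_\eta(X)=\Psi_{F_h}(X)$; this is what allows the pointwise inequalities from Lemma~\ref{lem_up_bound} to translate directly into the sub-level-set inclusions. The equality is forced by both functions decreasing at unit rate along $\mathbf{G}$ (the horofunction $\Psi_\xi$ by optimality, and $\Psi_{F_v}$ by the computation $\Ext_{\mathbf{G}(t)}(F_v)=e^{-2t}\Ext_X(F_v)$), so after taking $X$ as basepoint the two functions agree on $\mathbf{G}$ and in particular at $X$. Once this step is secured, the remainder is a clean combination of Minsky's inequality with the Gardiner-Masur uniqueness theorem.
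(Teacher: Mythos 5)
Your proof is correct, and its overall architecture is the same as the paper's: rebase the horofunctions at $X$, use the sandwich $\Psi_{F_v}\le\Psi_\xi$ and $\Psi_{F_h}\le\Psi_\eta$ from Lemma \ref{lem_up_bound} (applied to the optimal geodesic for $\xi$ through $X$ and to its reverse for $\eta$), and reduce both claims to the geometry of the extremal-length horospheres $\mathrm{L}(\Psi_{F_v},X)$ and $\mathrm{L}(\Psi_{F_h},X)$. The difference is in how that last geometric fact is handled: the paper simply cites \cite{Su2018Horospheres} for the tangency of these two horospheres and the uniqueness of their intersection point, whereas you reprove it on the spot from Minsky's product inequality $i(F_v,F_h)^2\le \Ext_Y(F_v)\Ext_Y(F_h)$, its equality case, and the Gardiner--Masur uniqueness theorem (Theorem \ref{thm:geodesic}). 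This makes the lemma self-contained, at the cost of invoking the equality characterization of Minsky's inequality, which the paper never states (its appendix records only the inequality); you should either cite it or note that it is exactly the content of the quoted result in \cite{Su2018Horospheres}. One small imprecision: equality in Minsky at $Y$ a priori only produces a quadratic differential on $Y$ whose vertical and horizontal foliations are \emph{positive multiples} of $F_v$ and $F_h$, so Theorem \ref{thm:geodesic} does not apply verbatim; however, the equalities $\Ext_Y(F_v)=\Ext_X(F_v)$ and $\Ext_Y(F_h)=\Ext_X(F_h)$ that you have already extracted force those multiples to be $1$ after normalizing to unit area, so $Y=X$ does follow --- this one-line scaling remark should be added.
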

	\begin{proof}
		Assume that the geodesic $\mathbf{G}$ is directed by  $q\in Q^1(X)$.
		Denote the horizontal and vertical foliations of $q$ by $\mathcal{F}_h$
		and $\mathcal{F}_v$. 
		
		By the results in \cite{Su2018Horospheres}, the level sets $\mathrm{L}(\Psi_{\mathcal{F}_v},X)$ and $\mathrm{L}(\Psi_{\mathcal{F}_h},X)$ are tangent to each other at $X$.
		Moreover, $X$ is the unique intersection point of $\mathrm{L}(\Psi_{\mathcal{F}_v},X)$ and $\mathrm{L}(\Psi_{\mathcal{F}_h},X)$. 
		
		Since $\Psi_{\mathcal{F}_v}\leq \Psi_\xi$ and $\Psi_{\mathcal{F}_h}\leq \Psi_\eta$, 
		the  sub-level sets $\mathrm{SL}(\Psi_\xi,X)$ and $\mathrm{SL}(\Psi_\eta, X)$ are seperated by $\mathrm{L}(\Psi_{\mathcal{F}_v},X)$ and $\mathrm{L}(\Psi_{\mathcal{F}_h},X)$, as shown in the Figure \ref{fig_01}.  As a result, $\mathrm{L}(\Psi_\xi,X)$ and $\mathrm{L}(\Psi_\eta,X)$ are tangent to each other at
		$X$, which is unique. 
		
	\end{proof}

	Now we can verify 
	\begin{theorem}\label{thm:unique}
		For any pair  of horofunctions $\xi,\eta$   that fill up the surface, the optimal geodesic for 
		$\xi$ and $\eta$ is unique. 
	\end{theorem}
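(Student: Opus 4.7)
The plan is to show that any optimal geodesic for $\xi$ and $\eta$ is completely pinned down, level by level, by the pair of tangent horospheres it must pass through, and that this data is intrinsic to $\xi$ and $\eta$ (independent of the geodesic). Let $\mathbf{G}_1, \mathbf{G}_2$ be two optimal geodesics for $\xi$ and $\eta$. By the optimality relation $\Psi_\xi(\mathbf{G}_i(t))-\Psi_\xi(\mathbf{G}_i(0))=-t$, the function $\Psi_\xi$ restricted to $\mathbf{G}_i$ is an affine bijection onto $\mathbb{R}$. Thus for every $s\in\mathbb{R}$ there is a unique point $X_i(s)\in\mathbf{G}_i$ with $\Psi_\xi(X_i(s))=s$. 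The goal reduces to proving $X_1(s)=X_2(s)$ for each $s$.

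First, I would extract the key \emph{uniqueness of tangency level} from Lemma \ref{lem:tangent}: inspecting its proof, $t_0$ is defined as the supremum of those $t$ for which $\mathrm{SL}(\Psi_\eta,t)$ is disjoint from $\mathrm{SL}(\Psi_\xi,s_0)$, so it is uniquely determined by $s_0$. Write $t(s)$ for this unique level; so $\mathrm{L}(\Psi_\xi,s)$ and $\mathrm{L}(\Psi_\eta,t(s))$ are tangent, while $\mathrm{SL}(\Psi_\xi,s)\cap\mathrm{SL}(\Psi_\eta,t(s))=\emptyset$.

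Next, apply Lemma \ref{lem:seperate} to each $\mathbf{G}_i$. At the point $X_i(s)\in\mathbf{G}_i$, the level sets $\mathrm{L}(\Psi_\xi,X_i(s))=\mathrm{L}(\Psi_\xi,s)$ and $\mathrm{L}(\Psi_\eta,X_i(s))=\mathrm{L}(\Psi_\eta,\Psi_\eta(X_i(s)))$ are tangent at $X_i(s)$, and their intersection is $\{X_i(s)\}$. By uniqueness of the tangency level we must have $\Psi_\eta(X_i(s))=t(s)$, so
\[
X_i(s)\in \mathrm{L}(\Psi_\xi,s)\cap \mathrm{L}(\Psi_\eta,t(s)),\qquad i=1,2.
\]
But applying the ``single intersection point'' assertion of Lemma \ref{lem:seperate} at $X_1(s)\in\mathbf{G}_1$ yields $\mathrm{L}(\Psi_\xi,s)\cap \mathrm{L}(\Psi_\eta,t(s))=\{X_1(s)\}$. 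Consequently $X_2(s)=X_1(s)$ for every $s\in\mathbb{R}$, so $\mathbf{G}_1$ and $\mathbf{G}_2$ share the same underlying set of points. Since each is a Teichm\"uller geodesic and the parametrization is determined by the affine identity $\Psi_\xi(\mathbf{G}_i(t))=-t+\mathrm{const}$, matching the two at any one level (and reparametrizing by an irrelevant additive shift) gives $\mathbf{G}_1=\mathbf{G}_2$ as parametrized geodesics.

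The main place where care is required is the step that upgrades Lemma \ref{lem:tangent} to a genuine uniqueness statement for $t(s)$; however this is already implicit in the ``maximal $t_0$'' definition used to prove existence, so it comes essentially for free. The real leverage of the argument is the combination of this uniqueness with Lemma \ref{lem:seperate}, which prevents multiple optimal geodesics from crossing the same horosphere $\mathrm{L}(\Psi_\xi,s)$ at different points.
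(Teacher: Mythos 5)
Your argument is correct and follows essentially the same route as the paper: for each level $s$, combine the uniqueness of the tangency level $t(s)$ with Lemma \ref{lem:seperate} to force both optimal geodesics to cross the horosphere $\mathrm{L}(\Psi_\xi,s)$ at the same point, hence to coincide. The one step you treat lightly---uniqueness of the tangency level---is likewise only asserted in the paper's proof, and it follows quickly: moving slightly forward along an optimal geodesic from a tangency point produces points lying in both open horoballs, contradicting tangency at any larger level of $\Psi_\eta$.
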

	\begin{proof}
		
		Assume that both $\mathbf{G}_1$ and $\mathbf{G}_2$ are optimal geodesics for $\xi$ and $\eta$. 
		Then, using the proof of Theorem \ref{thm_01}  and Lemma \ref{lem:seperate}, we know that
		for any fixed $s \in \mathbb{R}$, there exists a unique $t \in \mathbb{R}$ such that level sets $\mathrm{L}(\Psi_\xi,s)$ and $\mathrm{L}(\Psi_\eta,t)$ are tangent to each other at some $X_1\in \mathbf{G}_1$
		and at some $X_2\in \mathbf{G}_2$. 
		
		By Lemma \ref{lem:seperate}, we have $X_1=X_2$. Again, the assumption that
		$\mathrm{L}(\Psi_\xi,s)$ and $\mathrm{L}(\Psi_\eta,t)$ are tangent to each other 
		implies $\mathbf{G}_1=\mathbf{G}_2$.
		
	\end{proof}

	Theorem \ref{thm:horofunction} follows from Theorem \ref{thm_01}
	and Theorem \ref{thm:unique}.

	\section{Convergence of geodesics}
	
	In this section, we prove Theorem \ref{thm:convergence}. 
	Fix a basis-point $X_0\in \TS$.
	For simplicity, we denote $d=d_{\mathcal{T}}$.
	
	Let  $\xi, \eta$ be a pair of Gardiner-Masur boundary points that  fill up the surface. In particular, $i(\xi,\eta)\neq 0$. 
	Consider any two sequences $X_n,Y_n\in\TS$  such that $X_n\to\xi$ and $Y_n\to\eta$.
	Denote the Teichm\"uller geodesics passing through $X_n$ and $Y_n$ by  $\mathbf{G}_n=\mathbf{G}_n(t)$.
	
	We first prove
	
	\begin{lemma}\label{lem:faster}
		The distance $d(X_n, Y_n)$ growths faster than both
		$d(X_0, X_n)$  and $d(X_0, Y_n)$.
	\end{lemma}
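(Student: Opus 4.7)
The plan is to rewrite $d(X_n,Y_n)$ in terms of Gromov products relative to $X_0$, namely
\begin{equation*}
	d(X_n, Y_n) = d(X_0, X_n) + d(X_0, Y_n) - 2\langle X_n \mid Y_n \rangle,
\end{equation*}
and to reduce the lemma to two ingredients: that $d(X_0, X_n)$ and $d(X_0, Y_n)$ both diverge, and that the Gromov products $\langle X_n \mid Y_n \rangle$ stay bounded above. Given these, $d(X_n,Y_n)-d(X_0,X_n)=d(X_0,Y_n)-2\langle X_n\mid Y_n\rangle$ and $d(X_n,Y_n)-d(X_0,Y_n)=d(X_0,X_n)-2\langle X_n\mid Y_n\rangle$ both tend to $+\infty$, which is what ``grows faster'' means.

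The divergence $d(X_0, X_n)\to\infty$ is immediate from $\xi\in\GMT$: if some subsequence of $X_n$ stayed inside a compact subset of $\TS$ (equivalently, inside a Teichm\"uller-metric ball), we could extract a further subsequence converging to a point of $\TS$ in the usual topology, and this point would also be the Gardiner--Masur limit, contradicting $\xi\in\GMT$. The same reasoning handles $Y_n$.

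The main task, and the main obstacle, is the uniform upper bound on $\langle X_n\mid Y_n\rangle$. For this I would apply Theorem \ref{thm:Miyachi} to conclude $i(X_n,Y_n)\to i(\xi,\eta)$, and then invoke the positivity $i(\xi,\eta)\neq 0$ emphasized in the setup of the lemma. By the definition $i(X_n,Y_n) = e^{-2\langle X_n\mid Y_n\rangle}$ from \eqref{equ:im3}, convergence to a nonzero limit forces $\langle X_n\mid Y_n\rangle\to -\frac{1}{2}\log i(\xi,\eta)<\infty$, so these Gromov products are bounded by some constant $C$ for all large $n$. Assembling the pieces, $d(X_n,Y_n)-d(X_0,X_n)\geq d(X_0,Y_n)-2C\to\infty$, and symmetrically $d(X_n,Y_n)-d(X_0,Y_n)\to\infty$.

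The delicate point is justifying $i(\xi,\eta)>0$ purely from filling; for $\xi,\eta\in\PMF$ this is classical, and for general Gardiner--Masur boundary points the paper treats it as part of the filling setup. If one needed to verify it directly, a natural route would be to exploit the optimal geodesic $\mathbf{G}$ of Theorem \ref{thm:horofunction} for $\xi$ and $\eta$: picking $X_0=\mathbf{G}(0)$ and sampling $A_n=\mathbf{G}(n)$, $B_n=\mathbf{G}(-n)$ yields $\langle A_n\mid B_n\rangle_{X_0}=0$, so by Theorem \ref{thm:Miyachi} every accumulation point of $(A_n,B_n)$ in $\overline{\TS}^{\mathrm{GM}}\times\overline{\TS}^{\mathrm{GM}}$ has positive mutual intersection, and one then compares with the prescribed sequences $X_n, Y_n$ using the 1-Lipschitz horofunctions $\Psi_\xi,\Psi_\eta$ and their bounded sum along $\mathbf{G}$.
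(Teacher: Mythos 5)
Your argument is correct and is essentially the paper's proof: the identity $d(X_n,Y_n)=d(X_0,X_n)+d(X_0,Y_n)-2\langle X_n\mid Y_n\rangle$ is just Miyachi's formula $\log i(X_n,Y_n)=d(X_n,Y_n)-d(X_0,X_n)-d(X_0,Y_n)$ rearranged, and in both proofs the key input is Theorem \ref{thm:Miyachi} together with $i(\xi,\eta)>0$, your explicit observation that $d(X_0,X_n),\,d(X_0,Y_n)\to\infty$ being left implicit in the paper. Your closing aside about re-deriving $i(\xi,\eta)>0$ from the optimal geodesic is not needed (the paper, like you, takes $i(\xi,\eta)\neq 0$ as part of the filling setup) and would require care, since for non-Busemann points the optimal geodesic of Theorem \ref{thm:horofunction} need not converge to $\xi$ and $\eta$.
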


	\begin{proof}
		We recall Miyachi's intersection form (see Theorem \ref{thm:Miyachi})
		$$\log i(X,Y)= d(X,Y)- d(X_0, X)-d(X_0, Y). $$

		Since $i(\xi,\eta)>0$ and $X_n\to\xi, Y_n \to \eta$, we have

		\begin{eqnarray*}
			&& \lim_{n\to\infty}	\left(d(X_n, Y_n) -d (X_0, X_n) - d (X_0, Y_n) \right)\\
			&=& \log i (\xi,\eta)  \\
			&>& -\infty. 
		\end{eqnarray*}
		
		As a result, there is a constant $c$ such that, for $n$ sufficiently large,

		\begin{eqnarray*}
			d(X_n, Y_n) -d (X_0, X_n) \geq  d (X_0, Y_n) + c, \\
		\end{eqnarray*}
		and, 
		\begin{eqnarray*}
			d(X_n, Y_n) -d (X_0, Y_n) \geq  d (X_0, X_n) + c.\\
		\end{eqnarray*}
		
		Thus, we have
		\begin{eqnarray*}  && \lim_{n\to\infty} \left(d(X_n, Y_n) -d (X_0, X_n) \right) \\
			&=& \lim_{n\to\infty} \left(d(X_n, Y_n) -d (X_0, Y_n) \right)  \\
			&=& \infty
		\end{eqnarray*}
	\end{proof}

	\begin{remark}
		We can conjecture that if $\xi$ and $\eta$ fill up a subsurface of $S$,
		then the distance $d(X_n, Y_n)$ growths faster than 
		$d(X_0, X_n)$ and $d(X_0, Y_n)$. See \cite[Page 545]{DF} for the conjecture when 
		$\xi$ and $\eta$ are simple closed curves with distance $2$ in the complex of curves. 
	\end{remark}

	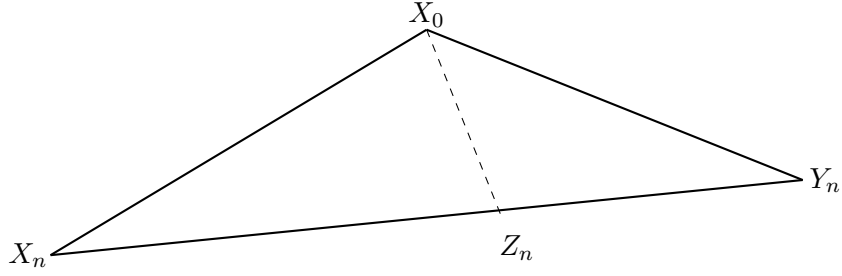
\begin{figure}[htb]
		\begin{tikzpicture}	
			\draw[thick] (-5,0) -- (5,1);
			\draw[thick] (-5,0) -- (0,3);
			\draw[thick] (0,3) -- (5,1);
			\draw [dashed] (0,3) -- (1,0.5);
			\node at (-5.3,0) {$X_n$};
			\node at (0,3.2) {$X_0$};
			\node at (5.3, 1) {$Y_n$};
			\node at (1.2, 0.1) {$Z_n$};
		\end{tikzpicture}\caption{The sequence of geodesics passing through $X_n$
			and $Y_n$.} 
		\label{fig_02}
	\end{figure}

	With Lemma \ref{lem:faster}, for $n$ sufficiently large, we can take some $Z_n \in \mathbf{G}_n $
	such that $d(X_n, Z_n)= d(X_0, X_n)$. 
	We claim 
	\begin{lemma}\label{lem:compact}
		There is some compact set $\mathcal{K}\subset \TS$ that contains all 
		$Z_n$. 
	\end{lemma}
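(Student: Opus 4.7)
The plan is to argue by contradiction, exploiting the fact that the Teichm\"uller metric is proper on $\TS$: since closed metric balls are compact, it is enough to show that $d(X_{0}, Z_{n})$ stays bounded. Suppose instead that, after passing to a subsequence, $K_{n}:=e^{2d(X_{0},Z_{n})}\to\infty$. The goal is to extract from the sequence $\{Z_n\}$ a nonzero measured foliation $F\in\MF$ satisfying $i(\xi,F)=i(\eta,F)=0$; this contradicts the hypothesis that $\xi$ and $\eta$ fill up the surface.

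To build $F$, let $q_{n}\in Q^{1}(X_{0})$ be the initial quadratic differential of the Teichm\"uller map $X_{0}\to Z_{n}$, and let $F_{n}$ be the vertical foliation of $q_{n}$, rescaled so that $\Ext_{X_{0}}(F_{n})=1$. Along the Teichm\"uller geodesic from $X_{0}$ to $Z_{n}$, the standard identity $\Ext_{R(t)}(F_{v})=e^{-2t}\Ext_{X_{0}}(F_{v})$ gives the crucial sharp relation
\[
\Ext_{Z_{n}}(F_{n}) \;=\; \frac{1}{K_{n}}.
\]
Because the unit sphere $\{\mu\in\MF:\Ext_{X_{0}}(\mu)=1\}$ is compact (homeomorphic to $\PMF$), after passing to a further subsequence $F_{n}\to F\in\MF$ with $\Ext_{X_{0}}(F)=1$; in particular $F\neq 0$.

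Now apply Kerckhoff's formula $\Ext_{X}(\mu)\le e^{2d(X,Y)}\Ext_{Y}(\mu)$ twice. Taking $X=X_{n}$, $Y=Z_{n}$, $\mu=F_{n}$ and using $d(X_{n},Z_{n})=d(X_{0},X_{n})$, the identity $\Ext_{Z_{n}}(F_{n})=1/K_{n}$ yields
\[
i(X_{n},F_{n})^{2} \;=\; \frac{\Ext_{X_{n}}(F_{n})}{K(X_{0},X_{n})} \;\le\; \frac{1}{K_{n}} \;\longrightarrow\; 0.
\]
By the continuity of Miyachi's intersection number (Theorem \ref{thm:Miyachi}), applied to $X_{n}\to\xi$ and $F_{n}\to F$ (viewing $\MF$ inside the Gardiner-Masur boundary), this forces $i(\xi,F)=0$. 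The analogous computation with $Y_{n}$ in place of $X_{n}$, using $d(Y_{n},Z_{n})=d(X_{n},Y_{n})-d(X_{0},X_{n})$ together with the identity $i(X,Y)^{2}=K(X,Y)/\bigl(K(X_{0},X)K(X_{0},Y)\bigr)$, produces
\[
i(Y_{n},F_{n})^{2} \;\le\; \frac{i(X_{n},Y_{n})^{2}}{K_{n}} \;\longrightarrow\; 0,
\]
because $i(X_{n},Y_{n})\to i(\xi,\eta)<\infty$. Hence $i(\eta,F)=0$ as well, so $F\in\MF\setminus\{0\}$ violates the filling condition for $\xi$ and $\eta$, giving the desired contradiction.

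The only delicate input is the identity $\Ext_{Z_{n}}(F_{n})=1/K_{n}$ along the Teichm\"uller geodesic from $X_{0}$, which sharpens Kerckhoff's inequality enough to force the two vanishings $i(\xi,F)=0$ and $i(\eta,F)=0$ simultaneously; the remainder is routine manipulation of Miyachi's formula $\log i(X,Y)=d(X,Y)-d(X_{0},X)-d(X_{0},Y)$.
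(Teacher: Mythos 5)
Your proof is correct, and it reaches the contradiction by a genuinely different route than the paper. The paper works with the points $Z_n$ themselves: assuming $d(X_0,Z_n)\to\infty$, it passes to a subsequential limit $\lambda\in\GMT$ of $Z_n$, uses Miyachi's formula together with $d(X_n,Z_n)=d(X_0,X_n)$ and the additivity $d(Y_n,Z_n)=d(X_n,Y_n)-d(X_n,Z_n)$ to get $\log i(X_n,Z_n)=-d(X_0,Z_n)$ and $\log i(Y_n,Z_n)=\log i(X_n,Y_n)-d(X_0,Z_n)$, and concludes $i(\xi,\lambda)=i(\eta,\lambda)=0$; since the filling hypothesis is stated only against $\mu\in\MF\setminus\{0\}$, closing the paper's argument implicitly uses the appendix material (Lemma \ref{lem:app3}, Proposition \ref{lem:positive}) to transfer filling to an arbitrary boundary point $\lambda$. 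You instead extract the limit object from the \emph{directions} toward $Z_n$: the normalized vertical foliations $F_n$ of the initial differentials of the Teichm\"uller maps $X_0\to Z_n$ subconverge to some $F\in\MF\setminus\{0\}$, and the exact decay $\Ext_{Z_n}(F_n)=e^{-2d(X_0,Z_n)}$ plus Kerckhoff's inequality give $i(X_n,F_n)^2\le 1/K_n$ and $i(Y_n,F_n)^2\le i(X_n,Y_n)^2/K_n$, which are just multiplicative forms of the paper's two identities. Letting $n\to\infty$ then yields $i(\xi,F)=i(\eta,F)=0$, contradicting the filling condition directly as stated on $\MF$. What your version buys is that the contradiction lands on a measured foliation, so no extension of the filling property to general boundary points is needed; the cost is the extra input of quadratic differentials, the extremal-length decay along a ray, and Kerckhoff's inequality, whereas the paper stays entirely inside the extended intersection number formalism. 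Two small points of hygiene: the step $i(X_n,F_n)\to i(\xi,F)$ (and likewise for $\eta$) is most cleanly justified by Remark \ref{remark:one} (locally uniform convergence of $i(X_n,\cdot)$ to $i(\xi,\cdot)$ on $\MF$, applied on the compact set $\{\Ext_{X_0}=1\}$) rather than by invoking the full boundary continuity of Theorem \ref{thm:Miyachi}; and your identity $d(Y_n,Z_n)=d(X_n,Y_n)-d(X_0,X_n)$ uses that $Z_n$ lies on the segment $\overline{X_n Y_n}$, which is exactly what Lemma \ref{lem:faster} guarantees for large $n$ and is worth saying explicitly.
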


	\begin{proof}
		It suffices to check that $d(X_0, Z_n)$ is uniformly bounded. On the contrary, suppose that $Z_n$ diverges in the Teichm\"uller space. Then, up to a subsequence, we may assume that $Z_n\to \lambda$, where $\lambda$ is some boundary point. 
		
		By our construction, 
		
		\begin{eqnarray*}
			\log i(X_n,Z_n) &=& d(X_n,Z_n)- d(X_0, X_n)-d(X_0, Z_n) \\
			&=& -d(X_0, Z_n).
		\end{eqnarray*}
		
		By the assumption, 	$d(X_0, Z_n) \to \infty$. This implies that 
		
		$$i(\xi, \lambda)= \lim_{n\to\infty} i(X_n,Z_n) = 0.$$

		Similarly, 
		
		\begin{eqnarray*}
			&&	\log i(Y_n,Z_n) -	\log i(X_n,Y_n)  \\
			&=& d(Y_n,Z_n)- d(X_0, Y_n)-d(X_0, Z_n) \\
			&& - \left( d(X_n,Y_n)- d(X_0, X_n) - d(X_0, Y_n) \right) \\
			&=&	 - d(X_n, Z_n)- d(X_0, Z_n) + d(X_0, X_n) \\
			&= & - d(X_0, Z_n).
		\end{eqnarray*}
		That is,
		
		\begin{eqnarray*}
			\log i(Y_n,Z_n) 
			&=& - d(X_0, Z_n) + 	\log i(X_n,Y_n).
		\end{eqnarray*}
		Since $	\log i(X_n,Y_n)$ is uniformly bounded and $d(X_0, Z_n) \to \infty$,   we have 
		$$i(\eta, \lambda)= \lim_{n\to\infty} i(Y_n,Z_n) = 0.$$

		We conclude that 
		$i(\xi,\lambda)+ i(\eta,\lambda)=0,$
		which contradicts the condition that $\xi$ and $\eta$ fill up the surface. 
	\end{proof}

		We have a quantitative version of Lemma \ref{lem:compact}.

	\begin{lemma}\label{lem:quantitative}
		There is a constant $R>0$ depending on $\xi$ and $\eta$
		such that $d(X_0, Z_n) < R$ when $n$ is sufficiently large. 
	\end{lemma}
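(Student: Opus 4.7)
The plan is to argue by contradiction, upgrading the qualitative precompactness of Lemma \ref{lem:compact} to a uniform bound via a diagonal extraction. The key point is that the computation used there only sees the asymptotic data of the sequences through the Miyachi intersection pairing with their limits $\xi$ and $\eta$, so the same argument applied to a worst-case sequence will again yield a contradiction with the filling condition.

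Suppose no such $R$ exists. Then for every $k\in\mathbb{N}$ one can choose sequences $X_n^{(k)}\to\xi$ and $Y_n^{(k)}\to\eta$ together with an index $n_k$ such that the corresponding $Z_{n_k}^{(k)}$ on the geodesic through $X_{n_k}^{(k)}$ and $Y_{n_k}^{(k)}$ satisfies $d(X_0,Z_{n_k}^{(k)})\geq k$; by choosing $n_k$ large enough I may also arrange that, in any fixed metric inducing the Gardiner--Masur topology, $X_{n_k}^{(k)}$ and $Y_{n_k}^{(k)}$ lie within $1/k$ of $\xi$ and $\eta$. Writing $\widetilde{X}_k:=X_{n_k}^{(k)}$, $\widetilde{Y}_k:=Y_{n_k}^{(k)}$, and $\widetilde{Z}_k:=Z_{n_k}^{(k)}$, we obtain a single pair of sequences with $\widetilde{X}_k\to\xi$, $\widetilde{Y}_k\to\eta$, and $d(X_0,\widetilde{Z}_k)\to\infty$. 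Passing to a further subsequence, $\widetilde{Z}_k$ converges in the compact Gardiner--Masur compactification to some $\lambda$, which must lie in $\GMT$ since $d(X_0,\widetilde{Z}_k)\to\infty$.

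I would then repeat the computation of Lemma \ref{lem:compact} verbatim. From $d(\widetilde{X}_k,\widetilde{Z}_k)=d(X_0,\widetilde{X}_k)$ and Theorem \ref{thm:Miyachi},
\[
\log i(\widetilde{X}_k,\widetilde{Z}_k) = -d(X_0,\widetilde{Z}_k) \longrightarrow -\infty,
\]
so $i(\xi,\lambda)=0$; similarly, using $d(\widetilde{Y}_k,\widetilde{Z}_k)=d(\widetilde{X}_k,\widetilde{Y}_k)-d(X_0,\widetilde{X}_k)$,
\[
\log i(\widetilde{Y}_k,\widetilde{Z}_k) = \log i(\widetilde{X}_k,\widetilde{Y}_k) - d(X_0,\widetilde{Z}_k) \longrightarrow -\infty,
\]
since $\log i(\widetilde{X}_k,\widetilde{Y}_k)\to\log i(\xi,\eta)\in\mathbb{R}$ is finite (using $i(\xi,\eta)\neq 0$). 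Hence $i(\xi,\lambda)+i(\eta,\lambda)=0$, contradicting the filling hypothesis exactly as at the end of Lemma \ref{lem:compact}.

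The main obstacle is essentially just bookkeeping for the diagonal extraction---verifying that the negation of the uniform bound really does furnish sequences enjoying all three conditions (closeness of $\widetilde{X}_k$ to $\xi$, closeness of $\widetilde{Y}_k$ to $\eta$, and divergence of $d(X_0,\widetilde{Z}_k)$) simultaneously. Once such sequences are in hand, both the computation and the contradiction are a direct transcription of Lemma \ref{lem:compact}.
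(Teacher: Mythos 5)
Your proof is correct, but it takes a different route from the paper's. You argue by contradiction: a diagonal choice of indices turns the failure of a uniform bound into a single pair of sequences $\widetilde X_k\to\xi$, $\widetilde Y_k\to\eta$ with $d(X_0,\widetilde Z_k)\to\infty$, and then the computation of Lemma \ref{lem:compact} applies verbatim to produce a boundary accumulation point $\lambda$ with $i(\xi,\lambda)=i(\eta,\lambda)=0$; note that the final contradiction, here as in Lemma \ref{lem:compact}, tacitly uses that the filling condition (stated only for $\mu\in\mathcal{MF}$) forces $i(\xi,\lambda)+i(\eta,\lambda)>0$ for an arbitrary Gardiner--Masur boundary point $\lambda$, which is exactly what Lemma \ref{lem:app3} and Proposition \ref{lem:positive} of the appendix supply, so you are borrowing that ingredient through your citation of Lemma \ref{lem:compact}. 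The paper instead avoids any contradiction or diagonal extraction and derives an explicit bound: from $e^{-d(X_0,Z_n)}=i(X_n,Z_n)=i(Y_n,Z_n)/i(X_n,Y_n)$ and $i(X_n,Y_n)\to i(\xi,\eta)>0$ it gets $C\,e^{-d(X_0,Z_n)}\geq i(X_n,Z_n)+i(Y_n,Z_n)$ for large $n$, and then invokes the uniform positivity $\delta=\inf_\lambda\bigl(i(\xi,\lambda)+i(\eta,\lambda)\bigr)>0$ (Proposition \ref{lem:positive}, which rests on compactness of the boundary) to conclude $\liminf_n e^{-d(X_0,Z_n)}\geq\delta/C$, hence an explicit radius $R=2\log(C/\delta)$. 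Your approach is shorter at this point and only needs positivity at the single limit point $\lambda$, but it is non-constructive and shifts the compactness of the Gardiner--Masur boundary from the appendix proposition to a sequential-compactness/diagonal argument; the paper's version isolates the quantitative ingredient $\delta>0$ and produces a concrete constant depending only on $i(\xi,\eta)$ and $\delta$, which makes the dependence of the compact set in Corollary \ref{thm:sticky} on $(\xi,\eta)$ explicit. Your bookkeeping concern is the only delicate step and it does go through: for each $k$ the bad sequences violate the bound $k$ infinitely often, so $n_k$ can be chosen simultaneously large enough for the $1/k$-closeness (the compactification is metrizable), for Lemma \ref{lem:faster} to make $\widetilde Z_k$ well defined on the segment, and for $d(X_0,\widetilde Z_k)\geq k$.
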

	
	\begin{proof}
		This follows by modifying the proof of Lemma \ref{lem:compact}.
		As we have shown, 
		
		\begin{eqnarray*}
			-d(X_0, Z_n) &=& \log i(X_n, Z_n) \\
			&=& \log i(Y_n,Z_n) - \log i(X_n,Y_n).
		\end{eqnarray*}
		
		We write the above equation as 
		
		\begin{eqnarray*}
			e^{-d(X_0, Z_n)}  &=& i(X_n, Z_n) \\
			&=&  \frac{i(Y_n,Z_n)}{i(X_n,Y_n)}.
		\end{eqnarray*}	
		
		Since $i(X_n, Y_n) \to i(\xi,\eta)$ and $i(\xi, \eta)>0$,
		there is a constant $C=C(\xi,\eta)$ such that 
		for $n$ sufficiently large, 
		\begin{eqnarray}\label{equ: intersection}
			C	e^{-d(X_0, Z_n)}  &\geq &  i(X_n, Z_n) + i(Y_n,Z_n).
		\end{eqnarray}		
		
		Denote by 
		$$\delta = \inf_{\lambda} \  i(\xi, \lambda) + i(\eta, \lambda),$$	
		where the infimum is taken over all $\lambda$ in the Gardiner-Masur boundary. Then $\delta>0$. See Lemma \ref{lem:positive} in the Appendix.

		Using \eqref{equ: intersection},  we have  $$ \liminf\limits_{n\to \infty}	e^{-d(X_0, Z_n)} \geq \delta/C >0.$$

		Now we take $\mathcal{K}$ to be the geodesic ball centered at $X_0$ and of radius 
		$R= 2 \log \frac{C}{\delta}$.  Then  for any sequences $X_n \to \xi$ and $Y_n \to \eta$, the points $Z_n\in \overline{X_n Y_n}$ lie in
		$\mathcal{K}$ as $n$ is sufficiently large.  
	\end{proof}

	\begin{proof}[Proof of Theorem \ref{thm:convergence}]
		
		It follows from Lemma \ref{lem:compact} that, up to a subsequence, 
		we may assume that $Z_n$ converges to some $Z$ in $\TS$ and 
		$\mathbf{G}_n \to \mathbf{G}$. 	
		We proceed to show that $\mathbf{G}$ (up to orientation) is an optimal geodesic for 
		$\xi$ and $\eta$. 
		
		We take $Z$ to be the basis-point of horofunction compactification. 
		Note that, for fix $t$, 
		the horofunction corresponding to $\xi$ satisfies 
		
		\begin{eqnarray*}
			\Psi_\xi(\mathbf{G}_n(t)) &=& \lim_{n\to\infty} d(X_n, \mathbf{G}_n(t))-
			d(X_n,Z_n) \\
			&=& -t.
		\end{eqnarray*}
		
		By taking the limit, we have 
		
		\begin{eqnarray*}
			\Psi_\xi(\mathbf{G}(t))= -t.
		\end{eqnarray*}
		
		The same argument shows that 
		
		\begin{eqnarray*}
			\Psi_\eta(\mathbf{G}(t))= t.
		\end{eqnarray*}
		
		As a result, $\mathbf{G}$ is an optimal geodesic for 
		$\xi$ and $\eta$. Since optimal geodesic is unique (by Theorem \ref{thm:unique}),  $\mathbf{G}$ is exactly the limit of the whole sequence $\mathbf{G}_n$. This finish the proof. 
		
	\end{proof}

As a direct corollary of Lemma \ref{lem:quantitative} or Theorem \ref{thm:convergence},  we show that Teichm\"uller geodesics satisfy the following ``sticky" property.

	\begin{corollary}\label{thm:sticky}
	Let $\mathbf{G}$ be a Teichm\"uller geodesic, and let $\xi$ and $\eta$ be the endpoints of $\mathbf{G}$  in the Gardiner-Masur boundary. There is a compact set 
	$\mathcal{K}\subset \TS$ such that, for any sequences $X_n \to \xi$ and $Y_n \to \eta$, the geodesic segments $\overline{X_n Y_n}$ connecting $X_n$ and $Y_n$ intersect $\mathcal{K}$ for sufficiently large $n$.
	\end{corollary}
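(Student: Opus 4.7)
The plan is to deduce Corollary \ref{thm:sticky} directly from Lemma \ref{lem:quantitative} after one preliminary check. First I would verify that the hypothesis of the corollary forces $\xi$ and $\eta$ to fill up the surface. Indeed, pick any $X \in \mathbf{G}$, and let $q \in Q^1(X)$ be the quadratic differential directing $\mathbf{G}$, with vertical and horizontal foliations $F_v, F_h$. Then $\xi = \lim_{t\to+\infty} \mathbf{G}(t)$ and $\eta = \lim_{t\to-\infty} \mathbf{G}(t)$ are the Busemann points associated to $q$ and $-q$ respectively, and by Theorem \ref{Walsh2012Corollary1.2} their intersection functions $i(\xi,\cdot)$ and $i(\eta,\cdot)$ are built out of the ergodic components of $F_v$ and $F_h$. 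Since $F_v, F_h$ fill up the surface, $i(\xi,\mu) + i(\eta,\mu) > 0$ for every $\mu \in \mathcal{MF}\setminus\{0\}$, so $\xi$ and $\eta$ fill up the surface in the sense required by Section 6.

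Next, fix a basepoint $X_0 \in \TS$. Given any sequences $X_n \to \xi$ and $Y_n \to \eta$, Lemma \ref{lem:faster} gives $d(X_n, Y_n) - d(X_0, X_n) \to \infty$, so for all $n$ large enough there is a well-defined point $Z_n$ on the geodesic segment $\overline{X_n Y_n}$ with $d(X_n, Z_n) = d(X_0, X_n)$. This is precisely the construction underlying Lemma \ref{lem:quantitative}. Applying that lemma yields a constant $R = R(\xi,\eta) > 0$ such that $d(X_0, Z_n) < R$ for $n$ sufficiently large. Taking $\mathcal{K}$ to be the closed $d_{\mathcal{T}}$-ball of radius $R$ centered at $X_0$ (which is compact since the Teichm\"uller metric is proper) gives $Z_n \in \mathcal{K} \cap \overline{X_n Y_n}$ for all large $n$, which is exactly the sticky property.

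Alternatively, one can reach the same conclusion from Theorem \ref{thm:convergence}: by uniqueness of the optimal geodesic, the sequence $\mathbf{G}_n$ through $X_n$ and $Y_n$ converges to $\mathbf{G}$, so any compact piece of $\mathbf{G}$ is approximated by compact pieces of $\mathbf{G}_n$, and one takes $\mathcal{K}$ to be a single point of $\mathbf{G}$ thickened slightly; but deriving a quantitative radius is cleaner through Lemma \ref{lem:quantitative}. There is no real obstacle here: the hard work is packaged into Lemma \ref{lem:quantitative}, where the filling hypothesis entered through the strict positivity $\delta = \inf_\lambda (i(\xi,\lambda)+i(\eta,\lambda)) > 0$. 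The only things to verify independently are the filling property of the endpoints of $\mathbf{G}$ and the fact that $Z_n$ lies on the segment rather than its extension, both of which are immediate from the discussion above.
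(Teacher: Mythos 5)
Your proposal is correct and follows essentially the same route as the paper, which states the corollary as a direct consequence of Lemma \ref{lem:quantitative}: one takes $\mathcal{K}$ to be the closed ball of radius $R$ about $X_0$ and uses the points $Z_n\in\overline{X_n Y_n}$ produced there. Your preliminary verification via Theorem \ref{Walsh2012Corollary1.2} that the endpoints of $\mathbf{G}$ fill up the surface (so that the hypotheses of Section 6 apply) is a detail the paper leaves implicit, and it is a worthwhile addition, but it does not change the argument.
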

	
	The original conjecture of 
Duchin and Fisher \cite[Conjecture 3]{DF} is that,
if $\alpha$ and $\beta$ are simple closed curves that fill up the surface, and $\mathbf{G}$ is a Teichm\"uller geodesic ending at $\alpha$ and $\beta$, then  $\mathbf{G}$ is \textit{sticky} in the topology of Thurston  compactification: there is a compact set 
$\mathcal{K}\subset \TS$ such that, for any sequences $X_n \to \alpha$ and $Y_n \to \beta$ in the Thurston compactification, the geodesic segments $\overline{X_n Y_n}$ connecting $X_n$ and $Y_n$ intersect $\mathcal{K}$ for sufficiently large $n$.

 It seems  that $X_n \to \alpha$ in the Thurston compactification
  	doesn't imply that $X_n \to \alpha$ in the Gardiner-Masur compactification. 
It would be interesting to find a counterexample.

\appendix

	\section{Lemmas on intersection number}\label{sec:appendix}

	We first give a proof of  Lemma \ref{prop_low-bound}.
For any $X$ and $Y$ in $\TS$, denote by $K(X,Y)$
the quasiconformal dilation of the Teichm\"uller extremal map between $X$ and $Y$.

	\begin{lemma}\label{lem:app1}
	Let $X_n \in \mathcal{T}(S)$ converge to a boundary point $\xi \in \partial_{\mathrm{GM}}\mathcal{T}(S)$. Assume that $q_n \in {Q^1(X_0)}$ is the initial quadratic differential associated to the Teichm\"uller map from $X_0$ to $X_n$. Assume that the vertical measured foliations ${F}_{v}(q_n)$ of $q_n$ converge to a measured foliation
	${F}$. Then 
	$
	i({F},\mu) \leq i(\xi,\mu)
	$
	for all $\mu \in \mathcal{MF}$.	
	\end{lemma}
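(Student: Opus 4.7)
The plan is to reduce the inequality to a single application of Minsky's extremal length inequality on $X_n$, combined with the normalization of the intersection number recalled in Remark \ref{remark:one}.

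Set $K_n := K(X_0,X_n)$. The Teichm\"uller map from $X_0$ to $X_n$ with initial differential $q_n \in Q^1(X_0)$ stretches horizontal distances of $q_n$ by $K_n^{1/2}$ and contracts vertical distances by the same factor. Pushing $q_n$ forward gives a terminal differential $q_n^\ast \in Q^1(X_n)$, and tracking what this does to the transverse measure of the vertical foliation---equivalently, comparing the extremal lengths of $F_v(q_n)$ on the two surfaces (related by the factor $e^{-2t_n}=K_n^{-1}$ along the geodesic) with that of $F_v(q_n^\ast)$ on $X_n$---yields the identification
$$F_v(q_n^\ast) = K_n^{1/2}\, F_v(q_n) \quad \text{in } \MF.$$

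With $q_n^\ast$ a unit-area holomorphic quadratic differential on $X_n$, Minsky's inequality gives, for every $\mu \in \MF$,
$$\mathrm{Ext}_{X_n}(\mu) \;\geq\; i\bigl(F_v(q_n^\ast),\mu\bigr)^{2} \;=\; K_n \cdot i\bigl(F_v(q_n),\mu\bigr)^{2}.$$
Dividing by $K_n$ and taking square roots,
$$\sqrt{\mathrm{Ext}_{X_n}(\mu)/K_n} \;\geq\; i\bigl(F_v(q_n),\mu\bigr).$$
By Remark \ref{remark:one}, the left-hand side is exactly $i(X_n,\mu)$ and converges to $i(\xi,\mu)$ uniformly on compact subsets of $\MF$; meanwhile, continuity of the geometric intersection on $\MF$ together with the hypothesis $F_v(q_n) \to F$ makes the right-hand side converge to $i(F,\mu)$. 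Passing to the limit gives $i(\xi,\mu) \geq i(F,\mu)$ for every $\mu\in\MF$, as required.

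The only delicate point is the convention bookkeeping in the first step: one must correctly identify the scaling factor $K_n^{1/2}$ (rather than $K_n^{-1/2}$) relating $F_v(q_n^\ast)$ to $F_v(q_n)$, since it is precisely this factor that cancels against the normalization by $K_n^{1/2}$ implicit in $i(X_n,\mu)$ to produce a nontrivial limit. Once this is pinned down, the argument is a one-line application of Minsky's inequality followed by a continuity limit, and I do not foresee a further substantive obstacle.
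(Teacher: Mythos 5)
Your proposal is correct and is essentially the paper's argument: both hinge on Minsky's inequality applied on $X_n$ to the vertical foliation together with the dilatation factor $K_n=K(X_0,X_n)$, followed by the limit $i(X_n,\mu)\to i(\xi,\mu)$ from Remark \ref{remark:one} and continuity of $i(\cdot,\cdot)$ on $\MF$. Your bookkeeping via the terminal differential $q_n^\ast$ with $F_v(q_n^\ast)=K_n^{1/2}F_v(q_n)$ is just a rephrasing of the paper's identity $\operatorname{Ext}_{X_n}(F_v(q_n))=\operatorname{Ext}_{X_0}(F_v(q_n))/K_n$, and the scaling factor you single out is indeed the right one.
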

	\begin{proof}
		Recall the following inequality, due to Minsky \cite[Lemma 5.1]{Minsky1993}: 
	$$i(\nu, \mu)\leq \sqrt{\operatorname{Ext}_{X}(\nu)}
	\sqrt{\operatorname{Ext}_{X}(\mu)}$$
	for any $\mu, \nu\in \MF$. Taking $\nu_n ={F}_{v}(q_n)$
	and note that $\operatorname{Ext}_{X_n}(\nu_n)= \operatorname{Ext}_{X_0}(\nu_n)/ K(X_0, X_n)$, we have
	
	$$i(\nu_n, \mu)\leq \sqrt{\operatorname{Ext}_{X_0}(\nu_n)} \cdot \frac{\sqrt{\operatorname{Ext}_{X_n}(\mu)}}{\sqrt{K(X_0, X_n)}} .
	$$
	
Since $\sqrt{\operatorname{Ext}_{X_0}(\nu_n)}=1 $, as $X_n \to \xi$, we have
$\nu_n \to F$ and 
$$\frac{\sqrt{\operatorname{Ext}_{X_n}(\mu)}}{\sqrt{K(X_0, X_n)}} \to i(\xi, \mu),$$
which shows that 	$i({F},\mu) \leq i(\xi,\mu)$.
	
	\end{proof}

	The next lemma is a generalization of Lemma \ref{lem:app1}. 
	
		\begin{lemma}\label{lem:app2}
		Let $X_n \in \mathcal{T}(S)$ converge to a boundary point $\xi \in \partial_{\mathrm{GM}}\mathcal{T}(S)$. Assume that $q_n \in {Q^1(X_0)}$ is the initial quadratic differential associated to the Teichm\"uller map from $X_0$ to $X_n$. Assume that the vertical measured foliations ${F}_{v}(q_n)$ of $q_n$ converge to a measured foliation
		${F}$. Then 
		$
		i({F},Y) \leq i(\xi,Y)
		$
		for all $Y\in \TS$.	
	\end{lemma}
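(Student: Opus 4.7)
The plan is to deduce this from Lemma \ref{lem:app1} (the measured-foliation version) by recognizing both $i(F, Y)$ and $i(\xi, Y)$ as exponentials of horofunctions, shifted by a common constant, and then invoking the sharp Gardiner--Masur identity together with Theorem \ref{thm:LS}.

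First, I would extract the identity
$$\log i(\zeta, Y) = \Psi_\zeta(Y) - d_{\mathcal{T}}(X_0, Y) \qquad (\zeta \in \GMT,\; Y \in \TS)$$
by passing to the limit in
$$\log i(X_n, Y) = d_{\mathcal{T}}(X_n, Y) - d_{\mathcal{T}}(X_0, X_n) - d_{\mathcal{T}}(X_0, Y)$$
along any sequence $X_n \to \zeta$, using the continuous extension of the intersection form from Theorem \ref{thm:Miyachi}. Applied with $\zeta = \xi$ and (after normalizing $F$ by $\Ext_{X_0}(F) = 1$, so that $F$ represents a point of $\PMF \subset \GMT$) with $\zeta = F$, this reduces the desired inequality $i(F, Y) \le i(\xi, Y)$ to the horofunction inequality $\Psi_F(Y) \le \Psi_\xi(Y)$.

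Second, I would establish this horofunction inequality by a direct sup-comparison: Lemma \ref{lem:app1} gives $i(F, \mu) \le i(\xi, \mu)$ for every $\mu \in \MF$, and dividing by $\sqrt{\Ext_Y(\mu)}$ and taking supremum over $\mu \in \MF\setminus\{0\}$ produces $\sqrt{\Ext_Y(F)}$ on the left (by the sharp Gardiner--Masur identity $\sqrt{\Ext_Y(F)} = \sup_\mu i(F,\mu)/\sqrt{\Ext_Y(\mu)}$) and $e^{\Psi_\xi(Y)}$ on the right (by Theorem \ref{thm:LS} in the Remark \ref{remark:one} normalization). Taking logarithms gives exactly $\Psi_F(Y) \le \Psi_\xi(Y)$, which is Lemma \ref{lem_up_bound}.

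The only subtle point is the bookkeeping of normalizations: Theorem \ref{Miyachi2008Teichm} pins down $i(\xi, \cdot)$ only up to a positive constant, and the constant in Lemma \ref{lem:app1} is the one fixed by Remark \ref{remark:one}, namely $\sup_\mu i(\xi, \mu)/\sqrt{\Ext_{X_0}(\mu)} = 1$. This is precisely the normalization that makes the second term in $\Psi_\xi$ equal to zero, and it pairs with $\Ext_{X_0}(F) = 1$ so that the Gardiner--Masur identity applies symmetrically on both sides of the supremum. With these normalizations chosen consistently, Lemma \ref{lem:app2} is a direct reformulation of Lemma \ref{lem:app1} at interior points of $\TS$ via the horofunction calculus, and no further work is needed.
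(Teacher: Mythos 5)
Your argument is correct, but it takes a genuinely different route from the paper's. The paper proves Lemma \ref{lem:app2} by a direct extremal-length computation: writing $i(\nu_n,Y)=\sqrt{\Ext_Y(\nu_n)/K(X_0,Y)}$ for $\nu_n=F_v(q_n)$, using the quasiconformal distortion bound $\Ext_Y(\nu_n)\le K(X_n,Y)\,\Ext_{X_n}(\nu_n)$ together with $\Ext_{X_n}(\nu_n)=1/K(X_0,X_n)$, it obtains $i(\nu_n,Y)\le i(X_n,Y)$ for each $n$ and then passes to the limit via Theorem \ref{thm:Miyachi}. You instead encode both sides as horofunctions through the identity $\log i(\zeta,Y)=\Psi_\zeta(Y)-d_{\mathcal{T}}(X_0,Y)$ (obtained from Miyachi's continuity and Theorem \ref{thm:LS}), and reduce the lemma to $\Psi_F\le\Psi_\xi$, i.e. to Lemma \ref{lem_up_bound}; since Lemma \ref{lem_up_bound} is proved from Proposition \ref{prop_low-bound} (which is Lemma \ref{lem:app1}) and not from Lemma \ref{lem:app2}, there is no circularity. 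What each approach buys: the paper's proof is elementary and self-contained in the appendix (only the Gr\"otzsch-type distortion of extremal length), and it feeds directly into the boundary version Lemma \ref{lem:app3} by the same continuity; yours makes the conceptual content transparent --- the lemma is exactly the interior-point reformulation of the horofunction comparison $\Psi_F\le\Psi_\xi$ --- at the cost of heavier inputs (Miyachi's extension, Liu--Su, the Gardiner--Masur identity $\sup_{\mu} i(F,\mu)/\sqrt{\Ext_Y(\mu)}=\sqrt{\Ext_Y(F)}$) and the normalization bookkeeping you describe. One small point there: the normalization $\Ext_{X_0}(F)=1$ is not a choice you are free to make but is automatic, since $\Ext_{X_0}(F_v(q_n))=\|q_n\|=1$ and extremal length is continuous; this matters because Lemma \ref{lem:app1} concerns this specific $F$, and rescaling it upward would not be legitimate. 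With that observed, your normalization claims (vanishing of the base-point term under the Remark \ref{remark:one} normalization, and $e^{\Psi_F(Y)}=\sqrt{\Ext_Y(F)}$) are exactly the ones the paper itself uses in Lemma \ref{lem_up_bound}, and your proof goes through.
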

	\begin{proof}
	By definition, 	for $\nu\in \MF$ and $Y\in \TS$, 
	$$	i(\nu,Y) =\frac{\sqrt{\operatorname{Ext}_{Y}(\nu)}}{\sqrt{K(X_0, Y)}}. $$	
	If follows from a fundamental property of quasiconformal mapping that
	$$
	\operatorname{Ext}_{Y}(\nu) \leq K(X_n, Y) \operatorname{Ext}_{X_n}(\nu).
	$$	
	
	As before, denote by $\nu_n = F_v(q_n)$. Combining the above two equations, we have 
	
	\begin{eqnarray*}
		i(\nu_n, Y ) &=&  \frac{\sqrt{\operatorname{Ext}_{Y}(\nu_n)}}{\sqrt{K(X_0, Y)}}\\
		 &\leq&  \frac{\sqrt{K(X_n, Y)}}{\sqrt{K(X_0, Y)}} \sqrt{\operatorname{Ext}_{X_n}(\nu_n)}  \\		
		  &=&  \frac{\sqrt{K(X_n, Y)}}{\sqrt{K(X_0, Y)}} \frac{1}{\sqrt{K(X_0,X_n)}}\\		
		  &=& i(X_n, Y).	 		
	\end{eqnarray*}	
By letting $X_n \to \xi$ and then $\nu_n \to F$, we have 

$$i(F, Y) \leq i(\xi, Y).$$		
		
	\end{proof}
	
	By continuity, Lemma \ref{lem:app2} implies 
	
		\begin{lemma}\label{lem:app3}
		Let $X_n \in \mathcal{T}(S)$ converge to a boundary point $\xi \in \partial_{\mathrm{GM}}\mathcal{T}(S)$. Assume that $q_n \in {Q^1(X_0)}$ is the initial quadratic differential associated to the Teichm\"uller map from $X_0$ to $X_n$. Assume that the vertical measured foliations ${F}_{v}(q_n)$ of $q_n$ converge to a measured foliation
		${F}$. Then 
		$
		i({F},\lambda) \leq i(\xi,\lambda)
		$
		for all $\lambda$ in the Gardiner-Masur boundary.	
	\end{lemma}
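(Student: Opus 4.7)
My plan is to deduce Lemma \ref{lem:app3} from Lemma \ref{lem:app2} by a direct continuity argument using Miyachi's extension of the intersection form to the Gardiner-Masur compactification (Theorem \ref{thm:Miyachi}).

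Fix $\lambda$ in the Gardiner-Masur boundary. Since $\TS$ is dense in its Gardiner-Masur compactification, I can choose a sequence $Y_m \in \TS$ with $Y_m \to \lambda$ as $m \to \infty$. Lemma \ref{lem:app2}, applied to the fixed data $X_n, q_n, F, \xi$ and to each interior point $Y_m$, yields the inequality
\[
i(F, Y_m) \leq i(\xi, Y_m)
\]
for every $m$. Here both sides are instances of Miyachi's intersection form, once we view $F$ (respectively $\xi$) as a point of the Gardiner-Masur compactification.

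Now invoke Theorem \ref{thm:Miyachi}: the intersection number extends continuously to the Gardiner-Masur compactification in each variable. Applying it to $Y_m \to \lambda$ (with the first argument held fixed at $F$, and then at $\xi$) gives $i(F, Y_m) \to i(F, \lambda)$ and $i(\xi, Y_m) \to i(\xi, \lambda)$. Passing to the limit in the inequality above yields $i(F, \lambda) \leq i(\xi, \lambda)$, which is the desired conclusion.

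There is essentially no obstacle here: the only point to verify is that $F$, a measured foliation, is represented as a Gardiner-Masur boundary point in a way compatible with the continuous intersection form, which is the content of the identification $\mathcal{PMF} \subset \partial_{\mathrm{GM}}\mathcal{T}(S)$ already recalled after Theorem \ref{thm:Miyachi} in the paper. Thus the lemma follows immediately by continuity, as the statement indicates.
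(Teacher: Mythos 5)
Your argument is correct and is exactly what the paper intends: the paper's entire proof of Lemma \ref{lem:app3} is the one-line remark that it follows from Lemma \ref{lem:app2} ``by continuity,'' and you have simply filled in that continuity argument by approximating $\lambda$ with interior points $Y_m$ and passing to the limit via Theorem \ref{thm:Miyachi}.
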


As a corollary of Lemma \ref{lem:app3}, we prove 

\begin{proposition}\label{lem:positive}
Let $\xi$ and $\eta$ be a pair of points in the Gardiner-Masur boundary 
that fill up the surface. Set 
$$\delta(\xi,\eta) = \inf_\lambda \  i(\xi,\lambda)+ i (\eta, \lambda)$$
where the infimum is taken over all $\lambda$ in the Gardiner-Masur boundary. Then 
$$\delta(\xi,\eta)>0.$$
\end{proposition}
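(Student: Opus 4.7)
\textit{Proof proposal.} The plan is to combine compactness of the Gardiner-Masur compactification with Lemma \ref{lem:app3} in order to reduce a vanishing infimum to a violation of the filling hypothesis. First I would note that the Gardiner-Masur boundary $\GMT$ is compact (being closed in the compact Gardiner-Masur compactification), and that by Theorem \ref{thm:Miyachi} the function
$$ \lambda \ \longmapsto \ i(\xi,\lambda) + i(\eta,\lambda) $$
is continuous on $\GMT$. Hence the infimum defining $\delta(\xi,\eta)$ is attained at some $\lambda_0 \in \GMT$. Suppose for contradiction that $\delta(\xi,\eta) = 0$; then $i(\xi,\lambda_0) = i(\eta,\lambda_0) = 0$.

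Next I would recover a measured foliation from $\lambda_0$. Since $\lambda_0$ is a boundary point, choose $X_n \in \TS$ with $X_n \to \lambda_0$, and let $q_n \in Q^1(X_0)$ be the initial quadratic differential of the Teichm\"uller map from $X_0$ to $X_n$. The unit sphere $Q^1(X_0)$ is compact, so after passing to a subsequence we may assume $q_n \to q \in Q^1(X_0)$. In particular $F_v(q_n) \to F := F_v(q)$ in $\MF$, and $F \neq 0$ because $i(F_v(q), F_h(q)) = \|q\| = 1$.

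Now I would apply Lemma \ref{lem:app3} with $X_n \to \lambda_0$ to both boundary points $\xi$ and $\eta$, obtaining
$$ i(F,\xi) \ \leq \ i(\lambda_0,\xi) \ = \ 0, \qquad i(F,\eta) \ \leq \ i(\lambda_0,\eta) \ = \ 0. $$
Viewing $F$ (normalized so that $\Ext_{X_0}(F) = 1$) as an element of $\PMF \subset \GMT$, and using the identification from Theorem \ref{thm:Miyachi} together with the remark that the Miyachi intersection restricted to $\PMF$ recovers the functional representation of Theorem \ref{Miyachi2008Teichm}, we deduce $i(\xi,F) = i(\eta,F) = 0$ as functionals on $\MF$. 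Since $F \in \MF \setminus \{0\}$, this contradicts the hypothesis that $\xi$ and $\eta$ fill up the surface.

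The main obstacle I expect is the bookkeeping around normalizations: the intersection $i(F,\xi)$ produced by Lemma \ref{lem:app3} is Miyachi's extended intersection form, whereas the filling condition is phrased in terms of the homogeneous functional of Theorem \ref{Miyachi2008Teichm}. Verifying that the two vanish simultaneously uses the explicit embedding $\PMF \hookrightarrow \GMT$ via the normalization $\Ext_{X_0}(\cdot) = 1$, together with the continuous extension of $i(\cdot,\cdot)$ guaranteed by Theorem \ref{thm:Miyachi}; a convenient route is to approach $F$ from the interior via any sequence $Y_k \to F$ in $\GMT$ and invoke equation \eqref{equ:im2} of Remark \ref{remark:one} to identify the two quantities. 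The rest of the argument is a straightforward compactness-plus-continuity reduction.
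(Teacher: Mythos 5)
Your proposal is correct and follows essentially the same route as the paper's own proof: attain the infimum at some $\lambda_0$ by compactness, use Lemma \ref{lem:app3} (applied to a sequence $X_n\to\lambda_0$) to replace $\lambda_0$ by a nonzero measured foliation $F$, and invoke the filling hypothesis. Your extra care in checking $F\neq 0$ via $\|q\|=1$ and in matching Miyachi's intersection form with the homogeneous functional is a welcome refinement of details the paper leaves implicit, but it does not change the argument.
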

\begin{proof}
		Since the Gardiner-Masur boundary is compact,  there is some boundary point $\lambda_0$ such that
	$$\delta(\xi,\eta) = i(\xi,\lambda_0) + i (\eta, \lambda_0).$$
	By Lemma \ref{lem:app3}, there is a measured foliation ${F}_0$, associated with $\lambda_0$, such that
	$i(\xi, \lambda_0) \geq i(\xi, F_0)$ and 
	$i(\eta, \lambda_0) \geq i(\eta, F_0)$. As a result, the condition that $\xi$ and $\eta$ fill up the surface implies $$\delta(\xi,\eta) \geq i(\xi,F_0) + i (\eta, F_0)>0.$$
\end{proof}

\bibliographystyle{plain}

\bibliography{bibliography}

\begin{thebibliography}{10}

\bibitem{ABEM}
Jayadev Athreya, Alexander Bufetov, Alex Eskin, and Maryam Mirzakhani.
\newblock Lattice point asymptotics and volume growth on {T}eichm\"{u}ller
  space.
\newblock {\em Duke Math. J.}, 161(6):1055--1111, 2012.

\bibitem{Azemar2021qualitative}
Aitor Azemar.
\newblock A qualitative description of the horoboundary of the
  {T}eichm\"{u}ller metric.
\newblock {\em arXiv preprint arXiv:2108.11698}, 2021.

\bibitem{Bers}
Lipman Bers.
\newblock An extremal problem for quasiconformal mappings and a theorem by
  {T}hurston.
\newblock {\em Acta Math.}, 141(1-2):73--98, 1978.

\bibitem{BE}
Lipman Bers and Leon Ehrenpreis.
\newblock Holomorphic convexity of {T}eichm\"{u}ller spaces.
\newblock {\em Bull. Amer. Math. Soc.}, 70:761--764, 1964.

\bibitem{DF}
Moon Duchin and Nate Fisher.
\newblock Stars at infinity in {T}eichm\"{u}ller space.
\newblock {\em Geom. Dedicata}, 213:531--545, 2021.

\bibitem{Earle1977the}
Clifford~J. Earle.
\newblock The {T}eichm\"{u}ller distance is differentiable.
\newblock {\em Duke Math. J.}, 44(2):389--397, 1977.

\bibitem{EK}
Clifford~J. Earle and Irwin Kra.
\newblock On isometries between {T}eichm\"{u}ller spaces.
\newblock {\em Duke Math. J.}, 41:583--591, 1974.

\bibitem{FM2012}
Benson Farb and Dan Margalit.
\newblock {\em A primer on mapping class groups}, volume~49 of {\em Princeton
  Mathematical Series}.
\newblock Princeton University Press, Princeton, NJ, 2012.

\bibitem{fathi1979travaux}
Albert Fathi, Fran\c{c}ois Laudenbach, and Valentin Po\'{e}naru.
\newblock {\em Thurston's work on surfaces}, volume~48 of {\em Mathematical
  Notes}.
\newblock Princeton University Press, Princeton, NJ, 2012.
\newblock Translated from the 1979 French original by Djun M. Kim and Dan
  Margalit.

\bibitem{gardiner1991extremal}
Frederick~P. Gardiner and Howard Masur.
\newblock Extremal length geometry of {T}eichm\"{u}ller space.
\newblock {\em Complex Variables Theory Appl.}, 16(2-3):209--237, 1991.

\bibitem{Gouezel}
S\'{e}bastien Gou\"{e}zel.
\newblock Subadditive cocycles and horofunctions.
\newblock In {\em Proceedings of the {I}nternational {C}ongress of
  {M}athematicians---{R}io de {J}aneiro 2018. {V}ol. {III}. {I}nvited
  lectures}, pages 1933--1947. World Sci. Publ., Hackensack, NJ, 2018.

\bibitem{Gromov81}
M.~Gromov.
\newblock Hyperbolic manifolds, groups and actions.
\newblock In {\em Riemann surfaces and related topics: {P}roceedings of the
  1978 {S}tony {B}rook {C}onference ({S}tate {U}niv. {N}ew {Y}ork, {S}tony
  {B}rook, {N}.{Y}., 1978)}, volume~97 of {\em Ann. of Math. Stud}, pages pp
  183--213. Princeton Univ. Press, Princeton, N.J., 1981.

\bibitem{KM}
Ilya Kapovich and Nadia Benakli.
\newblock Boundaries of hyperbolic groups.
\newblock {\em arXiv preprint math/0202286}, 2002.

\bibitem{kerckhoff1980asymptotic}
Steven~P. Kerckhoff.
\newblock The asymptotic geometry of {T}eichm\"{u}ller space.
\newblock {\em Topology}, 19(1):23--41, 1980.

\bibitem{lenzhen2010criteria}
Anna Lenzhen and Howard Masur.
\newblock Criteria for the divergence of pairs of {T}eichm\"{u}ller geodesics.
\newblock {\em Geom. Dedicata}, 144:191--210, 2010.

\bibitem{LiuShi}
Lixin Liu and Yaozhong Shi.
\newblock On the properties of various compactifications of {T}eichm\"{u}ller
  space.
\newblock {\em Monatsh. Math.}, 198(2):371--391, 2022.

\bibitem{liu2010horofunction}
Lixin Liu and Weixu Su.
\newblock The horofunction compactification of the {T}eichm\"{u}ller metric.
\newblock In {\em Handbook of {T}eichm\"{u}ller theory. {V}ol. {IV}}, volume~19
  of {\em IRMA Lect. Math. Theor. Phys.}, pages 355--374. Eur. Math. Soc.,
  Z\"{u}rich, 2014.

\bibitem{markovic}
Vladimir Markovic.
\newblock Carath\'{e}odory's metrics on {T}eichm\"{u}ller spaces and
  {$L$}-shaped pillowcases.
\newblock {\em Duke Math. J.}, 167(3):497--535, 2018.

\bibitem{Masur1975On}
Howard Masur.
\newblock On a class of geodesics in {T}eichm\"{u}ller space.
\newblock {\em Ann. of Math. (2)}, 102(2):205--221, 1975.

\bibitem{MM1999}
Howard~A. Masur and Yair~N. Minsky.
\newblock Geometry of the complex of curves. {I}. {H}yperbolicity.
\newblock {\em Invent. Math.}, 138(1):103--149, 1999.

\bibitem{Masur1994Teichmuller}
Howard~A. Masur and Michael Wolf.
\newblock Teichm\"{u}ller space is not {G}romov hyperbolic.
\newblock {\em Ann. Acad. Sci. Fenn. Ser. A I Math.}, 20(2):259--267, 1995.

\bibitem{Minsky1993}
Yair~N. Minsky.
\newblock Teichm\"{u}ller geodesics and ends of hyperbolic {$3$}-manifolds.
\newblock {\em Topology}, 32(3):625--647, 1993.

\bibitem{Minsky}
Yair~N. Minsky.
\newblock Extremal length estimates and product regions in {T}eichm\"{u}ller
  space.
\newblock {\em Duke Math. J.}, 83(2):249--286, 1996.

\bibitem{Miyachi2008Teichm}
Hideki Miyachi.
\newblock Teichm\"{u}ller rays and the {G}ardiner-{M}asur boundary of
  {T}eichm\"{u}ller space.
\newblock {\em Geom. Dedicata}, 137:113--141, 2008.

\bibitem{miyachi2013teichmuller}
Hideki Miyachi.
\newblock Teichm\"{u}ller rays and the {G}ardiner-{M}asur boundary of
  {T}eichm\"{u}ller space {II}.
\newblock {\em Geom. Dedicata}, 162:283--304, 2013.

\bibitem{miyachi2014extremal}
Hideki Miyachi.
\newblock Extremal length boundary of the {T}eichm\"{u}ller space contains
  non-{B}usemann points.
\newblock {\em Trans. Amer. Math. Soc.}, 366(10):5409--5430, 2014.

\bibitem{Miyachi2014}
Hideki Miyachi.
\newblock Unification of extremal length geometry on {T}eichm\"{u}ller space
  via intersection number.
\newblock {\em Math. Z.}, 278(3-4):1065--1095, 2014.

\bibitem{Rafi}
Kasra Rafi.
\newblock Hyperbolicity in {T}eichm\"{u}ller space.
\newblock {\em Geom. Topol.}, 18(5):3025--3053, 2014.

\bibitem{Royden}
H.~L. Royden.
\newblock Automorphisms and isometries of {T}eichm\"{u}ller space.
\newblock In {\em Proceedings of the {R}omanian-{F}innish {S}eminar on
  {T}eichm\"{u}ller {S}paces and {Q}uasiconformal {M}appings ({B}ra\c{s}ov,
  1969)}, pages pp 273--286 (errata insert). Publ. House of the Acad. of the
  Socialist Republic of Romania, Bucharest, 1971.

\bibitem{Su2018Horospheres}
Weixu Su and Dong Tan.
\newblock Horospheres in {T}eichm\"{u}ller space and mapping class group.
\newblock {\em arXiv preprint arXiv:1801.01812, accepted for publication in
  Annales de l'Institut Fourier}, 2022.

\bibitem{thurston1988geometry}
William~P. Thurston.
\newblock On the geometry and dynamics of diffeomorphisms of surfaces.
\newblock In {\em Collected works of {W}illiam {P}. {T}hurston with commentary.
  {V}ol. {I}. {F}oliations, surfaces and differential geometry}, pages
  495--509. Amer. Math. Soc., Providence, RI, 2022.

\bibitem{Walsh}
Cormac Walsh.
\newblock The horofunction boundary and isometry group of the {H}ilbert
  geometry.
\newblock In {\em Handbook of {H}ilbert geometry}, volume~22 of {\em IRMA Lect.
  Math. Theor. Phys.}, pages 127--146. Eur. Math. Soc., Z\"{u}rich, 2014.

\bibitem{Walsh2019The}
Cormac Walsh.
\newblock The asymptotic geometry of the {T}eichm\"{u}ller metric.
\newblock {\em Geom. Dedicata}, 200:115--152, 2019.

\end{thebibliography}
	
\end{document}